\numberwithin{equation}{section}
\newcommand{\C}{{\mathbf C}}
\newcommand{\Z}{{\mathbf Z}}
\newcommand{\B}{{\mathbf{B}}}
\newcommand{\A}{{\mathbf A}}
\newcommand{\F}{{\mathbf F}}
\newcommand{\V}{{\mathbf V}}
\newcommand{\gl}{{\mathfrak{gl}}}
\newcommand{\q}{{\mathfrak{q}}}
\newcommand{\seteq}{\mathbin{:=}}
\newcommand{\uqqn}{U_q(\mathfrak{q}(n))}
\newcommand{\readw}{\operatorname{read}}
\newcommand{\sh}{\operatorname{sh}}
\theoremstyle{plain}
\newtheorem{lemma}{Lemma}[section]
\newtheorem{prop}[lemma]{Proposition}
\newtheorem{theorem}[lemma]{Theorem}
\newcommand{\Prop}{\begin{prop}}
\newcommand{\enprop}{\end{prop}}
\newcommand{\Lemma}{\begin{lemma}}
\newcommand{\enlemma}{\end{lemma}}
\newcommand{\Th}{\begin{theorem}}
\newcommand{\enth}{\end{theorem}}
\newtheorem{corollary}[lemma]{Corollary}
\newcommand{\Cor}{\begin{corollary}}
\newcommand{\encor}{\end{corollary}}
\newtheorem{definition}[lemma]{Definition}
\newcommand{\Def}{\begin{definition}}
\newcommand{\edf}{\end{definition}}
\newcommand{\ten}{10}
\newcommand{\eleven}{11}
\newcommand{\twelve}{12}
\newcommand{\thirteen}{13}
\theoremstyle{definition}
\newtheorem{remark}[lemma]{Remark}
\newtheorem{example}[lemma]{Example}
\newcommand{\g}{{\mathfrak{g}}}
\newcommand{\Uq}{{U_q(\mathfrak{q}(n))}}
\newcommand{\qn}{{\mathfrak{q}(n)}}
\newcommand{\isoto}[1][]{\mathop{\xrightarrow[#1]%
{{\raisebox{-.6ex}[0ex][-.6ex]{$\mspace{2mu}\sim\mspace{2mu}$}}}}}
\newcommand{\tensor}{\otimes}
\newcommand{\eq}{\begin{eqnarray}}
\newcommand{\eneq}{\end{eqnarray}}
\newcommand{\eqn}{\begin{eqnarray*}}
\newcommand{\eneqn}{\end{eqnarray*}}
\newcommand{\on}{\operatorname}
\newcommand{\bni}{\be[{\rm(i)}]}
\newcommand{\bna}{\be[{\rm(a)}]}
\newcommand{\QED}{\end{proof}}
\newcommand{\Proof}{\begin{proof}}
\newcommand{\soplus}{\mathop{\mbox{\normalsize$\bigoplus$}}\limits}
\newcommand{\cl}{\colon}
\newcommand{\ba}{\begin{array}}
\newcommand{\ea}{\end{array}}
\newcommand{\bi}{\begin{enumerate}[{\rm(i)}]}
\newcommand{\set}[2]{\left\{#1 \mathbin{;} #2 \right\}}
\newcommand{\hs}{\hspace*}
\newcommand{\eqsub}{\begin{subequations}\begin{eqnarray}}
\newcommand{\eneqsub}{\end{eqnarray}\end{subequations}}
\newcommand{\ol}{\overline}
\newcommand{\nc}{\newcommand}
\nc{\la}{\lambda}
\nc{\lam}{\lambda}
\nc{\U}[1][\g]{U_q(#1)}
\nc{\te}{\tilde{e}}
\nc{\tei}{\tilde{e}_i}
\nc{\tf}{\tilde{f}}
\nc{\tfi}{\tilde{f}_i}
\nc{\tU}{\widetilde U_q(\g)}
\nc{\tE}{\tilde{E}}
\nc{\tF}{\tilde{F}}
\nc{\tk}{\tilde{k}}
\nc{\tkone}{\tk_{\ol{1}}}
\nc{\teone}{\tilde{e}_{\ol{1}}}
\nc{\tfone}{\tilde{f}_{\ol{1}}}
\nc{\teibar}{\tilde{e}_{\ol{i}}} \nc{\tfibar}{\tilde{f}_{\ol{i}}}
\nc{\tki}{{\tk}_{\ol {i}}}
\nc{\BZ}{{\mathbb{Z}}}
\nc{\al}{\alpha}
\nc{\qs}{{q}}
\nc{\lan}{\langle}
\nc{\ran}{\rangle}
\nc{\re}{{\mathrm{re}}}
\nc{\wt}{\operatorname{wt}}
\nc{\ch}{\operatorname{ch}}
\nc{\Uf}[1][\g]{U^-_q(#1)}
\nc{\Ue}{U^+_q(\g)}
\nc{\eps}{\varepsilon}
\nc{\vphi}{\varphi}
\nc{\sphi}{\varphi^*}
\nc{\seps}{\varepsilon^*}
\nc{\nn}{\nonumber}
\def\max{{\mathop{\mathrm{max}}}}
\nc{\vp}{\varpi}
\nc{\cls}{{\operatorname{cl}}}
\nc{\Wt}{{\operatorname{Wt}}}
\nc{\Us}{U'_q(\g)}
\nc{\La}{\Lambda}
\nc{\ro}{{\rm(}}
\nc{\rf}{{\rm)}}
\nc{\norm}{{\mathrm{norm}}}
\nc{\qbox}{\quad\mbox}
\nc{\braid}{{\mathfrak{B}}}
\nc{\Ad}{\operatorname{Ad}}
\nc{\Aut}{\operatorname{Aut}}
\nc{\dt}[1]{\tilde{\tilde #1}}
\nc{\Sn}{S^{{\mathrm{norm}}}}
\nc{\aff}{{\mathrm{aff}}}
\nc{\rk}{{\mathrm{rk}}}
\nc{\tQ}{\widetilde{Q}}
\nc{\tP}{\widetilde{P}}
\nc{\tW}{\widetilde{W}}
\nc{\Dyn}{\mathrm{Dyn}}
\nc{\tD}{\widetilde{\Delta}}
\nc{\height}{{\operatorname{ht}}}
\nc{\bl}{\bigl}
\nc{\br}{\bigr}
\nc{\Hecke}{\mathrm{H}}
\nc{\HA}{\Hecke^{\mathrm{A}}}
\nc{\HB}{\Hecke^{\mathrm{B}}}
\nc{\K}{\mathrm{K}}
\newcommand{\scbul}{{\,\raise1pt\hbox{$\scriptscriptstyle\bullet$}\,}}
\nc{\vac}{{\phi}}
\nc{\Bt}{\B_\theta(\g)}
\nc{\be}{\begin{enumerate}}
\nc{\ee}{\end{enumerate}}
\nc{\low}{{\mathrm{low}}}
\nc{\upper}{{\mathrm{up}}}
\nc{\Zodd}{\Z_{\mathrm{odd}}}
\nc{\Ft}[1][n]{\mathbb{P}\mathrm{ol}_{#1}}
\nc{\Ftf}[1][n]{\widetilde{\mathbb{P}\mathrm{ol}}_{#1}}
\nc{\KA}{\on{K}^{\mathrm{A}}}
\nc{\KB}{\on{K}^{\mathrm{B}}}
\nc{\Res}{\on{Res}}
\nc{\Fc}[1][{n,m}]{\mathbf{F}_{#1}}
\nc{\tphi}{\tilde{\varphi}}
\nc{\CO}{\mathscr{O}}
\nc{\inte}{\mathrm{int}}
\nc{\Oint}{\mathcal{O}^{\ge0}_{\inte}}
\nc{\vs}{\vspace}
\nc{\tL}{\widetilde{L}}
\nc{\noi}{\noindent}
\nc{\heigh}{\mathfrak{t}}
\nc{\lowest}{\mathfrak{l}}
\newenvironment{rouge}
{\color{red}}
{}
\nc{\bred}{\begin{rouge}}
\nc{\ered}{\end{rouge}}
\newlength{\mylength}
\title[Queer crystals and semistandard decomposition tableaux]{Crystal bases for the quantum queer superalgebra \\ and
semistandard decomposition tableaux}
\author[D. Grantcharov, J. H. Jung, S.-J. Kang, M. Kashiwara, M. Kim]{Dimitar Grantcharov$^{1}$, Ji Hye Jung$^{2}$, Seok-Jin
Kang$^{3}$, \\ Masaki Kashiwara$^{4}$, Myungho Kim$^{5}$}
\address{Department of Mathematics \\
         University of Texas at Arlington \\ Arlington, TX 76021, USA}
         \email{grandim@uta.edu}
\address{Department of Mathematical Sciences \\
         Seoul National University \\ Seoul 151-747, Korea}
         \email{jhjung@math.snu.ac.kr}
\address{Department of Mathematical Sciences
         and
         Research Institute of Mathematics \\
         Seoul National University \\ Seoul 151-747, Korea}
         \email{sjkang@math.snu.ac.kr}
\address{Research Institute for Mathematical Sciences \\
          Kyoto University \\ Kyoto 606-8502, Japan \\
          \& Department of Mathematical Sciences \\
         Seoul National University \\ Seoul 151-747, Korea}
         \email{masaki@kurims.kyoto-u.ac.jp}
\address{Department of Mathematical Sciences \\
         Seoul National University \\ Seoul 151-747, Korea}
         \email{mkim@math.snu.ac.kr}
\thanks{$^{1}$This work was partially supported by NSA grant H98230-10-1-0207 and by Research Institute for Mathematical Sciences, Kyoto University.}
\thanks{$^{2}$This work was partially supported by BK21 Mathematical Sciences Division and by NRF Grant \# 2010-0010753.}
\thanks{$^{3}$This work was partially supported by KRF Grant \# 2007-341-C00001, by  NRF Grant \# 2010-0010753 and by NRF Grant \# 2010-0019516.} %Acknowledgement is changed 2013, 14th July.
\thanks{$^{4}$This work was partially supported by Grant-in-Aid for Scientific Research (B) 23340005,
Japan Society for the Promotion of Science.}
\thanks{$^{5}$This work was partially supported by KRF Grant \# 2007-341-C00001 and
by NRF Grant \# 2010-0019516.}
\keywords{quantum queer superalgebras, crystal bases, odd Kashiwara
operators, semistandard decomposition tableaux, shifted
Littlewood-Richardson coefficients} \subjclass[2000]{17B37, 81R50}
\begin{document}

\maketitle

\begin{abstract}
In this paper, we give an explicit combinatorial realization of the
crystal $B(\la)$ for an irreducible highest weight $U_q(\qn)$-module
$V(\la)$ in terms of semistandard decomposition tableaux. We present
an insertion scheme for semistandard decomposition tableaux and give
algorithms of decomposing the tensor product of $\q(n)$-crystals.
Consequently, we obtain explicit combinatorial descriptions of the
shifted Littlewood-Richardson coefficients.
\end{abstract}

\section*{Introduction}

The queer Lie superalgebra ${\mathfrak q} (n)$ is the second super
analogue of the general linear Lie algebra $\mathfrak{gl} (n)$ and
is one of the most interesting algebraic structures studied both by
mathematicians and physicists. It has been known since its inception
that the representation theory of ${\mathfrak q} (n)$ is  rather
complicated. This is partly due to the fact that every Cartan
subalgebra  of ${\mathfrak q} (n)$ is noncommutative, and, as a
result, the highest weight space of a highest weight ${\mathfrak q}
(n)$-module has a Clifford module structure. The combinatorial
structure of the finite-dimensional $\q(n)$-modules is especially
interesting. The {\it tensor representations} of ${\mathfrak q}(n)$;
i.e., those that appear as  subrepresentations of tensor powers of
the vector representation, are involved in the queer analogue of the
celebrated {\it Schur-Weyl duality}, often referred to as the {\it
Schur-Weyl-Sergeev duality}. This duality  was obtained in
\cite{Ser} for $U (\qn)$ and in \cite{Ol} for $U_q (\qn)$.
The isomorphism %\cmt{changed}
classes of tensor representations of
${\mathfrak q}(n)$ are parametrized by the set $\Lambda^+$
of strict partitions of $n$. Another
important property of the tensor representations is that their
characters are multiples of the so-called {\it Schur's
$Q$-functions} \cite{Ser}.

The combinatorial description of the tensor modules over $U_q (\qn)$
can best be understood in the language of {\it crystal bases}.
Originally introduced in \cite{Kas90, Kas91} for integrable modules
over quantum groups associated with symmetrizable Kac-Moody
algebras, the crystal basis theory is considered nowadays as one of
the most prominent discoveries in the combinatorial representation
theory. Some of the significant features of the crystal bases
include: an extremely nice behavior with respect to tensor products
and important connections with combinatorics of Young tableaux and
Young walls (see, for instance, \cite{BKK, Ka2003, KK08, KN, MM,
N93}).
%{\tt Please expand this paragraph to better match the content of the present paper.}

The numerous combinatorial applications of the queer Lie
superalgebra together with the fact that the category  of tensor
representations is semisimple raise a natural question - is there a
crystal basis theory for this category? The answer to this question
is affirmative and the solution has been recently obtained in two
steps. First, the highest weight module theory for
$U_q(\mathfrak{q}(n))$ was developed in \cite{GJKK}. Second, the
crystal basis theory for the category $\Oint$ of tensor
representations over $U_q(\mathfrak{q}(n))$ was established in
\cite{GJKKK}. As one can expect, due to the queer nature of
$\mathfrak{q} (n)$, the definition of a crystal basis for modules in
$\Oint$ is different from the one used for all other categories of
representations studied so far. A {\it crystal basis} for a
$\Uq$-module $M$ in the category $\Oint$ is defined to be a triple
$(L,B, (l_b)_{b \in B})$, where the crystal lattice $L$ is a free
$\C[[q]]$-submodule of $M$, $B$ is a finite $\gl(n)$-crystal,
$(l_b)_{b \in B}$ is a family of nonzero vector spaces such that
$L/qL = \soplus_{b \in B} l_b$, with a set of compatibility
conditions for the action of the Kashiwara operators. The main
result in \cite{GJKKK} is the existence and uniqueness theorem for a
crystal basis of any module in the category $\Oint$. The crystal
corresponding to an irreducible highest weight module $V(\lambda)$
is denoted by $B(\lambda)$.

Once a crystal basis theory for the category $\Oint$ is established,
%it is inevitable
the next task is to look at the following two problems:
\vskip 2mm
\bna
\item to find an explicit combinatorial realization of the crystal
$B(\lambda)$,

\item to establish a Littlewood-Richardson rule for the tensor
product of crystals $B(\lambda) \otimes B(\mu)$.
\ee

\vskip 2mm \noindent The purpose of this paper is to solve these
problems. A class of combinatorial objects that describe the tensor
representations of ${\mathfrak q}(n)$ has been known for more than
thirty years - the {\it shifted semistandard Young tableaux}. These
objects have been extensively studied  by Sagan, Stembridge, Worley,
and others, leading to important and deep results (in particular,
the shifted Littlewood-Richardson rule) \cite{Sag, St, Wor}.
Unfortunately, the set of  shifted semistandard Young tableaux of
fixed shape  does not have a natural crystal structure, and for this
reason we use another setting.

Our approach is based on the notion of semistandard decomposition
tableaux, which was first introduced by Serrano \cite{Serra}. For
our purpose, we slightly change the definition used in \cite{Serra}.
In our setting, a {\it hook word} is a word $u = u_1 \cdots  u_N$
for which $u_1 \ge u_2 \ge \cdots \ge u_k <u_{k+1} <\cdots < u_N$
for some $k$. Then, a \emph{semistandard decomposition tableau} is a
filling $T$ of a shifted shape $\lambda = (\lambda_1, \ldots,
\lambda_n)$ with elements of $\{1,2,\ldots,n\}$ such that

\ \ (i) the word $v_i$ formed by reading the $i$-th row from left to
right is a hook word of length $\lambda_i$,

\ \ (ii) $v_i$ is a hook subword of maximal length in $v_{i+1} v_i$
for $ 1 \le i \le r-1$, where $r$ is the number of nonzero
$\la_i$'s.

Our first main result is that the set $\B (\lambda)$ of all
semistandard decomposition tableaux of shifted shape $\lambda$ has a
crystal structure and is isomorphic to $B(\lambda)$. This
combinatorial realization of crystals and the properties of lowest
weight vectors enable us to decompose the tensor product $\B
(\lambda) \otimes \B (\mu)$ into a disjoint union of connected
components as follows:
\begin{equation} \label{eq_decomposition}
\B(\la) \otimes \B(\mu) \simeq \soplus_{\nu \in \Lambda^+} \B(\nu)^{\oplus f^{\nu}_{\la, \mu}},
\end{equation}
where $f^{\nu}_{\la, \mu} = |LR_{\la,\mu}^{\nu}|$ and
\begin{equation*}
\ba{l}
LR_{\la,\mu}^{\nu} = \{u=u_1\cdots u_N \in \B(\la) \ ;  {\rm (a)} \ \wt(u)= w_0(\nu -\mu) \ \text{and} \\
\hs{15ex}  {\rm (b)} \ \mu + \epsilon_{n-u_N+1} +  \cdots +
\epsilon_{n-u_{k}+1} \in  \Lambda^+ \
  \text{for all} \ 1 \leq k \leq N\}.
\end{array}
\end{equation*}
We call $f^{\nu}_{\la, \mu}$ the \emph{shifted Littlewood-Richardson coefficient}.

As seen in \eqref{eq_decomposition}, the connected component
containing $T \otimes T' \in \B(\la) \otimes \B(\mu)$ is isomorphic
to $\B(\nu)$ for some $\nu$. In order to find $\nu$ and the element
$S$ of $\B(\nu)$ %\cmt{added}
corresponding to $T \otimes T'$ explicitly, we consider the {\it
insertion scheme} for semistandard decomposition tableaux. Namely,
for a semistandard decomposition tableau $T$ and a letter $x$, we
define the filling $T \leftarrow x$ and prove that $T \leftarrow x$
is a semistandard decomposition tableau %\cmt{added}
and that $S=T\leftarrow x$ and $\nu=\sh(T \leftarrow x)$
the shape of $T \leftarrow x$. From here one easily
defines $T \leftarrow T'$ for semistandard decomposition tableaux
$T$ and $T'$. Our insertion scheme is analogous to the one
introduced in \cite{Serra} and can be considered as a variation of
those used for shifted tableaux by Fomin, Haiman, Sagan,  and
Worley, \cite{Fom, Hai, Sag, Wor}. It turns out that there exists a
crystal isomorphism between the connected component containing %\cmt{of}
$T \otimes T'$ and the crystal $\B(\sh(T \leftarrow T'))$ sending $T \otimes
T'$ to $T \leftarrow T'$. A crucial part of the proof is a queer version of
the
\emph{Knuth relation}, which is a crystal isomorphism between
certain sets of four-letter words. Using this insertion scheme and
the properties of lowest weight vectors, we obtain the following
decomposition:
$$\B (\lambda) \otimes \B (\mu) \simeq \soplus_{\stackrel{T \in \B{(\la)} ;}
{T \leftarrow L^{\mu} = L^{\nu} \text{for some} \ \nu}} \B (\sh(T \leftarrow L^{\mu})).$$
Here, $L^{\nu}$ denotes a unique lowest weight vector in $\B(\nu)$.

Finally, we introduce the notion of \emph{recording tableaux} of the
insertion scheme. The recording tableaux characterize the connected
components in $\B^{\otimes N}$ %\cmt{and}
or $\B(\la) \otimes \B(\mu)$. That
is, any two elements are in the same connected component if and only
if they have the same recording tableau. For the insertion
$P(u)=(\cdots ((u_1 \leftarrow u_2) \leftarrow u_3) \cdots )
\leftarrow u_N$ of the word $u=u_1 u_2 \cdots u_N \in \B^{\otimes
N}$, the recording tableau $Q(u)$ is defined to be the {\it standard
shifted tableau} of the same shape as $P(u)$ which records the newly
added boxes with $1, 2, \ldots, N$. One can show that to each
standard shifted tableau there is a unique lowest weight vector in
$\B^{\otimes N}$. Hence the multiplicity of $\B (\lambda)$ in
$\B^{\otimes N}$ is equal to the number of standard shifted tableaux
of shape $\lambda$, which is denoted by $f^{\la}$.

On the other hand, when $u_1u_2 \cdots u_N$ is the reading word of
$T$, we define the insertion \mbox{$T \rightarrow T'$} by \mbox{$u_1
\leftarrow (u_2 \leftarrow \cdots \leftarrow (u_{N-1}\leftarrow (u_N
\leftarrow T')) \cdots )$}. Then the corresponding recording tableau
$Q=Q(T \rightarrow T')$ is defined by the following conditions: \bna
\item $Q$ is a standard shifted tableau of shape $\nu / \mu$,
where $\nu =\sh(T \rightarrow T')$ and $\mu = \sh(T')$,
\item $(n-r_{|\la|}+1) \otimes (n-r_{|\la|-1}+1) \otimes \cdots \otimes (n-r_1 +1)$ is a
semistandard decomposition tableau of shifted shape $\la$, where
$\la=\sh(T)$ and $r_k$ denotes the row of the entry $k$ in $Q$. \ee
Such a tableau $Q$ is called the \emph{shifted Littlewood-Richardson
tableau of shape $\nu / \mu$ and type $\la$} and we show that there
is a 1-1 correspondence between the set of shifted
Littlewood-Richardson tableaux of shape $\nu / \mu$ and type $\la$
and the set of lowest weight vectors of weight $w_{0}(\nu)$ in
$\B(\la) \otimes \B(\mu)$. It follows that the number of the shifted
Littlewood-Richardson tableaux of shape $\nu / \mu$ and type $\la$
 gives the shifted Littlewood-Richardson coefficient $f^{\nu}_{\la, \mu}$.

This paper is organized as follows. In Section 1, we collect some
important definitions and facts on the crystal basis theory for the
$U_q({\mathfrak q}(n))$-modules in the category $\Oint$. In Section
2, we introduce the notion of semistandard decomposition tableaux
and prove that $\B (\lambda) \simeq B(\lambda)$. Furthermore, we
give the shifted Littlewood-Richardson rule for $\B(\la) \otimes
\B(\mu)$. Section 3 is devoted to the insertion scheme and its
properties, while in the last section we present the notion of the
recording tableaux and give another description of the shifted
Littlewood-Richardson coefficients.

\medskip \noindent
{\it Acknowledgements.} We would like to thank Professor Soojin
Cho of Ajou University for many valuable and inspiring discussions
on the combinatorics of semistandard decomposition tableaux.
\vskip 3mm

\section{Crystal bases for the quantum queer superalgebra}

\subsection{The quantum queer superalgebra}\label{sec:qn}

Let $\F=\C((q))$ be the field of formal Laurent series in an
indeterminate $q$ and let $\A=\C[[q]]$ be the subring of $\F$
consisting of formal power series in $q$. For $k \in \Z_{\ge 0}$, we
define
$$[k]= \frac{q^k - q^{-k}}{q - q^{-1}}, \quad [0]!=1, \quad [k]! =
[k] [k-1] \cdots [2][1].$$

 For an integer $n \geq 2$,
let $P^{\vee} = \Z k_1 \oplus \cdots \oplus \Z k_n$ be a free
abelian group of rank $n$ and let ${\mathfrak h_{\ol 0}} = \C \otimes_{\Z}
P^{\vee}$ be the {\em even part of the Cartan subalgebra}. Define the linear functionals
$\epsilon_i \in \mathfrak{h}_{\ol 0}^*$ by $\epsilon_i(k_j) = \delta_{ij}$
$(i,j=1, \ldots, n)$ and set $P= \Z \epsilon_1 \oplus \cdots \oplus
\Z \epsilon_n$. We denote by $\alpha_i = \epsilon_i -
\epsilon_{i+1}$ the {\em simple roots} and by $h_i=k_i-k_{i+1}$
the {\em simple coroots}.

\Def The {\em quantum queer superalgebra $U_q(\mathfrak{q}(n))$} is
the superalgebra over $\F$ with $1$ generated by the symbols $e_i$,
$f_i$, $e_{\ol i}$, $f_{\ol i}$ $(i=1, \ldots, n-1)$, $q^{h}$ $(h\in
P^\vee)$, $k_{\ol j}$ $(j=1, \ldots, n)$ with the following defining
relations.
\begin{align}
\allowdisplaybreaks
\nonumber & q^{0}=1, \ \ q^{h_1} q^{h_2} = q^{h_1 + h_2} \ \ (h_1,
h_2 \in P^{\vee}), \\
\nonumber & q^h e_i q^{-h} = q^{\alpha_i(h)} e_i \ \ (h\in P^{\vee}), \displaybreak[1]\\
\nonumber & q^h f_i q^{-h} = q^{-\alpha_i(h)} f_i \ \ (h\in P^{\vee}), \displaybreak[1]\\
\nonumber & q^h k_{\ol j} = k_{\ol j} q^h, \displaybreak[1]\\
\nonumber & e_i f_j - f_j e_i = \delta_{ij} \dfrac{q^{k_i - k_{i+1}} - q^{-k_i
+ k_{i+1}}}{q-q^{-1}}, \displaybreak[1]\\
\nonumber & e_i e_j - e_j e_i = f_i f_j - f_j f_i = 0 \quad \text{if} \ |i-j|>1, \displaybreak[1]\\
\nonumber & e_i^2 e_j -(q+q^{-1}) e_i e_j e_i  + e_j e_i^2= 0  \quad \text{if} \ |i-j|=1,\displaybreak[1]\\
\nonumber & f_i^2 f_j - (q+q^{-1}) f_i f_j f_i + f_j f_i^2 = 0  \quad \text{if} \ |i-j|=1,\displaybreak[1]\\
\nonumber & k_{\ol i}^2 = \dfrac{q^{2k_i} - q^{-2k_i}}{q^2 - q^{-2}},  \displaybreak[1]\\
          & k_{\ol i} k_{\ol j} + k_{\ol j} k_{\ol i} =0 \ \ \quad \text{if} \  i \neq j, \displaybreak[1]\\
\nonumber & k_{\ol i} e_i - q e_i k_{\ol i} = e_{\ol i} q^{-k_i}, \ q k_{\ol i}e_{i-1}- e_{i-1}k_{\ol i}=-q^{-k_i} e_{\ol{i-1}}, \displaybreak[1]\\
\nonumber & k_{\ol i}e_j-e_jk_{\ol i}=0 \quad \text{if} \ j \neq i,i-1, \displaybreak[1]\\
\nonumber & k_{\ol i} f_i - q f_i k_{\ol i} = -f_{\ol i} q^{k_i}, \ q k_{\ol i} f_{i-1}-f_{i-1}k_{\ol i}=q^{k_i}f_{\ol{i-1}}, \displaybreak[1]\\
\nonumber & k_{\ol i}f_j-f_jk_{\ol i}=0 \quad \text{if} \ j \neq i, i-1, \displaybreak[1]\\
\nonumber & e_i f_{\ol j} - f_{\ol j} e_i = \delta_{ij} (k_{\ol i}
q^{-k_{i+1}} - k_{\ol{i+1}} q^{-k_i}),  \displaybreak[1]\\
\nonumber & e_{\ol i} f_j - f_j e_{\ol i} = \delta_{ij} (k_{\ol i}
q^{k_{i+1}} - k_{\ol{i+1}} q^{k_i}),  \displaybreak[1]\\
\nonumber &e_i e_{\ol i} - e_{\ol i} e_i = f_i f_{\ol i} - f_{\ol i} f_i = 0,  \displaybreak[1]\\
\nonumber &e_i e_{i+1} - q e_{i+1}e_i =
e_{\overline{i}}e_{\overline{i+1}}+ q
e_{\overline{i+1}}e_{\overline{i}},  \displaybreak[1]\\
\nonumber&q f_{i+1}f_i - f_i f_{i+1} =
f_{\overline{i}}f_{\overline{i+1}}+ q f_{\overline{i+1}}f_{\overline{i}},  \displaybreak[1]\\
\nonumber & e_i^2 e_{\overline{j}} - (q+q^{-1})e_i e_{\overline{j}}
e_i + e_{\overline{j}} e_i^2= 0 \quad \text{if} \ |i-j|=1,  \displaybreak[1]\\
\nonumber & f_i^2 f_{\overline{j}} - (q+q^{-1})f_i f_{\overline{j}}
f_i + f_{\overline{j}} f_i^2=0 \quad \text{if} \ |i-j|=1.
\end{align}
\edf
The generators $e_i$, $f_i$ $(i=1, \ldots, n-1)$, $q^{h}$ ($h\in
P^\vee$) are regarded as {\em even} and $e_{\ol i}$, $f_{\ol i}$
$(i=1, \ldots, n-1)$, $k_{\ol j}$ $(j=1, \ldots, n)$ are {\em odd}.
From the defining relations, it is easy to see that the even
generators together with $k_{\ol 1}$ generate the whole algebra
$\Uq$.

The superalgebra $U_q(\mathfrak{q}(n))$ is a bialgebra with the
comultiplication $\Delta\cl U_q(\mathfrak{q}(n)) \to
U_q(\mathfrak{q}(n)) \otimes U_q(\mathfrak{q}(n))$ defined by
\begin{equation}
\begin{aligned}
& \Delta(q^{h})  = q^{h} \otimes q^{h}\quad\text{for $h\in P^\vee$,} \\
& \Delta(e_i)  = e_i \otimes q^{-k_i + k_{i+1}} + 1 \otimes e_i \quad\text{for $i =1,\ldots n-1$,} \\
& \Delta(f_i)  = f_i \otimes 1 + q^{k_i - k_{i+1}} \otimes f_i \quad\text{for $i =1,\ldots n-1$,}\\
& \Delta(k_{\ol 1}) =k_{\ol 1}\otimes q^{k_1}+ q^{-k_1} \otimes k_{\ol 1}.
%& \Delta(e_{\ol 1}) = e_{\ol 1} \otimes q^{k_1 + k_2} + 1 \otimes
%e_{\ol 1} \\
%& \qquad \quad - (q-q^{-1}) e_1 \otimes q^{k_2} k_{\ol 1}, \\
%& \Delta(f_{\ol 1}) = f_{\ol 1} \otimes 1 + q^{-k_1 - k_2} \otimes
%f_{\ol 1} \\
%& \qquad \quad + (q-q^{-1}) q^{-k_2} k_{\ol 1} \otimes f_1.
\end{aligned}
\end{equation}

Let $U^{+}$ (respectively,\ $U^{-}$) be the subalgebra of
$U_q(\mathfrak{q}(n))$ generated by $e_i$, $e_{\ol i}$
$(i=1,\ldots, n-1)$ (respectively,\ $f_i$, $f_{\ol i}$ ($i=1, \ldots, n-1$)), and
let $U^{0}$ be the subalgebra
generated by $q^{h}$ ($h\in P^\vee$) and $k_{\ol j}$ $(j=1, \ldots, n)$.
In \cite{GJKK}, it was
shown that the algebra $U_q(\mathfrak{q}(n))$ has the {\em
triangular decomposition}:
\begin{equation}
U^{-} \otimes U^{0} \otimes U^{+}\isoto
U_q(\mathfrak{q}(n)).
\end{equation}

\subsection{The category $\Oint$.}
 Hereafter, a $U_q(\mathfrak{q}(n))$-module is understood as a
$U_q(\mathfrak{q}(n))$-supermodule.
A $U_q(\mathfrak{q}(n))$-module $M$ is called a {\em weight module}
if $M$ has a weight space decomposition $M=\soplus_{\mu \in P}
M_{\mu}$, where
$$M_{\mu}\seteq  \set{ m \in M}{q^h m = q^{\mu(h)} m \ \ \text{for all} \ h
\in P^{\vee} }.$$ The {\it set of weights of} $M$ is defined to be
$$\wt(M) = \set{\mu \in P}{M_{\mu} \neq 0 }.$$

\Def A weight module $V$ is called a {\em highest weight module
with highest weight $\la \in P$} if $V_{\la}$ is finite-dimensional and
satisfies the following conditions:
\bna
\item $V$ is generated by $V_{\la}$,
\item $e_i v = e_{\ol i} v =0$ for all $v \in V_{\la}$,
$i=1, \ldots, n-1$,
%\item $q^h v = q^{\la(h)} v$ for all $v \in V_{\la}$, $h \in P^{\vee}$.
\ee  \edf

As seen in \cite{GJKK}, there exists a unique irreducible highest weight module with highest
weight $\la \in P$ up to parity change, which will be denoted by
$V(\la)$.

Set
\begin{equation*}
\begin{aligned}
P^{\ge 0} = & \{ \la = \la_1 \epsilon_1 + \cdots + \la_n \epsilon_n
\in P\, ; \, \la_j \in \Z_{\ge 0} \ \ \text{for all} \ j=1, \ldots, n \}, \\
\La^{+} = & \{\la = \la_1 \epsilon_1 + \cdots + \la_n \epsilon_n \in
P^{\ge 0}\, ; \, \text{$\la_{i} \ge \la_{i+1}$ and $\la_{i}=\la_{i+1}$ implies}\\
 &  \hs{31ex}\text{$\la_{i} = \la_{i+1} = 0$ for all $i=1,
 \ldots,n-1$}\}.
\end{aligned}
\end{equation*}
Note that each element $\la \in \La^{+}$ corresponds to a {\em
strict partition} $\la = (\la_1 > \la_2 > \cdots > \la_r >0)$.
Thus we will often call $\la \in \La^{+}$ a strict partition.
For the same reason, we call $\lambda=(\la_1,\la_2,\ldots,\la_n) \in
P^{\geq 0}$ a {\it partition} if $\la_1 \geq \la_2 \geq \cdots
\geq \la_r > \la_{r+1}=\cdots =\la_n=0$. We denote $r$  by
$\ell(\la)$.

\begin{example}
Let $$\V = \soplus_{j=1}^n \F v_{j} \oplus \soplus_{j=1}^n \F
v_{\ol j}$$ be the vector representation of $U_q(\mathfrak{q}(n))$.
The action of $\Uq$ on $\V$ is given as follows:
\begin{equation}
\ba{llll}
e_iv_j=\delta_{j,i+1}v_i, &e_iv_{\ol j}=\delta_{j,i+1}v_{\ol i},
&f_iv_j=\delta_{j,i}v_{i+1},&f_iv_{\ol j}=\delta_{j,i}v_{\ol{i+1}}, \\[1ex]
 e_{\ol i}v_j=\delta_{j,i+1}v_{\ol{i}},&e_{\ol i}v_{\ol j}=\delta_{j,i+1}v_{i},&
f_{\ol i}v_j=\delta_{j,i}v_{\ol{i+1}},&f_{\ol i}v_{\ol j}=\delta_{j,i}v_{{i+1}}, \\[1ex]
q^h v_j=q^{\epsilon_j(h)} v_j, &q^h v_{\ol j}=q^{\epsilon_j(h)} v_{\ol j},
&k_{\ol i}v_j=\delta_{j,i}v_{\ol j},&k_{\ol i}v_{\ol j}=\delta_{j,i}v_{j}.
\ea
\end{equation}
Note that $\V$ is an irreducible highest weight module with highest weight $\epsilon_1$ and $\wt (\V) = \{ \epsilon_1,...,\epsilon_n\}$.
 \end{example}

\Def We define the category $\Oint$ to be the category of
finite-dimensional weight modules $M$ satisfying the following conditions:
\bna
\item  $\wt(M) \subset P^{\ge 0}$,
\item for any $\mu\in P^{\ge0}$ and $i \in \{1,\ldots, n\}$
 such that $\lan k_i,\mu\ran=0$,
we have $k_{\ol i}\vert_{M_\mu}=0$.
\ee
\edf

\begin{prop}[{\cite[Corollary 1.12]{GJKKK2}}] \label{cor:Vtens} \hfill %\cmt{Collorary}
\bna
\item The abelian category $\Oint$ is semisimple.
\item
Any irreducible $\uqqn$-module in
$\Oint$ appears as a direct summand of tensor products of\/ $\V$.
%\item
%Let $M$ be a $\uqqn$-module in $\Oint$. Then, for any $\la\in P^{\ge0}$
%such that $\lan k_1,\la\ran=0$, we have
%$k_{\ol 1}\vert_{M_\la}=0$.
\ee
\end{prop}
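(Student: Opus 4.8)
The plan is to reduce both assertions to a Schur--Weyl--Sergeev (double centralizer) computation and to control extensions by a duality, working one total degree at a time. First note that the total degree $|\mu|=\sum_{i=1}^n\langle k_i,\mu\rangle$ of a weight is preserved by all of the generators: $e_i,f_i,e_{\ol i},f_{\ol i}$ shift weights by $\pm\alpha_i$, which has total degree $0$, while $q^h$ and $k_{\ol j}$ fix weights. Since morphisms in $\Oint$ preserve weight spaces, $\Oint=\soplus_{N\ge0}\Oint_N$, where $\Oint_N$ is the full subcategory of modules whose weights all have total degree $N$, and it is enough to prove (a) and (b) inside a fixed $\Oint_N$, noting that $\V^{\otimes N}\in\Oint_N$.

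Second, I would classify the irreducibles of $\Oint_N$ using the highest weight theory of \cite{GJKK}. For an irreducible $M\in\Oint_N$, a dominance-maximal weight $\lambda$ of $M$ satisfies $e_iM_\lambda=e_{\ol i}M_\lambda=0$ (both $e_i$ and $e_{\ol i}$ raise weights by $\alpha_i$), and $M_\lambda$ generates $M$, so $M\simeq V(\lambda)$ up to parity. The rank-one $U_q(\mathfrak{sl}_2)$ theory for each triple $(e_i,f_i,h_i)$ gives $\lambda_i\ge\lambda_{i+1}$, and $\wt(M)\subset P^{\ge 0}$ gives $\lambda_n\ge0$, so $\lambda$ is a partition; finally the category condition (b) together with the relation $k_{\ol i}^2=\frac{q^{2k_i}-q^{-2k_i}}{q^2-q^{-2}}$ acting on $M_\lambda$ forces $\lambda_i=\lambda_{i+1}\Rightarrow\lambda_i=0$. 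Hence the irreducibles of $\Oint_N$ are exactly the $V(\lambda)$ with $\lambda\in\La^{+}$ and $|\lambda|=N$ (each of which automatically has $\ell(\lambda)\le n$).

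The core of (b) is the quantum Schur--Weyl--Sergeev duality of \cite{Ol}: the quantum Hecke--Clifford superalgebra $\mathcal H_N$ acts on $\V^{\otimes N}$ commuting with $\Uq$, and the two actions are mutual centralizers. Because $q$ is an indeterminate, hence transcendental over $\C$ in $\F=\C((q))$, the algebra $\mathcal H_N$ is split semisimple, so the double centralizer theorem makes $\V^{\otimes N}$ a \emph{semisimple} $\Uq$-module with $\V^{\otimes N}\simeq\soplus_{\lambda}V(\lambda)^{\oplus m_\lambda}$, the sum over strict partitions $\lambda$ of $N$ with $\ell(\lambda)\le n$ and $m_\lambda=\dim D^\lambda>0$ for the corresponding irreducible $\mathcal H_N$-module $D^\lambda$. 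By the previous paragraph this accounts for every irreducible of $\Oint_N$, proving (b).

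It remains to upgrade semisimplicity of $\V^{\otimes N}$ to semisimplicity of all of $\Oint_N$, which is where I expect the main difficulty: $\qn$ does have non-split extensions in its full module category, so one must use that the strict-partition condition cuts out precisely the locus on which such extensions die. I would prove $\Ext^1_{\Oint}(V(\lambda),V(\mu))=0$ for all $\lambda,\mu$. In a non-split extension $0\to V(\mu)\to E\to V(\lambda)\to 0$ the submodule $V(\mu)$ must carry the weakly larger highest weight, since otherwise $E_\lambda\simeq V(\lambda)_\lambda$ and a lift of the highest weight vector splits the sequence; thus $\Ext^1(V(\lambda),V(\mu))\ne0$ forces $\lambda\le\mu$. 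The anti-automorphism of $\Uq$ from \cite{GJKK} induces a contravariant duality on $\Oint$ fixing every $V(\lambda)$, giving $\Ext^1(V(\lambda),V(\mu))\simeq\Ext^1(V(\mu),V(\lambda))$ and hence also $\mu\le\lambda$; together these force $\lambda=\mu$. Self-extensions are excluded by the Clifford structure: the highest weight space $E_\lambda$ of a self-extension is a module over the algebra generated by the $k_{\ol i}$, which is semisimple since $k_{\ol i}^2$ acts by the nonzero scalar $\frac{q^{2\lambda_i}-q^{-2\lambda_i}}{q^2-q^{-2}}$ for $i\le\ell(\lambda)$ and $k_{\ol i}|_{E_\lambda}=0$ otherwise, so $E_\lambda$ splits as a Clifford module and the splitting propagates to $E$. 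Therefore $\Ext^1$ vanishes throughout $\Oint_N$, so $\Oint_N$, and with it $\Oint$, is semisimple.
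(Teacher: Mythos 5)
Note first that this paper does not prove Proposition \ref{cor:Vtens} at all: it is imported verbatim from \cite[Corollary 1.12]{GJKKK2}, so there is no internal proof to compare against. Measured against the literature, your route to part (b) — grading $\Oint$ by total degree, classifying the irreducibles as the $V(\la)$ with $\la\in\La^+$ via the highest weight theory of \cite{GJKK}, and invoking Olshanski's quantum Schur--Weyl--Sergeev duality \cite{Ol} to see that $\V^{\otimes N}$ is a semisimple $\Uq$-module containing every such $V(\la)$ — is the standard mechanism and is essentially sound, though your justification of strictness (``condition (b) together with $k_{\ol i}^2=\cdots$ forces $\la_i=\la_{i+1}\Rightarrow\la_i=0$'') is an assertion rather than an argument; the honest proof restricts to the $U_q(\q(2))$-subalgebra attached to $(i,i+1)$, notes that a finite-dimensional highest weight module with highest weight $(a,a)$ has $\la$ as its only weight, hence all $e_i,f_i,e_{\ol i},f_{\ol i}$ act by zero, whence $k_{\ol i}=k_{\ol{i+1}}$ on $M_\la$ and $2k_{\ol i}^2=0$, contradicting $a\neq 0$. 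That step is citable from \cite{GJKK}, so it is presentational.

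The genuine gap is in part (a), at the claim ``$\Ext^1(V(\la),V(\mu))\ne0$ forces $\la\le\mu$.'' If $\la\not\le\mu$, a lift of the highest weight space to $E_\la$ is indeed annihilated by all $e_i,e_{\ol i}$, but the submodule $N=U\cdot E_\la$ it generates need not be a complement: $N\cap V(\mu)$ is either $0$ (split) or all of $V(\mu)$, and in the latter case $E=N$ is a \emph{non-split highest weight module} with highest weight $\la$ containing $V(\mu)$ with $\mu<\la$. So your lifting argument only proves that $\la$ and $\mu$ are comparable, and the duality $\Ext^1(V(\la),V(\mu))\simeq\Ext^1(V(\mu),V(\la))$ then gives no contradiction in the comparable case (when $\mu<\la$, the conclusions ``$\mu\le\la$'' on both sides are simply true). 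Ruling out reducible highest weight modules in $\Oint$ is precisely the hard core of semisimplicity, and it cannot be dispatched by formal lifting/duality arguments: your treatment of the case $\la\neq\mu$ never uses the polynomiality conditions ($\wt(M)\subset P^{\ge0}$, condition (b), strictness), so if it were correct it would equally prove vanishing of $\Ext^1$ between distinct simples in the full category of finite-dimensional weight $\q(n)$-modules — which is false, that category having nontrivial blocks. Only your self-extension argument genuinely uses the category's defining conditions (and it is correct: there the bad case $E=U\cdot C$ is excluded by comparing $\la$-weight spaces). To close the gap you need a substantive input, e.g.\ proving that every highest weight module in $\Oint$ whose highest weight space is an irreducible Clifford module is irreducible, or showing that every module in $\Oint_N$ embeds into a direct sum of copies of $\V^{\otimes N}$ (a submodule of a semisimple module being semisimple), or a deformation argument from Sergeev's classical semisimplicity; some such ingredient is what \cite{GJKKK2} supplies and your proposal is missing.
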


\subsection{Crystal bases in $\Oint$}

Let $M$ be a $U_q(\mathfrak{q}(n))$-module in
$\Oint$. For $i=1, 2, \ldots, n-1$,
%we define the {\em even Kashiwara operators} on $M$ in the usual way. That is,
and for a weight vector $u \in M_{\la}$,
consider the  {\em $i$-string
decomposition} of $u$:
$$u =\sum_{k\ge 0} f_i^{(k)} u_k,$$
where $e_i u_k =0$ for all $k \ge 0$, $f_i^{(k)} = f_i^{k} / [k]!$.
%$[k]=\dfrac{q^k - q^{-k}}{q - q^{-1}}$, $[k]! = [k][k-1] \cdots [2][1]$,
We define the {\em even Kashiwara operators} $\tei$,
$\tfi$ $(i=1, \ldots, n-1)$ by
\begin{equation}
\begin{aligned}
& \tei u = \sum_{k \ge 1} f_i^{(k-1)} u_k, \\
& \tfi u = \sum_{k \ge 0} f_i^{(k+1)} u_k.
\end{aligned}
\end{equation}
On the other hand, we define the {\em odd Kashiwara operators}
$\tilde{k}_{\ol {1}}$, $\tilde{e}_{\ol {1}}$, $\tilde{f}_{\ol
{1}}$ by
\begin{equation}
\begin{aligned}
\tkone & = q^{k_1-1}k_{\ol 1}, \\
\teone & = - (e_1 k_{\ol 1} - q k_{\ol 1} e_1) q^{k_1 -1}, \\
\tfone & = - (k_{\ol 1} f_1 - q f_1 k_{\ol 1}) q^{k_2-1}.
\end{aligned}
\end{equation}

Recall that an abstract $\mathfrak{gl}(n)$-crystal is a set $B$
together with the maps
$\tei, \tfi\cl B \to B \sqcup \{0\}$, $\vphi_i,
\eps_i \cl B \to \Z \sqcup \{-\infty\}$ $(i=1, \ldots, n-1)$, and $\wt\cl
B \to P$ satisfying the conditions given in \cite{Kas93}.
We say that
an abstract $\mathfrak{gl}(n)$-crystal is a {\em $\mathfrak{gl}(n)$-crystal}
if it is realized as a crystal basis of a finite-dimensional
integrable $U_q(\mathfrak{gl}(n))$-module.
In particular,
we have
$$\eps_i(b)=\max\{n\in\Z_{\ge0}\,;\,\tei^nb\not=0\}
, \ \vphi_i(b)=\max\{n\in\Z_{\ge0}\,;\,\tfi^nb\not=0\}$$
for any $b$
in a $\mathfrak{gl}(n)$-crystal $B$.

\Def Let $M= \soplus_{\mu \in P^{\ge 0}} M_{\mu}$ be a
$U_q(\mathfrak{q}(n))$-module in the category
$\Oint$. A {\em crystal basis} of $M$ is a
triple $(L, B, l_{B}=(l_{b})_{b\in B})$, where
\bna
\item $L$ is a free $\A$-submodule of $M$ such that

\bni
\item $\F \otimes_{\A} L \isoto M$,

\item $L = \soplus_{\mu \in P^{\ge 0}} L_{\mu}$, where $L_{\mu} = L
\cap M_{\mu}$,

\item  $L$ is stable under the Kashiwara operators $\tei$,
$\tfi$ $(i=1, \ldots, n-1)$, $\tkone$, $\teone$, $\tfone$.
\end{enumerate}

\item $B$ is a finite $\mathfrak{gl}(n)$-crystal together with
the maps $\teone, \tfone \cl B \to B \sqcup \{0\}$ such that

\bni
\item $\wt(\teone b) = \wt(b) + \alpha_1$, $\wt(\tfone b) = \wt(b) -
\alpha_1$,

\item for all $b, b' \in B$, $\tfone b = b'$ if and only if $b = \teone b'$.
\end{enumerate}

\item $l_{B}=(l_{b})_{b \in B}$ is a family of non-zero $\C$-vector spaces
such that

\bni
\item $l_{b} \subset (L/qL)_{\mu}$ for $b \in B_{\mu}$,

\item  $L/qL = \soplus_{b \in B} l_{b}$,

\item $\tkone l_{b} \subset l_{b}$,
\item for $i=1, \ldots, n-1, \ol 1$, we have
\be[{\rm(1)}]
\item
if $\tei b=0$ then $\tei l_{b} =0$, and
otherwise $\tei$ induces an isomorphism
$l_{b}\isoto l_{\tei b}$.
\item
if $\tfi b=0$ then $\tfi l_{b} =0$,
and otherwise $\tfi$ induces an isomorphism $l_{b}\isoto l_{\tfi b}$.
\ee
\end{enumerate}
\end{enumerate}

\edf

As proved in \cite{GJKKK}, for every crystal basis $(L, B, l_{B})$ of a $\uqqn$-module $M$
we have $\teone^2 = \tfone^2 = 0$ as endomorphisms on $L/qL$.

\begin{example}
Let $$\V = \soplus_{j=1}^n \F v_{j} \oplus \soplus_{j=1}^n \F
v_{\ol j}$$ be the vector representation of $U_q(\mathfrak{q}(n))$.
%The action of $\Uq$ on $\V$ is given as follows: \\
%$e_iv_j=\delta_{j,i+1}v_i$, $e_iv_{\ol j}=\delta_{j,i+1}v_{\ol i}$,
%$f_iv_j=\delta_{j,i}v_{i+1}$, $f_iv_{\ol j}=\delta_{j,i}v_{\ol{i+1}}$,
%$e_{\ol i}v_j=\delta_{j,i+1}v_{\ol{i}}$,
%$e_{\ol i}v_{\ol j}=\delta_{j,i+1}v_{i}$,
%$f_{\ol i}v_j=\delta_{j,i}v_{\ol{i+1}}$,
%$f_{\ol i}v_{\ol j}=\delta_{j,i}v_{{i+1}}$,
%$q^h v_j=q^{\epsilon_j(h)} v_j$, $q^h v_{\ol j}=q^{\epsilon_j(h)} v_{\ol j}$,
%$k_{\ol i}v_j=\delta_{j,i}v_{\ol j}$,
%$k_{\ol i}v_{\ol j}=\delta_{j,i}v_{j}$. \\
Set $$\mathbf{L} = \soplus_{j=1}^n \A v_{j} \oplus
\soplus_{j=1}^n \A v_{\ol j},$$ $l_{j} = \C v_{j} \oplus \C
v_{\ol j}$, and let $\B$ be the crystal graph given below.

$$
\xymatrix@C=5ex
{*+{\young(1)} \ar@<0.1ex>[r]^-{1}
\ar@{-->}@<-0.9ex>[r]_{\ol 1} & *+{\young(2)} \ar[r]^2 & *+{\young(3)} \ar[r]^3 & \cdots \ar[r]^{n-1} & *+{\young(n)} }
$$
%Here, the actions of $\tfi$ $(i=1, \ldots, n-1, \ol 1)$ are expressed by $i$-arrows.
Then $(\mathbf{L}, \B,
l_{\B}=(l_j)_{j=1}^n)$ is a crystal basis of $\V$.
\end{example}

\vskip 2ex

The {\em queer tensor product rule} for the crystal bases of
$\uqqn$-modules in the category $\Oint$ is given by the following
theorem.

\Th \cite[Theorem 2.7]{GJKKK2} \hfill

Let $M_j$ be a $\uqqn$-module in $\Oint$ with crystal basis $(L_j,
B_j, l_{B_j})$ $(j=1,2)$. Set $$B_1 \otimes B_2 = B_1 \times B_2
\quad \text{and} \quad l_{B_{1} \otimes B_{2}}=(l_{b_1} \otimes
l_{b_2})_{b_1 \in B_1, b_2 \in B_2}.$$ Then $$(L_1 \otimes_{\A} L_2,
B_1 \otimes B_2, l_{B_1 \otimes B_2})$$ is a crystal basis of $M_1
\otimes_{\F} M_2$, where the action of the Kashiwara operators on
$B_1 \otimes B_2$ are given as follows.

\begin{equation} \label{eq1:tensor product}
\begin{aligned}
\tei(b_1 \otimes b_2) & = \begin{cases} \tei b_1 \otimes b_2 \ &
\text{if} \ \vphi_i(b_1) \ge \eps_i(b_2), \\
b_1 \otimes \tei b_2 \ & \text{if} \ \vphi_i(b_1) < \eps_i(b_2),
\end{cases} \\
\tfi(b_1 \otimes b_2) & = \begin{cases} \tfi b_1 \otimes b_2 \
& \text{if} \  \vphi_i(b_1) > \eps_i(b_2), \\
b_1 \otimes \tfi b_2 \ & \text{if} \ \vphi_i(b_1) \le \eps_i(b_2),
\end{cases}
\end{aligned}
\end{equation}
\begin{equation} \label{eq2:tensor product}
\begin{aligned}
\teone (b_1 \otimes b_2) & = \begin{cases} \teone b_1 \otimes b_2
& \text{if $\lan k_1, \wt b_2 \ran =\lan k_2, \wt b_2 \ran =0$,} \\
b_1 \otimes \teone b_2
&  \text{otherwise,}
\end{cases} \\
\tfone(b_1 \otimes b_2) & = \begin{cases} \tfone b_1 \otimes b_2
& \text{if $\lan k_1, \wt b_2 \ran = \lan k_2, \wt b_2 \ran =0$,}
 \\
b_1 \otimes \tfone b_2   %& \text{if} \ \tfone b_2 \neq 0, \\
%0 \
& \text{otherwise}.
\end{cases}
\end{aligned}
\end{equation}
 \enth

\Def An {\em abstract $\mathfrak{q}(n)$-crystal} is a
$\mathfrak{gl}(n)$-crystal together with the maps $\teone, \tfone\cl B
\to B \sqcup \{0\}$ satisfying the following conditions:
\bna
\item $\wt(B)\subset P^{\ge0}$,
\item $\wt(\teone b) = \wt(b) + \alpha_1$, $\wt(\tfone b) = \wt(b) -
\alpha_1$,
\item for all $b, b' \in B$, $\tfone b = b'$ if and only if $b = \teone b'$.
\item if $3 \le i \le n-1$, we have
\bni
\item the operators $\te_{\ol{1}}$ and $\tf_{\ol{1}}$ commute $\te_i$ and $\tf_i$.
\item if $\te_{\ol 1} b \in B$, then $\varepsilon_i(\te_{\ol 1} b)=\varepsilon_i( b)$
and $\varphi_i(\te_{\ol 1} b) = \varphi_i(b)$.
\ee
\end{enumerate}
 \edf
For an abstract $\q(n)$-crystal $B$ and an element $b \in B$,
we denote by $C(b)$ the connected component of $b$ in $B$.

Let $B_1$ and $B_2$ be abstract $\qn$-crystals. The {\em tensor
product} $B_1 \otimes B_2$ of $B_1$ and $B_2$ is defined to be the
$\mathfrak{gl}(n)$-crystal $B_1 \otimes B_2$ together with the maps
$\teone$, $\tfone$ defined by \eqref{eq2:tensor product}.
Then it is an abstract $\qn$-crystal. Note that $\otimes$ satisfies
the associativity axiom on the set of abstract $\mathfrak{q}(n)$-crystals.

The next lemma follows directly from \eqref{eq1:tensor product} and \eqref{eq2:tensor product}.
It will be used in Section 2 and Section 4.
\begin{lemma} \label{le_multi_tensor}
Let $B_j$ $(j=1,\ldots, N)$ be abstract $\qn$-crystals, and let $b_j \in B_j$ $(j =1, \ldots, N)$.
\bna
\item For $i \in \{1,,\ldots, n-1, \ol{1}\}$, suppose that
$\tf_i(b_1 \otimes \cdots \otimes b_N) = b_1 \otimes \cdots \otimes  b_{k-1} \otimes \tf_i b_k \otimes b_{k+1} \otimes \cdots \otimes b_N$ for some $1 \leq k \leq N$.
Then for any positive integers $j$ and $m$ such that $1 \leq j \leq k \leq m$, we have
$$\tf_i(b_j \otimes \cdots \otimes b_m) = b_j \otimes \cdots \otimes  b_{k-1} \otimes \tf_i b_k \otimes b_{k+1} \otimes \cdots \otimes b_m.$$

\item For $i \in \{1,,\ldots, n-1, \ol{1}\}$, suppose that $\te_i(b_1 \otimes \cdots \otimes b_N) = b_1 \otimes \cdots \otimes b_{k-1} \otimes \te_i b_k \otimes b_{k+1} \otimes \cdots \otimes b_N$ for some $1 \leq k \leq N$. Then
for any positive integers $j$ and $m$ such that $1 \leq j \leq k \leq m$, we have
$$\te_i(b_j \otimes \cdots \otimes b_m) = b_j \otimes \cdots \otimes b_{k-1} \otimes \te_i b_k \otimes b_{k+1} \otimes \cdots \otimes b_m.$$

\ee
\end{lemma}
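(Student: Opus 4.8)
The plan is to deduce the whole lemma from the associativity of $\otimes$ (recorded just above the statement) together with the binary tensor product rules \eqref{eq1:tensor product} and \eqref{eq2:tensor product}. The one feature of those rules that drives everything is that, for every $i \in \{1,\ldots,n-1,\ol 1\}$, the operator $\tf_i$ (resp. $\te_i$) on a two-fold tensor always acts on \emph{exactly one} of the two factors: the outcome is either $\tf_i b_1 \otimes b_2$ or $b_1 \otimes \tf_i b_2$, leaving the other factor untouched. So first I would fix the terminology that $\tf_i$ \emph{acts in the $k$-th factor} of $b_1 \otimes \cdots \otimes b_M$ when $\tf_i(b_1 \otimes \cdots \otimes b_M) = b_1 \otimes \cdots \otimes b_{k-1} \otimes \tf_i b_k \otimes b_{k+1} \otimes \cdots \otimes b_M$ (in particular $\tf_i b_k \ne 0$), and then reduce (a) to two single-deletion claims, treating the even operators $\tf_i$ $(1 \le i \le n-1)$ and the odd operator $\tfone$ simultaneously, since both obey a binary rule of the same shape.

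The first single-deletion claim strips the rightmost factor: if $\tf_i$ acts in the $k$-th factor of $b_1 \otimes \cdots \otimes b_M$ with $k \le M-1$, then it acts in the $k$-th factor of $b_1 \otimes \cdots \otimes b_{M-1}$. To see this I would write, using associativity, $b_1 \otimes \cdots \otimes b_M = c \otimes b_M$ with $c = b_1 \otimes \cdots \otimes b_{M-1}$. By the binary rule, $\tf_i(c \otimes b_M)$ is either $\tf_i c \otimes b_M$ or $c \otimes \tf_i b_M$; since two pure tensors coincide iff all their components coincide, and a Kashiwara operator never fixes the factor it modifies, the hypothesis $k \le M-1$ (the last component being unchanged) forces the first alternative. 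Comparing components then gives $\tf_i c = b_1 \otimes \cdots \otimes \tf_i b_k \otimes \cdots \otimes b_{M-1}$, which is the claim. The second single-deletion claim, stripping the leftmost factor, is symmetric: writing $b_1 \otimes \cdots \otimes b_M = b_1 \otimes d$ with $d = b_2 \otimes \cdots \otimes b_M$, the assumption $k \ge 2$ (the first component unchanged) forces $\tf_i(b_1 \otimes d) = b_1 \otimes \tf_i d$, so $\tf_i$ acts in the $k$-th factor of $d$.

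To finish part (a) I would start from the hypothesis on $b_1 \otimes \cdots \otimes b_N$ and apply the first claim $N-m$ times to remove $b_N, b_{N-1}, \ldots, b_{m+1}$; each removal is legitimate because $k \le m$ keeps the action in an interior position, yielding that $\tf_i$ acts in the $k$-th factor of $b_1 \otimes \cdots \otimes b_m$. I would then apply the second claim $j-1$ times to remove $b_1, \ldots, b_{j-1}$, legitimate because $k \ge j$, producing the asserted identity for $b_j \otimes \cdots \otimes b_m$. Part (b) is word-for-word the same with $\te_i$ in place of $\tf_i$, since the $\te$-rules in \eqref{eq1:tensor product} and \eqref{eq2:tensor product} again modify a single factor.

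There is no serious obstacle here; the argument is essentially formal. The only points requiring care are to verify cleanly that the binary rule genuinely alters a single factor, so that matching its output against the known outcome pins down which factor is hit, and to keep track through the iterated deletions that each step preserves $j \le k \le m$, so that the operator keeps acting in an interior position and neither single-deletion claim is applied outside its range.
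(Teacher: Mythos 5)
Your proof is correct, and it is essentially the argument the paper has in mind: the paper offers no written proof, stating only that the lemma ``follows directly from \eqref{eq1:tensor product} and \eqref{eq2:tensor product},'' and your write-up is precisely the routine verification of that claim via associativity, the binary rules, and the weight argument ($\wt(\tf_i b)=\wt(b)-\alpha_i$, so an operator never fixes the factor it modifies) that pins down which factor is hit. The iterated right- and left-deletion bookkeeping, with $j\le k\le m$ preserved at each step, is exactly the care needed to make ``follows directly'' rigorous.
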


\vskip 2ex

\begin{example} \label{ex_abstract_crystal}

\bna
\item If $(L, B, l_{B})$ is a crystal basis of a $\Uq$-module $M$ in the category $\Oint$, then $B$ is an abstract
$\qn$-crystal.

\item The crystal graph $\B$ is an abstract $\qn$-crystal.

\item By the tensor product rule, $\B^{\otimes N}$ is an abstract
$\qn$-crystal. When $n=3$, the $\qn$-crystal structure of $\B
\otimes \B$ is given below.

$$\xymatrix
{*+{\young(1) \otimes \young(1)} \ar[r]^1 \ar@{-->}[d]^{\ol 1} &
 *+{\young(2) \otimes \young(1)} \ar@<-0.5ex>[d]_1 \ar@{-->}@<0.5ex>[d]^{\ol 1} \ar[r]^2&
 *+{\young(3) \otimes \young(1)} \ar@<-0.5ex>[d]_1 \ar@{-->}@<0.5ex>[d]^{\ol 1} \\
 *+{\young(1) \otimes \young(2)} \ar[d]^2 &
 *+{\young(2) \otimes \young(2)} \ar[r]_2 &
 *+{\young(3) \otimes \young(2)} \ar[d]^2 \\
 *+{\young(1) \otimes \young(3)} \ar@<0.5ex>[r]^1 \ar@{-->}@<-0.5ex>[r]_{\ol 1} &
 *+{\young(2) \otimes \young(3)} &
 *+{\young(3) \otimes \young(3)}
 }$$

\end{enumerate}
\end{example}

\vskip 2ex

Let $W$ be the Weyl group of $\gl(n)$ and let $B$ be an abstract
$\qn$-crystal. For $i=1,\ldots n-1$, we define the automorphism
$S_i$ on $B$ by
$$S_i b = \begin{cases}
\tf_i^{\langle h_i, \wt b \rangle} b & \text{if} \ \langle h_i, \wt b \rangle \ge 0, \\
\te_i^{-\langle h_i, \wt b \rangle} b & \text{if} \ \langle h_i, \wt b \rangle \le 0.
\end{cases}$$

As shown in \cite{Kas91}, there exists a unique (well-defined)
action $S : W \to \Aut B$ such that $S_{s_i}=S_i$. Here $s_i$ is the
simple reflection given by $s_i(\la)=\la- \lan h_i, \la \ran
\alpha_i$ $(\la \in \mathfrak{h}^*)$. Note that $\wt(S_w b) = w
(\wt(b))$ for any $w \in W$ and $b \in B$.

For $i=1,\ldots n-1$, set
\eq
&&w_i = s_2 \cdots s_{i} s_1 \cdots s_{i-1}.
\label{def:wi}
\eneq
Then $w_i$ is the shortest element in $W$ such that $w_i(\alpha_i) = \alpha_1$.
We define the {\em odd Kashiwara operators} $\teibar$, $\tfibar$ $(i=2, \ldots, n-1)$ by
$$\teibar = S_{w_i^{-1}} \teone
S_{w_i}, \ \ \tfibar = S_{w_i^{-1}} \tfone S_{w_i}.$$

\begin{definition}
Let $B$ be an abstract $\q(n)$-crystal and $1\le a\le n$. \bna
\item An element $b \in B$ is called a
\emph{$\gl(a)$-highest weight vector}
if $\te_i b = 0$ for $1 \leq i < a$.
\item An element $b \in B$ is called a \emph{$\q (a)$-highest weight vector}
if $\te_i b = \teibar b = 0$ for $1 \leq i < a $.

\item An element $b \in B$ is called a \emph{$\q (n)$-lowest weight
vector} if $S_{w_0}b$ is a $\q (n)$-highest weight vector, where $w_0$ is the longest element of $W$.
%Equivalently, $b$ is a lowest weight vector if $\tfi b = \tfibar b =0$ for all $i=1, \ldots, n-1$.
\end{enumerate}
\end{definition}

\vskip 2ex

The $\q (n)$-highest (respectively, lowest) weight vectors will be called \emph{highest} (respectively, \emph{lowest) weight vectors}.
We denote by $\mathcal{HW} (\lambda)$  (respectively, $\mathcal{LW} (\lambda)$)
the set of highest (respectively, lowest) weight vectors of weight $\la$ in $\B^{\otimes |\lambda|}$.
The description of $\mathcal{HW} (\lambda)$ (and hence of $\mathcal{LW} (\lambda)$) is given by the following proposition (see Theorem 4.6 (c) in \cite{GJKKK2})

\begin{prop} \label{prop_char.h.w}
An element  $b_0$ in $\B^{\tensor N}$ is a highest weight vector if and only if $b_0 = 1 \tensor \tf_1 \cdots \tf_{j-1} b$ for some $j$
and some highest weight vector $b$ in $\B^{\otimes (N-1)}$ such that
$\wt(b_0) = \wt(b)+\epsilon_j$ is a strict partition.
\end{prop}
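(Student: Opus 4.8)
The plan is to prove both implications by induction on $N$, peeling off the leftmost tensor factor of an element $b_0=c\otimes b'$ with $c\in\B$ and $b'\in\B^{\otimes(N-1)}$. Identifying $\B$ with $\{1,\dots,n\}$ as in the crystal graph of $\B$, so that $\eps_i(k)=\delta_{k,i+1}$ and $\vphi_i(k)=\delta_{k,i}$ for $1\le i\le n-1$, the even conditions $\te_i b_0=0$ are governed by the signature rule in \eqref{eq1:tensor product}, while the odd conditions $\teibar b_0=0$ are governed by \eqref{eq2:tensor product} together with the definition $\teibar=S_{w_i^{-1}}\teone S_{w_i}$. The one computational input I will use repeatedly is the consequence of \eqref{eq1:tensor product} that $\eps_i(b_1\otimes b_2)=\eps_i(b_1)+\max\bl(0,\eps_i(b_2)-\vphi_i(b_1)\br)$, and in particular $\eps_i(b_1\otimes b_2)\ge\eps_i(b_1)$.

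For the forward implication, suppose $b_0=c\otimes b'$ is a highest weight vector. If $c=k$ with $k\ge 2$, then $\eps_{k-1}(b_0)\ge\eps_{k-1}(c)=1$, contradicting $\te_{k-1}b_0=0$; hence $c=1$. Writing $b_0=1\otimes b'$ and inserting $\eps_i(1)=0$, $\vphi_i(1)=\delta_{i,1}$ into the displayed formula, the vanishing $\eps_i(b_0)=0$ forces $\eps_i(b')=0$ for $2\le i\le n-1$ and $\eps_1(b')\le 1$. Thus $b'$ sits just below the $\gl(n)$-highest weight vector $b$ of its $\gl(n)$-connected component, and the next step is a combinatorial lemma asserting that these constraints force the lowering path from $b$ to $b'$ to be of the special form $b'=\tf_1\tf_2\cdots\tf_{j-1}b$ for a unique $j$, equivalently $b=\te_{j-1}\cdots\te_2\te_1 b'$. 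The weight identity $\wt(b_0)=\epsilon_1+\wt(b')=\wt(b)+\epsilon_j$ then holds automatically, and one checks separately, using the odd operators, that the weight of any highest weight vector is a strict partition, which supplies $\wt(b_0)\in\Lambda^+$. It remains to transfer the odd annihilation from $b_0$ to $b$: applying \eqref{eq2:tensor product} to $1\otimes b'$ and using Lemma \ref{le_multi_tensor} to localize the operators, one shows that $\teone b_0=0$ and the higher $\teibar b_0=0$ descend to the corresponding statements for $b$, so that $b$ is again a highest weight vector and the induction closes.

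For the converse, let $b$ be a highest weight vector in $\B^{\otimes(N-1)}$ with $\wt(b)+\epsilon_j\in\Lambda^+$, and set $b_0=1\otimes\tf_1\cdots\tf_{j-1}b$. Since $\wt(b)$ is a strict partition, the hypothesis $\wt(b)+\epsilon_j\in\Lambda^+$ constrains $j$ relative to the parts of $\wt(b)$ so that each factor $\tf_i$ in $\tf_1\cdots\tf_{j-1}$ acts nontrivially on the intermediate vectors and so that $b'=\tf_1\cdots\tf_{j-1}b$ satisfies $\eps_i(b')=0$ for $i\ge 2$ and $\eps_1(b')\le 1$. Feeding this back into the formula for $\eps_i(1\otimes b')$ gives $\te_i b_0=0$ for all $i$, establishing the even half. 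The odd annihilations $\teibar b_0=0$ are then verified directly from \eqref{eq2:tensor product}, reading off $\lan k_1,\wt(\cdot)\ran$ and $\lan k_2,\wt(\cdot)\ran$ from the strict partition $\wt(b_0)$ and using that $b$ is itself killed by $\teone$ and the $\teibar$.

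The main obstacle is the odd part, and specifically the operators $\teibar=S_{w_i^{-1}}\teone S_{w_i}$ for $i\ge 2$: unlike the even operators, these are not local on the tensor product, so their vanishing cannot be read off from a single factor. The crux is to show that the strict-partition condition on $\wt(b_0)$ is precisely equivalent to the simultaneous annihilation $\teibar b_0=0$ for all $i$. I expect to handle this by conjugating through the Weyl group element $w_i$ and invoking the abstract $\qn$-crystal axioms --- that $\teone,\tfone$ commute with $\te_i,\tf_i$ and preserve $\eps_i,\vphi_i$ for $3\le i\le n-1$ --- to reduce each $\teibar$-computation to the interaction of $\teone$ with the three coordinates $j-1,j,j+1$ of the weight, where the strictness of $\wt(b_0)$ enters decisively. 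Establishing this equivalence, together with the combinatorial lemma pinning down the lowering path as $\tf_1\cdots\tf_{j-1}$, is the heart of the argument; the remaining even computations are routine consequences of the signature rule.
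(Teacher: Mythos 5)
First, a point of comparison: the paper offers no proof of Proposition \ref{prop_char.h.w} at all --- it is quoted from \cite{GJKKK2} (Theorem 4.6 (c) there) --- so your argument has to stand on its own, and as written it is an outline whose two load-bearing steps are missing. The even-operator bookkeeping is correct: if $b_0=c\otimes b'$ is highest then $c=1$ (since $\eps_{k-1}(b_0)\ge\eps_{k-1}(c)$), and $\eps_i(1\otimes b')=\max\bigl(0,\eps_i(b')-\delta_{i,1}\bigr)$ forces $\eps_i(b')=0$ for $i\ge2$ and $\eps_1(b')\le1$. But (i) the ``combinatorial lemma'' that these constraints force $b'=\tf_1\cdots\tf_{j-1}b$ with $b$ the $\gl(n)$-highest weight vector of its component is asserted, not proved; this is essentially (a) of Lemma 3.3 in \cite{GJKKK2} --- the very statement this paper itself invokes in the proof of Lemma \ref{le_lowest weight vector} --- and it requires a genuine signature-rule argument that you do not supply. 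And (ii) the entire odd half --- that annihilation of $b_0$ by all $\teibar$ passes down to $b$ and back, and is what forces strictness of $\wt(b_0)$ --- is announced (``I expect to handle this by conjugating through $w_i$\dots'') rather than carried out. Since you yourself identify (i) and (ii) as the heart of the argument, what you have is a correct plan, not a proof.

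Two concrete symptoms of the gap. In the converse direction you claim the odd annihilations $\teibar b_0=0$ are ``verified directly from \eqref{eq2:tensor product}''; but \eqref{eq2:tensor product} governs only $\teone$ and $\tfone$, and for $i\ge2$ the operators $\teibar=S_{w_i^{-1}}\teone S_{w_i}$ are not local on tensor factors, as your own closing paragraph concedes --- so this step, as stated, fails. Second, the forward direction leans on the unproved claim that weights of highest weight vectors are strict partitions; a $\gl(n)$-argument can only ever give dominance, and strictness is exactly what the $\teibar$'s enforce, so invoking it ``separately'' is close to circular. The delicacy here is real: for $n=3$, both $1\otimes1\otimes2$ and $1\otimes2\otimes1$ are $\gl(3)$-highest weight vectors of the same strict dominant weight $2\epsilon_1+\epsilon_2$, yet only the second is a $\q(3)$-highest weight vector, since $\teone(1\otimes1\otimes2)=1\otimes1\otimes1\neq0$ by \eqref{eq2:tensor product}. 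Hence no amount of even bookkeeping plus weight conditions can distinguish the two; the missing odd-operator analysis is unavoidable, and it is precisely what the cited proof in \cite{GJKKK2} supplies.
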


The following theorem is part of the main result in \cite{GJKKK2}.

\begin{theorem} \label{th_crystal}
\bna
\item  For any $\la \in \La^{+}$,
there exists a crystal basis
$(L, B, l_{B})$ of the irreducible highest weight module $V(\la)$
such that
\bni
\item $B_{\la} = \{b_{\la} \}$,
\item $B$ is connected.
\end{enumerate}
Moreover, such a crystal basis is unique.
In particular $B$ depends only on $\la$ as an abstract $\q(n)$-crystal.
Hence we may write $B=B(\la)$.

\item The $\qn$-crystal $B(\la)$ has a unique highest weight vector
$b_{\la}$ and unique lowest weight vector $l_{\la}$. % for $|\la| \le N$.

\end{enumerate}

\end{theorem}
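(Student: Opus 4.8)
The plan is to deduce both statements from three ingredients already available: the existence-and-uniqueness theorem for crystal bases in $\Oint$ of \cite{GJKKK}, the semisimplicity of $\Oint$ together with the fact that every irreducible appears in tensor powers of $\V$ (Proposition \ref{cor:Vtens}), and the combinatorial description of highest weight vectors (Proposition \ref{prop_char.h.w}). For existence in part (a), I would first realize $V(\la)$ concretely: since $\la\in\La^{+}$ is a strict partition of $N=|\la|$, Proposition \ref{cor:Vtens} places $V(\la)$ as a direct summand of $\V^{\otimes N}$, and the queer tensor product theorem equips $\V^{\otimes N}$ with the crystal basis $(\mathbf{L}^{\otimes N},\B^{\otimes N},l_{\B^{\otimes N}})$. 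Restricting the lattice $\mathbf{L}^{\otimes N}$ and the family $l_{\B^{\otimes N}}$ to the summand $V(\la)$ then produces a crystal basis $(L,B,l_B)$, with $B$ the connected component of $\B^{\otimes N}$ containing the image of the highest weight vector. Here I would take $b_\la$ to be the $\q(n)$-highest weight vertex of weight $\la$ supplied by Proposition \ref{prop_char.h.w}, and observe that $V(\la)_\la$, which carries the Clifford-module structure, is entirely absorbed into the single attached space $l_{b_\la}=(L/qL)_\la$; this is consistent with the relations $\teone^2=\tfone^2=0$ on $L/qL$.

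For the normalization $B_\la=\{b_\la\}$, I would argue that since $\la$ is the highest weight of the irreducible $V(\la)$, neither $\la+\alpha_i$ nor $\la+\alpha_1$ is a weight, so \emph{every} vertex of weight $\la$ is a $\q(n)$-highest weight vector; Proposition \ref{prop_char.h.w} then shows there is exactly one such vertex, giving $B_\la=\{b_\la\}$. Connectedness of $B$ is built into the construction as a connected component and reflects that $V(\la)$ is generated by its top weight space. For uniqueness, I would invoke the uniqueness clause of the theorem of \cite{GJKKK}: any crystal basis of $V(\la)$ is isomorphic to the one just built, so the underlying abstract $\q(n)$-crystal depends only on $\la$, justifying the notation $B(\la)$; the two conditions $B_\la=\{b_\la\}$ and connectedness rigidify this isomorphism.

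For part (b), the vertex $b_\la$ is by construction a highest weight vector. To see it is the only one, I would use that, via Proposition \ref{prop_char.h.w} and semisimplicity, the highest weight vectors of weight $\nu$ in $\B^{\otimes N}$ are in bijection with the irreducible summands of type $V(\nu)$ they generate; the connected component $B(\la)\simeq V(\la)$ corresponds to a single such summand and therefore contains a single source, necessarily $b_\la$. For the lowest weight vector I would exploit that the Weyl-group action $S\colon W\to\Aut B$ is by bijections and that, by definition, $b$ is a lowest weight vector exactly when $S_{w_0}b$ is a highest weight vector. Hence the set of lowest weight vectors is the singleton $S_{w_0}^{-1}(\{b_\la\})$, and I would set $l_\la=S_{w_0}^{-1}b_\la$.

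The hard part will be the normalization together with the uniqueness of the highest weight vector: one must show that the noncommutative Cartan and the resulting Clifford-module structure on the top weight space collapse into the \emph{single} vector space $l_{b_\la}$ attached to one vertex, rather than producing several vertices of weight $\la$, and correspondingly that the odd operators $\teibar,\tfibar$ glue the a priori several $\gl(n)$-components of $B$ into one connected $\q(n)$-crystal with a unique source. This is precisely the phenomenon that distinguishes the queer case from the $\gl(n)$ case, and it is where the detailed analysis of \cite{GJKKK} and \cite{GJKKK2} is indispensable.
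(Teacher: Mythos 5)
The paper offers no proof of this statement: Theorem \ref{th_crystal} is imported wholesale, with the single remark that it ``is part of the main result in \cite{GJKKK2}.'' So your proposal has to stand on its own as a derivation from the other quoted results, and it does not: two of its pivotal steps are unjustified, and one of them is actually false. (i) The existence step assumes that a crystal basis of $\V^{\otimes N}$ restricts to a crystal basis of any direct summand. The lattice does restrict (take $L=\mathbf{L}^{\otimes N}\cap V(\la)$), but $L/qL$ is then only a Kashiwara-stable subspace of $\soplus_{b\in\B^{\otimes N}}l_b$; nothing in the definitions forces it to be the direct sum of a subfamily of the $l_b$'s, i.e.\ forces the module decomposition of $\V^{\otimes N}$ to be compatible with its crystal basis. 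That compatibility is precisely the content of the existence theorem of \cite{GJKKK2} --- the very result being proved --- so the construction is circular. (ii) The claim that highest weight vectors of weight $\nu$ in $\B^{\otimes N}$ are ``in bijection with the irreducible summands of type $V(\nu)$ they generate'' is false in the queer setting, and it is the step on which both the normalization $B_\la=\{b_\la\}$ and part (b) rest. Counterexample: $\B^{\otimes 2}\simeq\B(2\epsilon_1)$ is connected with a single highest weight vector, yet $\V^{\otimes 2}\simeq V(2\epsilon_1)^{\oplus 2}$ as a module (the highest weight space of $V(2\epsilon_1)$ is an irreducible two-dimensional Clifford supermodule, so $\dim V(2\epsilon_1)=2n^2$ while $\dim \V^{\otimes 2}=4n^2$, and $2\epsilon_1$ is the only strict partition of $2$). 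Crystal multiplicities and module multiplicities differ by powers of $2$; this is exactly why the definition allows $\dim l_b>1$. For the same reason Proposition \ref{prop_char.h.w} cannot yield $B_\la=\{b_\la\}$: it counts highest weight vectors in all of $\B^{\otimes N}$, and there are $f^\la$ of them of weight $\la$ (one per standard shifted tableau, as Section 4 of this paper shows), not one; what you need, and do not prove, is that a single connected component contains only one.

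Two pieces of your argument are correct and worth keeping: every vertex of weight $\la$ in a crystal basis of $V(\la)$ is automatically a $\q(n)$-highest weight vector (a pure weight argument, since $\la+\alpha_i$ is not a weight of $V(\la)$), and uniqueness of the lowest weight vector follows from uniqueness of the highest weight vector because $S_{w_0}$ is a bijection interchanging the two notions. But these observations reduce the theorem exactly to the two points above, and your closing paragraph concedes that those points require ``the detailed analysis of \cite{GJKK} and \cite{GJKKK2}.'' In other words, once the faulty steps are removed, the proposal collapses back to the citation that the paper itself makes; the material you add beyond that citation is either circular or incorrect, so this is not an independent proof of the statement.
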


%\vskip 1cm
We close this section with preparatory statements that will be useful in the following sections.
%last section.
\begin{lemma} \label{le_lowest weight vector}
 Let $a \in \B$ and $b \in \B^{\otimes N}$. Then $a \otimes b$ is a lowest weight vector if and only if
 $b$ is a lowest weight vector and $\epsilon_a + \wt b \in w_0 \Lambda^+$.
\end{lemma}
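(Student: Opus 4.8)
The plan is to reduce everything to the highest-weight side via the definition of lowest weight vector together with Proposition \ref{prop_char.h.w}. By definition $a\otimes b$ is a lowest weight vector iff $S_{w_0}(a\otimes b)$ is a highest weight vector, and since $S_{w_0}$ is a bijection with $\wt(S_{w_0}v)=w_0\wt(v)$, I would split the highest-weight condition into its even ($\gl(n)$) part and its odd part and treat them separately, proving both implications at once. In the forward direction the weight condition is automatic: any lowest weight vector lies in a connected component $B(\nu)$ with $\nu\in\Lambda^{+}$, whose lowest weight is $w_0\nu$ by Theorem \ref{th_crystal}, so $\wt(a\otimes b)=\epsilon_a+\wt b\in w_0\Lambda^{+}$.

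For the even part I would argue directly on $a\otimes b$ rather than on $S_{w_0}(a\otimes b)$. Using the tensor product rule \eqref{eq1:tensor product} together with Lemma \ref{le_multi_tensor}, $\tf_i(a\otimes b)$ equals $\tf_i a\otimes b$ when $\varphi_i(a)>\varepsilon_i(b)$ and $a\otimes\tf_i b$ otherwise. A short case analysis then shows that $\tf_i(a\otimes b)=0$ for all $i=1,\dots,n-1$ is equivalent to $\tf_i b=0$ for all $i$ together with $\langle h_i,\epsilon_a+\wt b\rangle\le 0$ for all $i$; the first says $b$ is $\gl(n)$-lowest, and the second is exactly the antidominance guaranteed once $\epsilon_a+\wt b\in w_0\Lambda^{+}$. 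The only delicate case is $a=\text{box }i$ with $\varepsilon_i(b)=0$, and this is precisely excluded by antidominance, since it would force $\langle h_i,\epsilon_a+\wt b\rangle=1>0$.

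The heart of the matter, and the step I expect to be the main obstacle, is the odd part: granted the even conditions, I must show that the full $\q(n)$-extremality of $a\otimes b$ is equivalent to the full $\q(n)$-extremality of $b$ plus the \emph{strictness} built into $w_0\Lambda^{+}$. The difficulty is that, unlike $\teone$ and $\tfone$ (whose action on a tensor product is given explicitly by \eqref{eq2:tensor product}, branching according to whether $\langle k_1,\cdot\rangle$ and $\langle k_2,\cdot\rangle$ vanish on the right factor), neither the higher operators $\teibar,\tfibar$ nor $S_{w_0}$ distribute over $\otimes$ in any simple way—$S_{w_0}$ genuinely mixes the tensor factors, so one cannot merely peel off the left box $a$ after applying it. My plan is therefore to reduce the odd highest-weight condition on $S_{w_0}(a\otimes b)$ to the single operator $\teone$ (using the mechanism behind Proposition \ref{prop_char.h.w}, namely that a $\gl(n)$-highest weight vector in $\B^{\otimes N}$ is a highest weight vector precisely when it is annihilated by $\teone$), and then to track how the leftmost box is transported by a reduced expression for $S_{w_0}$ via the relations $\teibar=S_{w_i^{-1}}\teone S_{w_i}$, so that the branching in \eqref{eq2:tensor product} can be read off from the weights of the remaining factors. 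Matching the outcome of this bookkeeping against the strictness of $\nu$ is exactly what upgrades ``$b$ is $\gl(n)$-lowest'' to ``$b$ is a lowest weight vector,'' and is where the real work lies: the existence, already in $\B^{\otimes 2}$, of vectors that are $\gl(n)$-lowest without being lowest weight vectors shows that antidominance and strictness of the weight alone are not sufficient, so this reduction to $\teone$ through $S_{w_0}$ cannot be avoided.
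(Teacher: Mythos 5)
Your $\gl(n)$ analysis is correct (it is the part the paper dismisses as ``straightforward to check''), and you correctly locate the crux in the odd operators. But your proposal never closes that crux: what you offer for the odd part is a plan, not an argument, and the plan amounts to reproving from scratch the external result on which the paper's proof rests. The paper invokes Lemma 3.3(a) of \cite{GJKKK2}: for \emph{any} $\gl(n)$-lowest weight vector $a \otimes b \in \B \otimes \B^{\otimes N}$ one has $S_{w_0}(a \otimes b) = 1 \otimes \tf_1 \cdots \tf_{j-1} S_{w_0} b$, and weight comparison forces $j = n-a+1$. This identity is exactly the ``peeling off of the left box after applying $S_{w_0}$'' that you declare impossible to do naively; once it is available, both directions of the lemma follow in one step from Proposition \ref{prop_char.h.w}, since that proposition characterizes highest weight vectors of $\B^{\otimes (N+1)}$ precisely as the vectors $1 \otimes \tf_1 \cdots \tf_{j-1} b'$ with $b'$ a highest weight vector and strict total weight --- no reduced-expression bookkeeping and no $\teone$-criterion are needed. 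Your substitute route has two unproven ingredients: (1) the transport of the leftmost factor through $S_{w_0}$, which you describe (``track how the leftmost box is transported'') but do not perform, and performing it is essentially reproving the cited lemma; and (2) the claim that a $\gl(n)$-highest weight vector of $\B^{\otimes N}$ killed by $\teone$ is automatically a $\q(n)$-highest weight vector. The latter is not what Proposition \ref{prop_char.h.w} says (it is a recursive characterization, and to apply it to $S_{w_0}(a\otimes b)$ you must already know that this vector has the form $1\otimes\tf_1\cdots\tf_{j-1}S_{w_0}b$, which is again the missing identity), and it is stated and proved nowhere in this paper.

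A secondary factual slip in your closing motivation: the vectors of $\B^{\otimes 2}$ that are $\gl(n)$-lowest without being lowest weight vectors (e.g.\ $2 \otimes 3$ for $n=3$) all have weight whose $w_0$-image is $\epsilon_1+\epsilon_2$, which is \emph{not} a strict partition, so they do not witness the insufficiency of ``antidominant and strict.'' The smallest witnesses occur in $\B^{\otimes 3}$: for $n=3$, the vector $3 \otimes 2 \otimes 3$ is $\gl(3)$-lowest of weight $w_0(2\epsilon_1+\epsilon_2) \in w_0\Lambda^+$, yet it is not a lowest weight vector because $\wt(2\otimes 3)=\epsilon_2+\epsilon_3 \notin w_0\Lambda^+$ (Corollary \ref{cor_lowest}). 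Your underlying point --- that the full lowest-weight property of $b$, not mere $\gl(n)$-lowestness plus a weight condition, is genuinely needed --- is nonetheless correct; it just does not substitute for the missing argument.
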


\begin{proof}
Let $a \otimes b \in \B \otimes \B^{\otimes N}$ be a $\gl(n)$-lowest weight vector.
Then $b$ is a $\gl(n)$-lowest weight vector in $\B^{\otimes N}$
and hence $S_{w_0} b$ is the unique $\gl(n)$-highest weight vector
in the $\gl(n)$-connected component containing $b$ in $\B^{\otimes N}$.
By (a) of Lemma 3.3 in \cite{GJKKK2}, it follows that
$S_{w_0}(a \otimes b) = 1 \otimes \tf_1 \cdots \tf_{j-1} S_{w_0} b$ for some $1 \le j \le n$.
Comparing the weights, we have $j=n-a+1$.

Now let $a \otimes b$ be a $\q(n)$-lowest weight vector.
Then $a \otimes b$ is a $\gl(n)$-lowest weight vector and hence
$S_{w_0} (a\otimes b) = 1 \otimes \tf_1 \cdots \tf_{n-a} S_{w_0} b$.
By Proposition \ref{prop_char.h.w}, $S_{w_0} b$ is a $\q(n)$-highest weight vector and $w_0(\epsilon_a + \wt b) \in \Lambda^+$.

Conversely, let $S_{w_0} b$ is a $\q(n)$-highest weight vector and
$w_0(\epsilon_a + \wt b) \in \Lambda^+$. It is straightforward to
check that $a \otimes b$ is a $\gl(n)$-lowest weight vector. Hence
$S_{w_0} (a\otimes b) = 1 \otimes \tf_1 \cdots \tf_{n-a} S_{w_0} b$.
Again, by By Proposition \ref{prop_char.h.w}, we conclude that
$S_{w_0}(a \otimes b)$ is a $\q(n)$-highest weight vector.
\end{proof}
%We know that $a \otimes b \in \B \otimes \B^{\otimes N}$ is a $\gl(n)$-lowest weight vector
%if and only if $b$ is a $\gl(n)$-lowest weight vector and $w_0(\wt(a \otimes b))$ is a partition.
%Now we apply Proposition \ref{th:char.h.w} to $S_{w_0} ( a \otimes b )$.
%Then $S_{w_0} (a\otimes b) = 1 \otimes \tf_1 \cdots \tf_{n-a} S_{w_0} b$ is a highest weight vector if and only if $S_{w_0} b$ is a highest weight vector and $\wt S_{w_0} (a \otimes b) \in \Lambda^{+}$.

The following corollary immediately follows from the preceding
lemma.
\begin{corollary} \label{cor_lowest}
Let $b_1, \ldots ,b_N$ be elements in $\B$. Then $b_1 \otimes \cdots \otimes b_N$ is a lowest weight vector in $\B^{\otimes N}$ if and only if
$\wt(b_k)+\cdots +\wt(b_N)\in w_0 \Lambda^+$ for all $k =1, \ldots, N$.
\end{corollary}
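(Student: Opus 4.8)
The plan is to derive Corollary \ref{cor_lowest} from Lemma \ref{le_lowest weight vector} by induction on $N$. The base case $N=1$ is immediate: a single element $b_1 \in \B$ is automatically a lowest weight vector (since $\B$ is a single $\q(n)$-crystal string and every element of a connected crystal of this shape is both highest and lowest in the appropriate sense, or more directly, the condition reduces to $\wt(b_1) \in w_0\Lambda^+$, which holds since $\wt(b_1)=\epsilon_j$ for some $j$ and $w_0\epsilon_j = \epsilon_{n-j+1}$ is a strict partition). So the content is entirely in the inductive step.

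For the inductive step, I would set $a = b_1$ and $b = b_2 \otimes \cdots \otimes b_N$, viewing $b_1 \otimes \cdots \otimes b_N$ as $a \otimes b \in \B \otimes \B^{\otimes(N-1)}$. Applying Lemma \ref{le_lowest weight vector} directly, $b_1 \otimes \cdots \otimes b_N$ is a lowest weight vector if and only if $b = b_2 \otimes \cdots \otimes b_N$ is a lowest weight vector in $\B^{\otimes(N-1)}$ and $\epsilon_{b_1} + \wt(b) \in w_0\Lambda^+$. Note that $\epsilon_{b_1} + \wt(b) = \wt(b_1) + \wt(b_2) + \cdots + \wt(b_N)$, which is exactly the $k=1$ case of the desired condition $\wt(b_k) + \cdots + \wt(b_N) \in w_0\Lambda^+$. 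By the induction hypothesis applied to $b_2 \otimes \cdots \otimes b_N$, the statement that $b$ is a lowest weight vector is equivalent to $\wt(b_k) + \cdots + \wt(b_N) \in w_0\Lambda^+$ for all $k = 2, \ldots, N$. Combining the $k=1$ condition with these gives the condition for all $k = 1, \ldots, N$, completing the induction.

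There is one small bookkeeping point to verify, namely that $\wt(b_1) = \epsilon_{b_1}$ under the identification of elements of $\B$ with the indices $1, \ldots, n$; this is simply the weight convention from the crystal graph of $\V$ in the relevant example, where the box labeled $j$ has weight $\epsilon_j$. I expect no genuine obstacle here: the corollary is a direct iteration of the lemma, and the only thing to be careful about is correctly matching the indexing of the weight $\epsilon_a$ in Lemma \ref{le_lowest weight vector} with the running partial sums $\wt(b_k) + \cdots + \wt(b_N)$ so that the two conditions line up across the induction. The entire argument is formal once the lemma is in hand.
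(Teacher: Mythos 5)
Your inductive step is exactly the paper's intended argument: the paper derives Corollary \ref{cor_lowest} by simply iterating Lemma \ref{le_lowest weight vector} (it states that the corollary ``immediately follows from the preceding lemma''), and your application of that lemma with $a=b_1$, $b=b_2\otimes\cdots\otimes b_N$, combined with the induction hypothesis, is correct and is essentially what the paper does.

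However, your base case is justified by two genuinely false claims. It is \emph{not} true that every element of $\B$ is a lowest weight vector, and it is \emph{not} true that $w_0\epsilon_j=\epsilon_{n-j+1}$ is a strict partition for every $j$. By definition, $b_1\in\B$ is a lowest weight vector iff $S_{w_0}b_1$ is a highest weight vector; the unique highest weight vector of $\B$ is $1$, so this forces $\wt(b_1)=w_0\epsilon_1=\epsilon_n$, i.e.\ $b_1=n$. Likewise $\epsilon_{n-j+1}\in\Lambda^+$ forces $n-j+1=1$, i.e.\ $j=n$: a weight supported in a single row below the first is not a partition at all, let alone a strict one. So neither side of the $N=1$ equivalence holds automatically; rather, \emph{both} sides hold precisely when $b_1=n$, and that coincidence is what makes the base case true. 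The biconditional you need therefore survives, and the induction goes through once the base case is argued this way, but as written your justification rests on a misreading of the crystal $\B$ (only its element $n$ is a lowest weight vector), which is worth correcting since the same misconception would cause real damage in the surrounding results (e.g.\ in Lemma \ref{le_lowest weight vector} itself and in Theorem \ref{th_SSDT_crystal}(b), where uniqueness of the lowest weight vector is the whole point).
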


\begin{definition}
A finite sequence of positive integers $x=x_1 \cdots x_N$ is called
a {\em strict reverse lattice permutation} if for $1 \le k \le N$
and $2 \le i \le n$, the number of occurrences of $i$ is strictly
greater than the number of occurrences of $i-1$ in $x_k  \cdots
x_N$ as soon as $i-1$ appears in  $x_k  \cdots  x_N$.
\end{definition}

Then we can rephrase Corollary~\ref{cor_lowest}
as follows.
\begin{corollary} \label{cor_strict reverse lattice permutation}
A vector $b_1 \otimes \cdots \otimes b_N \in \B^{\otimes N}$ is a
lowest weight vector if and only if it is a strict reverse lattice
permutation.
\end{corollary}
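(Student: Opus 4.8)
The plan is to deduce Corollary~\ref{cor_strict reverse lattice permutation} directly from Corollary~\ref{cor_lowest} by translating the weight condition $\wt(b_k)+\cdots+\wt(b_N)\in w_0\Lambda^+$ into the combinatorial language of strict reverse lattice permutations. First I would recall that each $b_j\in\B$ corresponds to a letter in $\{1,\ldots,n\}$ and carries weight $\wt(b_j)=\epsilon_{m_j}$, where $m_j$ is the entry of the box. Thus for each suffix the partial weight $\wt(b_k)+\cdots+\wt(b_N)$ is the vector whose $i$-th coordinate counts the number of occurrences of the letter $i$ in $b_k\cdots b_N$. So the whole statement reduces to understanding when such a nonnegative integral weight $\mu=\sum_i c_i\epsilon_i$ (with $c_i\ge0$ the multiplicity of $i$) lies in $w_0\Lambda^+$.

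The key step is the characterization of membership in $w_0\Lambda^+$. Since $w_0$ is the longest element of $W=S_n$, it reverses the order of coordinates, so $\mu\in w_0\Lambda^+$ if and only if $w_0\mu\in\Lambda^+$, i.e.\ the reversed weight $(c_n,c_{n-1},\ldots,c_1)$ is a strict partition. By the definition of $\Lambda^+$, this means $c_n\ge c_{n-1}\ge\cdots\ge c_1\ge0$ with the further \emph{strictness} condition that whenever two consecutive entries are equal they must both vanish; equivalently $c_i<c_{i+1}$ as soon as $c_i>0$. Translating back, this says precisely that for every $i$ with $2\le i\le n$, the number of $i$'s strictly exceeds the number of $(i-1)$'s whenever the letter $i-1$ actually occurs in the suffix. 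Applying this to every suffix $b_k\cdots b_N$ for $k=1,\ldots,N$ exactly recovers the definition of a strict reverse lattice permutation.

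Concretely, I would prove the equivalence $\mu\in w_0\Lambda^+\iff(\text{the strictness inequalities hold})$ as a short standalone observation, then quantify over all $k$ and invoke Corollary~\ref{cor_lowest}. The forward direction uses that a lowest weight vector forces every suffix weight into $w_0\Lambda^+$, which by the observation yields the lattice-permutation inequalities; the converse reverses this reasoning. I do not expect a genuine obstacle here, since the content is entirely a dictionary between the two formulations. The only point requiring care is to match the boundary case of the $\Lambda^+$ condition (equal nonzero parts are forbidden) with the phrase ``as soon as $i-1$ appears'': when $c_{i-1}=0$ no constraint is imposed, while when $c_{i-1}>0$ we need the strict inequality $c_{i-1}<c_i$. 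Verifying that these two descriptions coincide for each suffix is the crux, and it is immediate once the weight-counting interpretation of $\wt(b_k)+\cdots+\wt(b_N)$ is made explicit.
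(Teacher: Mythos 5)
Your proposal is correct and takes essentially the same approach as the paper: the paper gives no separate argument, presenting this corollary as a direct rephrasing of Corollary~\ref{cor_lowest}, and your explicit dictionary between the suffix-weight condition $\wt(b_k)+\cdots+\wt(b_N)\in w_0\Lambda^+$ and the counting inequalities is precisely the (implicit) content of that rephrasing. In particular, your characterization of membership in $w_0\Lambda^+$ --- namely $c_{i-1}<c_i$ as soon as $c_{i-1}>0$, for the multiplicities $c_i$ of the letters in each suffix --- correctly matches the paper's definition of $\Lambda^+$, so the argument is complete.
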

%\begin{proof}
%If $b_1 \otimes \cdots \otimes b_N$ is a lowest weight vector,
%then $b_2 \otimes \cdots \otimes b_N$ is a lowest weight vector by Corollary \ref{cor_lowest},
%and hence it is a strict reverse lattice permutation by induction on $N$.
%Now it is enough to show that the number of occurrence of $j+1$ is strictly greater than
%the number of occurrence of $j$ in $b_1 \otimes \cdots \otimes b_N$, where $b_1=j$.
%It follows immediately from the fact $\wt(b_1 \otimes \cdots \otimes b_N) \in w_0 \Lambda^+$.
%
%If $b_1 \otimes \cdots \otimes b_N$ is a strict reverse lattice permutation,
%then $b_2 \otimes \cdots \otimes b_N$ is also a strict reverse lattice permutation,
%and $b_2 \otimes \cdots \otimes b_N$ is a lowest weight vector by induction on $N$.
%Since the number of occurrence of $j+1$ is strictly greater than
%the number of occurrence of $j$ in $b_1 \otimes \cdots \otimes b_N$, where $b_1=j$,
%we have $\epsilon_j + \wt(b_2 \otimes \cdots \otimes b_N) \in w_0 \Lambda^+$.
%By Lemma \ref{le_lowest weight vector}, we obtain the desired result.
%\end{proof}
%

%\begin{corollary} \label{cor_lowest}
% Let $b_1, \ldots ,b_N$ be elements in $\B$. Then $b_1 \otimes \cdots \otimes b_N$ is a lowest weight vector
% in $\B^{\otimes N}$ if and only if for all $2 \le i \le n$ and $1 \le k \le N $, the number of occurrences of $i$ is strictly greater than the number of occurrences of $i-1$ in $b_k \otimes \cdots \otimes b_N$.
%\end{corollary}
\vskip 3mm

\section{Semistandard decomposition tableaux}

\subsection{Semistandard decomposition tableaux}
For a strict partition $\lambda = (\lambda_1,\ldots,\lambda_n)$, we
set $|\lambda|:=\lambda_1 +\ldots+\lambda_n$. Recall that $\ell
(\lambda)$ is the number of nonzero $\lambda_i$'s.
% and denote by  $\ell (\lambda)$ the number of nonzero $\lambda_i$'s.
\begin{definition}\label{def_ssdt} \hfill
\bna
\item  The \emph{shifted Young diagram of shape $\lambda$}
is an array of square cells in which the $i$-th row has $\lambda_i$
cells, and is shifted $i-1$ units to the right with respect to the
top row. In this case, we  say that $\lambda$ is a \emph{shifted
shape}.
\item A word $u = u_1 \cdots  u_N$  is a \emph{hook word} if there exists $1 \le k \le N$ such that
\begin{equation}\label{hookequation}
u_1 \ge u_2 \ge \cdots \ge u_k <u_{k+1} <\cdots < u_N.
\end{equation}
 Every hook word has the \emph{decreasing part} $u \downarrow   = u_1 \cdots u_k$, and the \emph{increasing part} $u \uparrow = u_{k+1} \cdots u_N$ (note that the decreasing part is always nonempty).
\item A \emph{semistandard decomposition tableau of a shifted shape $\lambda = (\lambda_1, \ldots, \lambda_n)$ }
is a filling $T$ of $\lambda$ with elements of $\{1,2,\ldots,n\}$
such that: \bni
\item the word $v_i$ formed by reading the $i$-th row from left to right is a hook word of length $\lambda_i$,
\item $v_i$ is a hook subword of maximal length in $v_{i+1} v_i$ for $1 \le i \le \ell(\lambda)-1$. %where $r=\ell(\lambda)$.
\end{enumerate}

%\item An SSDT $R$ is called a \emph{standard shifted tableau} if all entries of $R$ are strictly increasing from left to right in each row and strictly increasing from top to bottom in each column.

%\item The  \emph{content} of an SSDT $R$ is the vector $(\mu_1,...,\mu_n)$ where  $\mu_i$ is the number of occurrences of $i$ in $R$.

\item The \emph{reading word} of a semistandard decomposition tableau $T$ is
$$\readw (T) = v_{\ell(\lambda)} v_{\ell(\lambda)-1} \cdots v_1.$$
\end{enumerate}
\end{definition}

\begin{remark} \hfill
\bi
\item
Our definition of a hook word, and hence of a semistandard
decomposition tableau, is different from the one used in
\cite{Serra}, where $u \downarrow  $ is assumed to be strictly
decreasing, while $u \uparrow $ is weakly increasing. Later, we will
consider the $\qn$-crystal structure on the set of all semistandard
decomposition tableaux of a shifted shape $\la$. Then the highest
weight vectors and the lowest weight vectors have simpler forms in
our choice than the ones in \cite{Serra} (see Example \ref{ex_hw}
and Remark \ref{rem_Serrano's SSDT}).

\item The term ``hook word'' in \cite{St} refers to a word $u$ with strictly
decreasing $u \downarrow  $ and strictly increasing $u \uparrow$.
This definition leads to the notion of \emph{standard
decomposition tableaux}.

\item
If there is any, the way to view a word as a semistandard
decomposition tableau is unique. \ee
\end{remark}

We have an alternative criterion to determine
whether a filling of shifted shape $\la$ (equivalently, its reading word) is a semistandard decomposition tableau or not.
\begin{prop} \label{pro_criterion for SSDT}
Let $u= u_1 \cdots u_{\ell} $ and $u'= u'_1 \cdots u'_{\ell'} $ be hook words with $1 \le \ell' < \ell$.
Then $u'u$ is a semistandard decomposition tableau if and only if for $1 \leq i \leq j \leq \ell'$,
\bna
\item if $u_i \leq u'_j$, then $i \neq 1$ and $u'_{i-1} < u'_j$,
\item if $u_i > u'_j$, then $u_i \geq u_{j+1}$.
\ee
 This is equivalent to saying that none of the following conditions
 holds.
\bni
\item $u_1 \leq u'_i$ $(1 \leq i \leq \ell')$,
\item $u'_i \geq u'_j \geq u_{i+1}$ $(i < j \leq \ell')$,
\item $u'_j < u_i < u_{j+1}$ $(i \leq j \leq \ell')$.
\ee

\begin{proof}
Assume that $u'u$ is a semistandard decomposition tableau.

If $u_1 \leq u'_i$ for $1 \leq i \leq \ell'$, then $u'_i u_1 u_2
\cdots u_{\ell}$ is a hook subword of $u'u$ with length $\ell+1$,
which is a contradiction.

If $u'_i \ge u'_j \ge u_{i+1}$ for $i < j \le \ell'$, then $u'_i \in
u' \downarrow $ and hence $u'_1 \cdots u'_i u'_j u_{i+1} \cdots
u_{\ell}$ is a hook subword of $u'u$ of length $\ell +1$, which is a
contradiction.

If $u'_j < u_i < u_{j+1}$ for $i \le j \le \ell'$, then $u_{j+1} \in
u \uparrow $ and hence $u'_1 \cdots u'_j u_i u_{j+1} \cdots
u_{\ell}$ is a hook subword of $u'u$ of length $\ell +1$, which is a
contradiction.

\vskip 1em Now assume that none of (i), (ii), (iii) holds. Suppose
that $v$ is a hook subword of $u'u$ of length greater than $\ell$.
Let $x$ be the first letter in $v \cap u$ and let $x'$ be the last
letter in $v \cap u'$.

\emph{Case 1:}
$x' \ge x$.

Note that $x$ cannot be $u_1$, since (i) does not hold. Let $x$ be
$u_{i+1}$ for some $i \in \{1,2,\ldots, \ell-1\}$ and let $x'$ be
$u'_j$ for some $j \in \{1,2,\ldots, \ell'\}$. Then the length of $v
\cap u$ is less than or equal to $\ell-i$ and hence the length of $v
\cap u'$ is greater than or equal to $i+1$, which implies $i < j$.
Moreover, since $i+1 \ge 2$, $v \cap u'$ contains another letter
besides $u'_j$. Let $u'_k$ be the second last letter in $v \cap u'$
($k < j$). Then we have $k \ge i$, since the length of $v \cap u'$
is greater than or equal to $i+1$. Because $u'_j \ge u_{i+1}$, we
have $u'_j \in v \downarrow $ and hence $u'_k \in v \downarrow$.
Thus we get $u'_k \ge u'_j$. It follows that $u'_k \in u'
\downarrow$ and hence $u'_i \ge u'_k \ge u'_j \geq u_{i+1}$, which
is a contradiction to (ii).

\emph{Case 2:}
$x' < x$.

Let $x$ be $u_i$ for some $i \in \{1,2,\ldots, \ell\}$ and let $x'$
be $u'_j$ for some $j \in \{1,2,\ldots, \ell'\}$. Note that the
length of $v \cap u' $ is less than or equal to $j$ and hence the
length of $v \cap u$ is greater than or equal to $\ell-j+1$, because
the length of $v$ is greater than $\ell$. Moreover we have $i \le
j$, and  $v \cap u$ contains another letter in $u$ besides $u_i$.
Let $u_k$ be the second letter in $v \cap u$ ($k > i$). Note that $k
\le j+1$ since the length of $v \cap u $ is greater than or equal to
$\ell -j +1$. On the other hand, $u'_j < u_i$ implies $u_i, u_k  \in
v \uparrow$. Thus we have $u_i < u_k.$ It follows that $u_k \in u
\uparrow$ so that $u'_j < u_i < u_k \leq u_{j+1}$, which is a
contradiction to (iii).
\end{proof}
\end{prop}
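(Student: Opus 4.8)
The plan is to first unwind what the statement ``$u'u$ is a semistandard decomposition tableau'' means for a single pair of rows. By Definition \ref{def_ssdt}, since $u'$ and $u$ are already hook words by hypothesis, the only surviving requirement is condition (ii) of that definition: that $u$ be a hook subword of \emph{maximal} length in $u'u$. Since $u$ is itself a hook subword of $u'u$ of length $\ell$, this is equivalent to saying that $u'u$ contains no hook subword of length $>\ell$. Thus the whole proposition reduces to the equivalence
$$\bigl(\text{$u'u$ has no hook subword of length $>\ell$}\bigr)\iff\bigl(\text{none of (i), (ii), (iii) holds}\bigr).$$
Before the main argument I would dispose of the purely formal claim that the inequality system (a)--(b) is a repackaging of the negations of (i)--(iii): reading (a) at $i=1$ gives exactly the negation of (i); reading (a) at $i\ge 2$ is, after the substitution $a=i-1$, the contrapositive of the negation of (ii); and (b) is the contrapositive of the negation of (iii). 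I would present this matching of index ranges in a line or two.

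For necessity (if $u'u$ is a semistandard decomposition tableau, then none of (i)--(iii) holds) I would argue by contraposition, assuming each of (i), (ii), (iii) in turn and exhibiting an explicit hook subword of length $\ell+1$, contradicting maximality. Concretely, (i) lets me prepend $u'_i$ to $u$ to form $u'_i u_1\cdots u_\ell$, which is a hook word because $u'_i\ge u_1$ merely lengthens the decreasing part. For (ii), the hypothesis $u'_i\ge u'_j$ with $i<j$ forces $u'_i$ into the decreasing part $u'\downarrow$, so $u'_1\cdots u'_i\,u'_j\,u_{i+1}\cdots u_\ell$ is a hook word of length $\ell+1$. For (iii), the inequalities $u'_j<u_i<u_{j+1}$ force $u_{j+1}$ into the increasing part $u\uparrow$, so $u'_1\cdots u'_j\,u_i\,u_{j+1}\cdots u_\ell$ is a hook word of length $\ell+1$. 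In each case the hook condition is a short monotonicity check using the pivot structure of $u$ and $u'$.

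The sufficiency direction is where the real work lies, and I expect it to be the main obstacle. Here I assume a hook subword $v$ of $u'u$ with $|v|>\ell$ exists and must force one of (i)--(iii); together with the necessity direction this gives the equivalence. The key device is to examine the ``seam'': let $x$ be the first letter of $v$ drawn from $u$ and $x'$ the last letter of $v$ drawn from $u'$, and split into the cases $x'\ge x$ and $x'<x$. In each case a length count — using $|v|>\ell$, $|u|=\ell$, and $|u'|=\ell'<\ell$ — forces $v$ to reuse enough letters on each side to pin down the relevant indices; for instance, writing $x=u_{i+1}$ and $x'=u'_j$ in the first case, the count forces $i<j$ and the existence of a second-to-last $u'$-letter $u'_k$ with $k\ge i$, while the negation of (i) is what rules out $x=u_1$. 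Then the hook monotonicity of $v$ (consecutive letters in $v\downarrow$ are weakly decreasing, those in $v\uparrow$ strictly increasing) places $x'$ and its neighbours into $v\downarrow$ or $v\uparrow$ and transfers this to the hook structure of $u'$ or $u$, landing exactly in (ii) in the first case and in (iii) in the second. The delicate point I would be most careful about is precisely this interplay of the length inequalities with the monotonicity of $v$: one must track which letters lie in the decreasing versus increasing part of $v$ and match the extracted index inequalities to the exact form of (ii) and (iii), since an off-by-one in the indices would produce a slightly different condition.
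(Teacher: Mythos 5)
Your proposal is correct and follows essentially the same route as the paper's proof: the necessity direction uses the identical three explicit hook subwords of length $\ell+1$ (prepending $u'_i$, inserting $u'_j$ after $u'_i$, and inserting $u_i$ before $u_{j+1}$), and the sufficiency direction uses the same seam device (first letter $x$ of $v$ in $u$, last letter $x'$ of $v$ in $u'$), the same case split $x'\ge x$ versus $x'<x$, and the same length-counting plus hook-monotonicity argument landing in contradictions to (ii) and (iii) respectively. The only addition is your explicit check that (a)--(b) repackage the negations of (i)--(iii), which the paper asserts without proof; that is a harmless (and slightly more careful) supplement rather than a different approach.
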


\bigskip

If $T$ is a semistandard decomposition tableau of shifted shape
$\lambda$, we write $\sh(T) = \lambda$. Let $\B (\lambda)$ denote
the set of all semistandard decomposition tableau $T$ with $\sh (T)
= \lambda$. For every $\lambda \in \Lambda^+$, we have the following
embedding
$$\readw: \B (\lambda) \to \B^{\otimes |\lambda|}, \; T \mapsto \readw(T).$$
Using this embedding, we identify $\B(\lambda)$ with a subset in
$\B^{\otimes |\lambda|}$ and define the action of the Kashiwara
operators $\tei, \teibar, \tfi, \tfibar$ on the elements in $ \B
(\lambda)$. The question is whether the set $\B(\lambda)$ is closed
under these operators.

For a strict partition $\lambda$ with $\ell(\lambda) =r$, set
\begin{align}
T^{\lambda}:=&
(1^{\lambda_r}) (2^{\lambda_r} 1^{\lambda_{r-1} - \lambda_r}) \cdots
((r-k+1)^{\lambda_r} (r-k)^{\lambda_{r-1}-\lambda_r} \cdots 1^{\lambda_{k} - \lambda_{k+1}}) \nonumber \\
&\cdots (r^{\lambda_r} (r-1)^{\lambda_{r-1}-\lambda_r} \cdots
1^{\lambda_1 - \lambda_2}), \nonumber
\end{align}
$$L^{\lambda}:=(n-r+1)^{\lambda_r} \cdots (n-k+1)^{\lambda_k} \cdots n^{\lambda_1}.$$
Then we have $S_{w_0} T^{\la} = L^{\la}$.

\bigskip

\begin{example} \label{ex_hw}
Let  $n=4$ and $\lambda = (6,4,2,1)$. Then we have
$$T^{\lambda}=\young(432211,:3211,::21,:::1) \ \text{and} \ L^{\lambda}=\young(444444,:3333,::22,:::1).$$
\end{example}

\bigskip

Our first main result is given in the following theorem.

\begin{theorem} \label{th_SSDT_crystal}
Let $\la$ be a strict partition with $\ell(\lambda)=r$.

\bna
\item The set $ \B (\lambda) \cup \{0\}$ is closed under the action of the Kashiwara operators.
In particular,  $ \B (\lambda) $ becomes an abstract  $\mathfrak{q}(n)$-crystal.

\item The element $T^{\lambda}$ is a unique highest weight vector in $\B (\lambda)$ and
$L^{\lambda}$ is a unique lowest weight vector in $\B (\lambda)$.
\item
%The map $\readw : \B(\lambda) \to B(\lambda)$ is a crystal isomorphism.
The abstract $\q(n)$-crystal $\B(\la)$ is isomorphic to $B(\la)$,
the crystal of the irreducible highest weight module $V(\la)$.
\end{enumerate}

\end{theorem}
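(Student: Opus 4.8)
The plan is to prove the three assertions of Theorem~\ref{th_SSDT_crystal} in the order (a), (b), (c), using the embedding $\readw \colon \B(\la) \to \B^{\otimes|\la|}$ and the queer tensor product rule as the main tools. The whole strategy rests on reducing questions about the abstract $\q(n)$-crystal structure on $\B(\la)$ to the well-understood crystal $\B^{\otimes|\la|}$, and then identifying $\B(\la)$ with $B(\la)$ via the characterization of highest weight vectors in Theorem~\ref{th_crystal} and Proposition~\ref{prop_char.h.w}.

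For part (a), the key step is closure under the Kashiwara operators. Since $\B(\la)$ is embedded in $\B^{\otimes|\la|}$ via the reading word, the operators $\tei,\tfi,\teone,\tfone$ already act on $\readw(T)$; what must be shown is that whenever $\tei\readw(T)\neq 0$ (and similarly for $\tfi$, $\teone$, $\tfone$), the resulting word is again the reading word of a semistandard decomposition tableau of the same shifted shape $\la$. I would verify this using the explicit tensor product rules \eqref{eq1:tensor product} and \eqref{eq2:tensor product} together with the combinatorial criterion of Proposition~\ref{pro_criterion for SSDT}: applying an even operator $\tfi$ changes some entry $i\mapsto i+1$ in a single row, and one checks that conditions (i), (ii), (iii) of Proposition~\ref{pro_criterion for SSDT} continue to fail for every pair of adjacent rows. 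The odd operators require the same kind of case analysis but are localized since $\teone,\tfone$ only affect entries $1,2$. Lemma~\ref{le_multi_tensor} is what lets me pass between acting on the full reading word and acting within a single row or a pair of rows, so that the verification is genuinely local. Once closure is established, $\B(\la)$ inherits the abstract $\q(n)$-crystal structure automatically, since all the defining axioms hold in $\B^{\otimes|\la|}$.

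For part (b), I would show that $T^\la$ is a highest weight vector by checking $\tei\,\readw(T^\la)=\teibar\,\readw(T^\la)=0$ directly from the explicit form of $T^\la$; uniqueness then follows because the weight of $T^\la$ equals $\la$, which is $\la$-dominant, combined with the fact (from part (c), or provable independently) that a connected $\q(n)$-crystal has a unique highest weight vector. The statement for $L^\la$ follows by applying $S_{w_0}$ and the already-noted identity $S_{w_0}T^\la=L^\la$, together with Lemma~\ref{le_lowest weight vector} or Corollary~\ref{cor_strict reverse lattice permutation}, which characterizes lowest weight vectors as strict reverse lattice permutations; one reads off directly that $L^\la=(n-r+1)^{\la_r}\cdots n^{\la_1}$ satisfies this condition and is the only element of $\B(\la)$ that does. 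For part (c), the plan is to invoke Theorem~\ref{th_crystal}: it suffices to exhibit that $\B(\la)$ is connected with a unique highest weight vector of weight $\la$, since then $\B(\la)\simeq B(\la)$ as abstract $\q(n)$-crystals by the uniqueness clause. Connectedness I would obtain by showing every $T\in\B(\la)$ can be joined to $T^\la$ by a sequence of Kashiwara operators, using the highest weight vector characterization in Proposition~\ref{prop_char.h.w} inductively on $|\la|$.

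The main obstacle is part (a), specifically the stability of $\B(\la)$ under the \emph{odd} operators $\teone,\tfone$ and their conjugates $\teibar,\tfibar$. The even operators preserve shifted shape by a standard argument paralleling the $\gl(n)$ case, but the odd operators interact with the hook-word structure in a more delicate way: $\tfone$ can convert a $1$ into a $2$ in a position where the hook condition \eqref{hookequation} and the maximal-subword condition (ii) of Definition~\ref{def_ssdt} must be rechecked simultaneously across two rows, and one must rule out that the decreasing/increasing breakpoint $k$ of a row shifts in a way that destroys condition (ii). I expect the proof to require careful bookkeeping via Proposition~\ref{pro_criterion for SSDT} to confirm that none of the forbidden patterns (i)--(iii) is created; this is where the bulk of the casework lies, and where the specific choice of hook word made in this paper (as opposed to Serrano's) is likely to be essential for the argument to close cleanly.
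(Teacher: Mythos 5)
Your plan for part (a) is essentially the paper's: localize to one or two rows via Lemma \ref{le_multi_tensor} and run a case analysis. Whether one then verifies the semistandard-decomposition-tableau property through Proposition \ref{pro_criterion for SSDT} (your choice) or, as the paper does, by converting any hypothetical too-long hook subword of the modified word into a too-long hook subword of the original word, is a minor variation; note only that Proposition \ref{pro_criterion for SSDT} presupposes that each row is a hook word, so you still need the paper's preliminary step showing that $\tei,\tfi,\teone,\tfone$ send hook words to hook words. Also, you can drop your worry about $\teibar,\tfibar$ for $i\ge 2$: since $\teibar=S_{w_i^{-1}}\teone S_{w_i}$ and $\tfibar=S_{w_i^{-1}}\tfone S_{w_i}$, and each $S_w$ is a composite of even Kashiwara operators, closure under $\tei,\tfi$ $(1\le i\le n-1)$ and $\teone,\tfone$ already gives closure under them.

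The genuine gap is in parts (b) and (c). As you have structured the argument, uniqueness of the highest weight vector in (b) is deduced from connectedness, while connectedness in (c) is to be proved by ``joining every $T$ to $T^\la$,'' which is nothing other than uniqueness of the highest weight vector; this circle must be broken by an actual argument, and both of your candidate arguments omit its key step. First, it is not true that one can ``read off directly'' that $L^\la$ is the only element of $\B(\la)$ whose reading word is a strict reverse lattice permutation: Corollary \ref{cor_strict reverse lattice permutation} only tells you which words are lowest weight vectors, and the content is to exclude all other elements of $\B(\la)$. The paper does this by induction on $|\la|$: via the embedding $\B(\la)\hookrightarrow\B\otimes\B(\la')$ with $\la'=\la-\epsilon_r$ and Lemma \ref{le_lowest weight vector}, any lowest weight vector of $\B(\la)$ has the form $j\otimes L^{\la'}$, and the cases $j=n-r$ (with $\la'_r\ge2$) and $j>n-r+1$ are then killed by exhibiting explicit violations of the defining conditions --- e.g.\ for $j>n-r+1$ the subword $j\otimes(n-r+2)^{\la_{r-1}}$ is a hook subword of length $\la_{r-1}+1$ inside the bottom two rows. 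Second, your alternative route through Proposition \ref{prop_char.h.w} runs into the same exclusion problem in a messier form: the induction only yields that a highest weight vector of $\B(\la)$ must equal $1\otimes\tf_1\cdots\tf_{j-1}T^{\la'}$ for some $j$ with $\la'+\epsilon_j$ a strict partition, and one must still prove that exactly one such $j$ produces an element of $\B(\la)$; since these candidates carry applied Kashiwara operators, checking membership in $\B(\la)$ is strictly harder than for the untouched candidates $j\otimes L^{\la'}$ on the lowest-weight side, which is precisely why the paper works with lowest weight vectors. Until this exclusion step is supplied, (b) and hence (c) remain unproven.
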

\begin{proof}
(a)
{\it Step 1:}
Let $u = u_1 \cdots u_N$ be a hook word such that
$$u_1 \geq u_2 \geq \cdots \geq u_k < u_{k+1} < \cdots < u_N.$$
We will prove that $\tfi u, \tei u$ $(i=1,...,n-1, \ol{1})$ are hook words, when they are nonzero.

Assume that $\tf_i u \neq 0$ for some $i \in \{1,\ldots, n-1 \}$.
Note that $u$ can be regarded as a semistandard tableau of shape $k
\epsilon_1 + \epsilon_2 + \cdots + \epsilon_{N-k+1}$. Since the set
of semistandard tableaux of a skew shape is closed under the action
of the even Kashiwara operators, $\tf_i u$ is a semistandard tableau
of shape $k \epsilon_1 + \epsilon_2 + \cdots + \epsilon_{N-k+1}$ and
hence it is a hook word. For the same reason, we deduce that $\te_i
u$ is a hook word for $i=1, 2, \ldots, n-1$, unless it is zero.

Assume that $\tf_{\ol 1} u \neq 0$. Then we have $u_k =1 $, $u_{j} > 2$ for $j \geq k+1$ and hence
$$\tf_{\ol 1} u = u_1 \cdots u_{k-1} 2 u_{k+1} \cdots u_N.$$
Note that if $u_{k-1} =1$, then $u_1 \geq  \cdots \geq u_{k-1} < 2 < u_{k+1} < \cdots < u_N$,
and if $u_{k-1} \geq 2$ then $u_1 \geq  \cdots \geq u_{k-1} \geq 2 < u_{k+1} < \cdots < u_N$.
In both cases, $\tf_{\ol 1} u $ is a hook word.

Assume that $\te_{\ol 1} u \neq 0$. Since $u_k = \min \{u_j ;
j=1,\ldots N\}$, we have $u_k \leq 2$. If $u_k = 2$, then $u_{j} >
2$ for $j \geq k+1$ and hence $\te_{\ol 1} u = u_1 \cdots u_{k-1} 1
u_{k+1} \cdots u_N$. It follows that $\te_{\ol 1} u$ is a hook word.
If $u_k = 1$, then $u_{k+1}=2$, because $\te_{\ol 1} u \neq 0$.
Hence we get $\te_{\ol 1} u = u_1 \cdots u_{k-1} 1 1 u_{k+2} \cdots
u_n$, which is a hook word.

Let $v_j$ be the reading word of the $j$-th row of a semistandard
decomposition tableau $u$. By Lemma \ref{le_multi_tensor}, we know
that $\tf_i u = v_r \cdots \tf_i v_a \cdots v_1$ for some $1 \le a
\le r$, and $\te_i u = v_r \cdots \te_i v_b \cdots v_1 $ for some $1
\le b \le r$. Hence we conclude all the rows of $\tf_i u$ and $\te_i
u$ are again hook words. \vskip 1em

{\it Step 2:} Let $u = u_1 \cdots u_N$ be a semistandard
decomposition tableau of shifted shape $\lambda$. We will show that
$\tfi u, \tei u$ ($i=1,...,n-1, \ol{1}$) satisfy the condition in
Definition \ref{def_ssdt} (c) (ii), when they are nonzero. We will
prove our claim in four separate cases. \vskip 1em

{\it Case 1:}
For an $i \in \{1,\ldots,n-1\}$, assume that $\tf_i u = u_1 \cdots u_{t-1} u'_t u_{t+1} \cdots u_N $, where $u_t=i$ and $u'_t=\tf_i u_t=i+1$.
Let $v'=u_{j_1} \cdots u_{j_{\ell-1}} u'_t u_{j_{\ell+1}} \cdots u_{j_r}$ be a hook subword of $\tf_i u$.
By Lemma \ref{le_multi_tensor}, taking two consecutive rows of $u$ which contains $u_t$, one can assume that $\lambda_3=0$ from the beginning.
 Then it is enough to show that there exists a hook subword $v$ of $u$ of length $r$.

{\bf (i)} Suppose $u'_t \in v' \downarrow$.
Let $u'_t=u_{j_{\ell+1}}=u_{j_{\ell+2}}= \cdots =u_{j_{\ell+s}}=i+1$ and
$u_{j_{\ell+s+1}} \neq i+1$ for some $s \geq 0$.
Here, we regard $u_{j_{\ell+s+1}}$ as the empty word, if $\ell+s=r$.

If $s=0$, then replacing $u'_t$ by $u_t$ in $v'$, we get a subword
$v=u_{j_1} \cdots u_{j_{\ell-1}} u_t u_{j_{\ell+1}} \cdots u_{j_r}$
of $u$ of length $r$. Since we have
$$\begin{cases}
u_{j_{\ell-1}} > u_t \geq u_{j_{\ell+1}}  & \text{if} \ u_{j_{\ell+1}} < u'_t, \\
u_{j_{\ell-1}} > u_t < u_{j_{\ell+1}} < \cdots < u_{j_r} & \text{if} \ u_{j_{\ell+1}} > u'_t,
\end{cases}$$
$v$ is a hook subword of $u$ of length $r$.

Assume that $s \geq 1$. Since $\tf_i$ acts on $u_t$, we know that
for each $p=1,2,\ldots, s$, there exists $v_p$ between $u_t$ and
$u_{j_{\ell+p}}$ in $u$ such that $v_p=i$. We can assume that $v_p
\neq v_{p'}$ for $p \neq p'$. Replacing $u_{j_{\ell+p}}$ by $v_p$
for each $1 \leq p \leq s$ and $u'_t$ by $u_t$ in $v'$, we obtain a
subword $v=u_{j_1} \cdots u_{j_{\ell-1}}  u_t v_1 \cdots  v_s
u_{j_{\ell+s+1}}  \cdots  u_{j_r}$ of $u$.

If $\ell+s=r$, then we have $v=u_{j_1} \cdots u_{j_{\ell-1}}  u_t v_1 \cdots  v_s$ and it is a hook word.

Assume that $\ell+s < r$. If $u_{j_{\ell+s+1}} \in v' \downarrow$,
then $u_{j_{\ell+s+1}} \leq i$, and hence
$$u_{j_{\ell-1}} > u_t=v_1=\cdots =v_s \geq u_{j_{\ell+s+1}}.$$

If $u_{j_{\ell+s+1}} \in v' \uparrow$, then $u_{j_{\ell+s+1}} > i+1$ and hence
$$u_{j_{\ell-1}} > u_t=v_1=\cdots =v_s < u_{j_{\ell+s+1}} < \cdots < u_{j_r}.$$

In both cases, $v$ is a hook subword of $u$ of length $r$.

{\bf (ii)} Suppose $u'_t \in v' \uparrow$.
Then $j_{\ell-1} \geq 1$ and $u_{j_{\ell-1}} \leq i $.
If $u_{j_{\ell-1}} < i$,  replacing $u'_t$ by $u_t$, we obtain a hook subword
$v=u_{j_1} \cdots u_{j_{\ell-1}} u_t u_{j_{\ell+1}} \cdots u_{j_r}$ of $u$ of length $r$.

If $u_{j_{\ell-1}} = i $, then we know that there exists $u_q$ between $u_{j_{\ell-1}}$ and $u_t$ in $u$
such that $u_q= i+1$.
Replace $u'_t$ by $u_q$ in $v'$.
Then we have a subword $v=u_{j_1} \cdots u_{j_{\ell-1}} u_q u_{j_{\ell +1}} \cdots u_{j_r}$ of $u$ such that
$$u_{j_{\ell-1}}=i < u_q=i+1=u'_t < u_{j_{\ell +1}}.$$
Thus $v$ is a hook subword of $u$ of length $r$.
\vskip 1em

{\it Case 2:}
For an $i \in \{1,\ldots,n-1\}$, assume that $\te_i u = u_1 \cdots u_{t-1} u'_t u_{t+1} \cdots u_N $, where $u_t=i+1$ and $u'_t=\te_i u_t=i$.
Let $v'=u_{j_1} \cdots u_{j_{\ell-1}} u'_t u_{j_{\ell+1}} \cdots u_{j_r}$ be a hook subword of $\te_i u$.
We will show that there exists a hook subword $v$ of $u$ of length $r$.

{\bf (i)} Suppose $u'_t \in v' \downarrow$.
Then we have $u_{j_p} \geq i$ for $p=1, \ldots, j_{\ell-1}$.
Let $u'_t=u_{j_{\ell-1}}=u_{j_{\ell-2}}= \cdots =u_{j_{\ell-s}}=i$ and
$u_{j_{\ell-s-1}} > i$ for some $s \geq 0$.
Here we regard $u_{j_{\ell-s-1}}$ as the empty word, if $\ell-s=1$.

If $s=0$, then replacing $u'_t$ by $u_t$, we obtain a hook subword $v=u_{j_1} \cdots u_{j_{\ell-1}} u_t u_{j_{\ell+1}} \cdots u_{j_r}$ of $u$ of length $r$, since
$$\begin{cases}
u_{j_{\ell-1}} \geq u_t > u_{j_{\ell+1}}  & \text{if} \ u_{j_{\ell+1}} \le i , \\
u_{j_{\ell-1}} \geq u_t = u_{j_{\ell+1}} & \text{if} \ u_{j_{\ell+1}} =i+1, \\
u_{j_{\ell-1}} \geq u_t < u_{j_{\ell+1}} < \cdots < u_{j_r} & \text{if} \ u_{j_{\ell+1}} > i+1.
\end{cases}$$

Assume $s \geq 1$.
Since $\te_i$ acts on $u_t$, we know that for each $p=1,2,\ldots, s$,
there exists $v_p$ between $u_{j_{\ell-p}}$ and $u_t$ in $u$ such that $v_p=i+1$.
We can assume that $v_p \neq v_{p'}$ for $p \neq p'$.
Replace $u_{j_{\ell-p}}$ by $v_p$ for each $0 \leq p \leq s$ and $u'_t$ by $u_t$ in $v'$.
Then we get a subword $v=u_{j_1} \cdots  u_{j_{\ell-s-1}}  v_s \cdots v_1 u_t \cdots  u_{j_{\ell+1}}  \cdots  u_{j_r}$ of $u$ such that
$$u_{j_1} \ge \cdots \ge u_{j_{\ell-s-1}} \geq v_s=\cdots = v_1 = u_t. $$
Since
$$\begin{cases}
 u_t > u_{j_{\ell+1}}  & \text{if} \ u_{j_{\ell+1}} \le i , \\
 u_t = u_{j_{\ell+1}} & \text{if} \ u_{j_{\ell+1}} =i+1, \\
 u_t < u_{j_{\ell+1}} < \cdots < u_{j_r} & \text{if} \ u_{j_{\ell+1}} > i+1,
\end{cases}$$
$v$ is a hook subword of $u$ of length $r$.

{\bf (ii)} Suppose $u'_t \in v' \uparrow$.
If $u_{j_{\ell+1}} = i+1 $, then we know that there exists $u_q$ between $u_t$ and $u_{j_{\ell+1}}$ in $u$
such that $u_q= i$.
Replace $u'_t$ by $u_q$ in $v'$.
Then we have a hook subword $v=u_{j_1} \cdots u_{j_{\ell-1}} u_q u_{j_{\ell +1}} \cdots u_{j_r}$ of $u$.

If $u_{j_{\ell+1}} > i+1 $, then replacing $u'_t$ by $u_t$ in $v'$, we have a word $v=u_{j_1} \cdots u_{j_{\ell-1}} u_t u_{j_{\ell +1}} \cdots u_{j_r}$ such that
$$u_{j_{\ell-1}} < u_t < u_{j_{\ell+1}} < \cdots < u_{j_r}.$$
Hence $v$ is a hook subword of $u$ of length $r$.
\vskip 1em

{\it Case 3:}
Let $\tf_{\ol 1} u = u_1 \cdots u_{t-1} u'_t u_{t+1} \cdots u_N $, where $u_t=1$ and $u'_t=\tf_{\ol 1} u_t=2$.
We have $u_j \geq 3$ for $j \geq t+1$.
Let $v'=u_{j_1} \cdots u_{j_{\ell-1}} u'_t u_{j_{\ell+1}} \cdots u_{j_r}$ be a hook subword of $\tf_{\ol 1} u$ of length $r$.

If $u'_t \in v' \downarrow$, then we have $u_{j_{\ell+1}} \in v' \uparrow$. Replacing $u'_t$ by $u_t$, we obtain a subword $v=u_{j_1} \cdots u_{j_{\ell-1}} u_t u_{j_{\ell+1}} \cdots u_{j_r}$ of $u$ such that
$$u_{j_1} \geq \cdots \geq u_{j_{\ell -1 }} > u_t < u_{j_{\ell+1}} < \cdots < u_{j_r} $$
of length $r$.
It follows that $v$ is a hook subword of $u$.

If $u'_t \in v' \uparrow$, then $1=u_{j_{\ell-1}} \in v' \downarrow$ . Replace $u'_t$ by $u_t$ in $v'$.
Then we obtain a hook subword $v=u_{j_1} \cdots u_{j_{\ell-1}} u_t u_{j_{\ell+1}} \cdots u_{j_r}$ of $u$ of length $r$, since
$$u_{j_1} \geq \cdots \geq u_{j_{\ell -1 }} = u_t < u_{j_{\ell+1}} < \cdots < u_{j_r}.$$
\vskip 1em

{\it Case 4:}
Let $\te_{\ol 1} u = u_1 \cdots u_{t-1} u'_t u_{t+1} \cdots u_N $, where $u_t=2$ and $u'_t=\te_{\ol 1} u_t=1$.
We have $u_j \geq 3$ for $j \geq t+1$.
Let $v'=u_{j_1} \cdots u_{j_{\ell-1}} u'_t u_{j_{\ell+1}} \cdots u_{j_r}$ be a hook subword of $\te_{\ol 1} u$ of length $r$.
Note that $u'_t \in u' \downarrow$.
Replace $u'_t$ by $u_t$ in $v'$,
we obtain a hook subword $v=u_{j_1} \cdots u_{j_{\ell-1}} u_t u_{j_{\ell+1}} \cdots u_{j_r}$ of $u$ such that
$$\begin{cases}
u_{j_1} \geq \cdots \geq u_{j_{\ell-1}} < u_t < u_{j_{\ell +1}} < \cdots < u_{j_r} & \text{if} \ u_{j_{\ell-1}} =1, \\
u_{j_1} \geq \cdots \geq u_{j_{\ell-1}} \geq u_t < u_{j_{\ell +1}} < \cdots < u_{j_r} & \text{if} \ u_{j_{\ell-1}} \geq 2
\end{cases}$$
of length $r$, as desired.
\vskip 1em

%{\tt May be state one lemma/remark that to prove the minimal length axiom of an SSDT it is enough to do the ``replacing trick''?}

(b) It is straightforward to verify that $L^\la$ is a semistandard
decomposition tableau of shape $\la$ and $\wt(L^\la) = w_0 \la$. Let
$\la'=\la-\epsilon_r$. By induction on $|\la|$, we know that
$$L^{\la'}=(n-r+1)^{\la_r-1} (n-r+2) ^{\la_{r-1}} \cdots (n-k+1)
^{\la_{k}} \cdots n^{\la_1}$$ is a unique lowest weight vector in
$\B(\la')$. Note that if $u_1 u_2 \cdots u_N \in \B(\la)$ , then
$u_2 \cdots u_N \in \B(\la')$. Thus we have a crystal embedding
$$\B(\la) \hookrightarrow  \B \otimes \B(\la')$$
given by
$u_1 u_2 \cdots u_N \mapsto u_1 \otimes u_2 \cdots u_N.$

Let $j \otimes L^{\la'}$ be a lowest weight vector in $\B \otimes
\B(\la')$. Then, by Lemma \ref{le_lowest weight vector}, we get
$\la'_{n-j}  > \la'_{n-j+1} +1$ and hence $\la'_{n-j} \geq 2$. There
are three possibilities: (i) $j=n-r$ and $\la'_r \geq 2$, (ii) $j >
n-r+1$, and (iii) $j=n-r+1$.

If $j=n-r$ and $\la'_r \geq 2$, then the word $(n-r)(n-r+1)^{\la'_r}$ formed by the first $(\la'_r +1)$-many letters  of $j \otimes L^{\la'}$ is not a hook word.
Hence $j \otimes L^{\la'} \notin \B(\la)$.

If $j > n-r+1$, then $j \otimes (n-r+2)^{\la_{r-1}}$ is a hook subword of the word formed by the first $(\la_r + \la_{r-1})$-many letters of $j \tensor L^{\la'}$ and
its length is $\la_{r-1} +1$.
It follows that $j \otimes L^{\la'} \notin \B(\la)$ for $j > n-r+1$.

We conclude that $(n-r+1) \otimes L^{\la'} = L^{\la}$ is the only lowest weight vector
in $\B(\la)$. Hence $T^{\la} = S_{w_0} L^{\la}$ is the only highest weight vector in $\B(\la)$.
\vskip 1em

(c) By (a) and (b), the $\qn$-crystal $\B(\lambda)$ is connected,
which proves our assertion.
\end{proof}

\bigskip

\begin{remark} \label{rem_Serrano's SSDT}
%The simple form of the highest weight and lowest weight vectors of $\B (\lambda)$ is the main reason we use different definitions for a hook word and an SSDT that the ones given in \cite{Serra}.
One can show that the set of semistandard decomposition tableaux of a shifted shape given in \cite{Serra} (which is different from the one in this paper) also
admits a $\q(n)$-crystal structure by reading a semistandard decomposition tableau from right to left, from top to bottom.
In this setting, the highest weight vector and lowest weight vectors in the set of semistandard decomposition tableaux of
shifted shape $\lambda = (6,4,2,1)$ are given by
$$\overline{T}^{\lambda} = \young(432111,:3212,::21,:::1) \ \text{and} \ \overline{L}^{\lambda}=\young(432134,:4324,::43,:::4).$$ \end{remark}

\bigskip

\begin{example}
\bna
\item Since any word of length 2 is a hook word, we obtain  $\B \otimes \B \simeq \B(2\epsilon_1)$.
Thus the crystal in Example \ref{ex_abstract_crystal} (c) is the $\q(3)$-crystal $\B(2\epsilon_1)$.
%$$\xymatrix
%{*+{\young(11)} \ar[r]^1 \ar@{-->}[d]^{\ol 1} &
% *+{\young(21)} \ar@<-0.5ex>[d]_1 \ar@{-->}@<0.5ex>[d]^{\ol 1} \ar[r]^2&
% *+{\young(31)} \ar@<-0.5ex>[d]_1 \ar@{-->}@<0.5ex>[d]^{\ol 1} \\
% *+{\young(12)} \ar[d]^2 &
% *+{\young(22)} \ar[r]_2 &
% *+{\young(32)} \ar[d]^2 \\
% *+{\young(13)} \ar@<-0.5ex>[r]_1 \ar@{-->}@<0.5ex>[r]^{\ol 1} &
% *+{\young(23)} &
% *+{\young(33)}
% }$$

\item In Figure~\ref{fi_B(3,1,0)}, we illustrate the $\q(3)$-crystal
$\B(3\epsilon_1 +\epsilon_2)$.
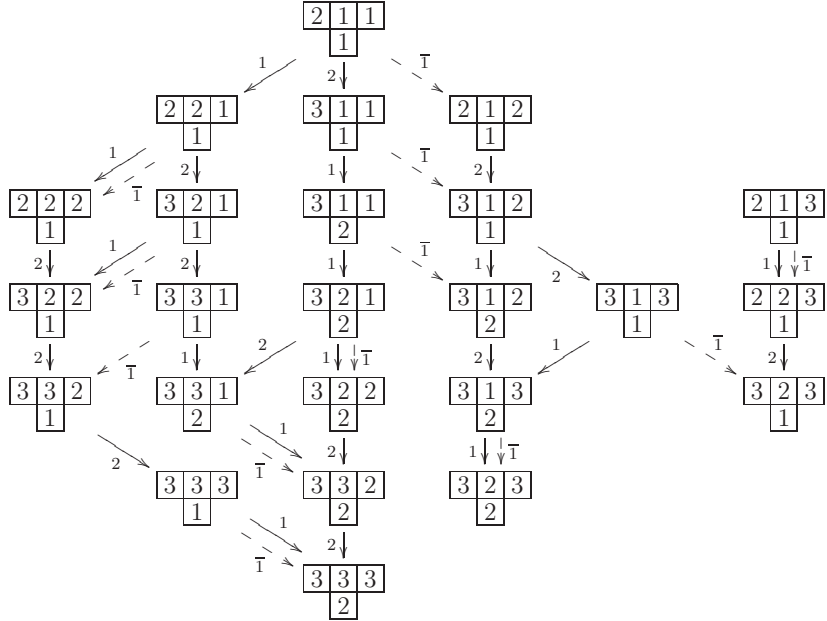
\begin{figure}[!h]
\scalebox{.8}{\xymatrix@R=1pc@H=1pc{ & &  {\young(211,:1)} \ar[dl]_1 \ar_2[d]  \ar^{\ol 1}@{-->}[dr] & & & \\
& {\young(221,:1)} \ar@<-0.5ex>_1[dl] \ar@<1ex>^{\ol 1}@{-->}[dl]
\ar_2[d] & {\young(311,:1)} \ar_1[d]\ar^{\ol 1}@{-->}[dr] &
{\young(212,:1)} \ar_2[d] & & \\
{\young(222,:1)} \ar_2[d] &
{\young(321,:1)}\ar@<-0.5ex>_1 [dl] \ar@<1ex>^{\ol 1}@{-->}[dl] \ar_2[d]
& {\young(311,:2)}  \ar_1 [d] \ar^{\ol 1}@{-->}[dr] & {\young(312,:1)}
\ar_1 [d] \ar_2[dr] & & {\young(213,:1)} \ar@<-0.5ex>_1 [d]
\ar@<1ex>^{\ol 1}@{-->}[d] \\
 {\young(322,:1)} \ar_2[d] & {\young(331,:1)} \ar^{\ol 1}@{-->}[dl] \ar_1 [d] & {\young(321,:2)} \ar@<-0.5ex>_1 [d]
\ar@<1ex>^{\ol 1}@{-->}[d] \ar_2[dl] & {\young(312,:2)}
 \ar_2[d] & {\young(313,:1)} \ar_1 [dl] \ar^{\ol 1}@{-->}[dr] & {\young(223,:1)} \ar_2[d] \\
 {\young(332,:1)} \ar_2[dr] & {\young(331,:2)} \ar@<-0.5ex>@{-->}_{\ol 1}[dr] \ar@<1ex>^{1}[dr]
 &{\young(322,:2)} \ar_2[d] & {\young(313,:2)} \ar@<-0.5ex>_1 [d] \ar@<1ex>^{\ol 1}@{-->}[d] &
 & {\young(323,:1)} \\
   & {\young(333,:1)} \ar@<-0.5ex>@{-->}_{\ol 1}[dr] \ar@<1ex>^{1}[dr]    &  {\young(332,:2)} \ar_2[d] &   {\young(323,:2)} &  &  \\
   & &{ \young(333,:2)}&& &}}
   \caption{$\B(3\epsilon_1 +\epsilon_2)$ for $n=3$.} \label{fi_B(3,1,0)}
   \end{figure}
\ee
   \end{example}

\bigskip

\subsection{The shifted Littlewood-Richardson rule}
We present an explicit combinatorial rule of decomposing the tensor
product of crystal bases of $\uqqn$-modules in the category $\Oint$.
This algorithm is an analogue of the rule of decomposing the tensor
product of crystal bases of $U_q(\gl(n))$-modules in \cite{N93} (see
also \cite{HK2002}), which coincides with the classical
\emph{Littlewood-Richardson rule}.

Let $\la$ be a strict partition. We define $\la \leftarrow j$ to be
the array of cells obtained from the shifted shape $\la$ by adding a
cell at the $j$-th row. Let us denote by $\la \leftarrow j_1
\leftarrow  \cdots  \leftarrow j_r$ the array of cells obtained from
$\la \leftarrow j_1 \leftarrow  \cdots  \leftarrow j_{r-1}$ by
adding a cell at the $j_r$-th row. We define $\B(\la \leftarrow j_1
\leftarrow \cdots \leftarrow j_r)$ to be the null crystal (i.e., the
empty set) unless $\la \leftarrow j_1 \leftarrow \cdots \leftarrow j_k$ is a shifted shape for all $k=1,\ldots,r$.

\begin{theorem}
Let $\la$ and $\mu$ be strict partitions.
Then there is a $\qn$-crystal isomorphism
\begin{align}
\nonumber &\B(\la) \otimes \B(\mu) \simeq \\
\nonumber &\soplus_{u_1 u_2 \cdots u_N \in \B(\la)} \B(\mu \leftarrow(n-u_N+1) \leftarrow (n-u_{N-1}+1) \leftarrow \cdots \leftarrow (n-u_1+1)),
\end{align}
where $N=|\la|$.
\begin{proof}
Let $u_1 \cdots u_N \in \B(\la)$.
The vector $u_1 \cdots u_N \otimes L^{\mu}$ is a lowest weight vector in $\B(\la) \otimes \B(\mu)$
if and only if $w_0 \mu + \epsilon_{u_N} + \epsilon_{u_{N-1}} + \cdots + \epsilon_{u_{k}} \in w_0 \Lambda^+$ for all $k=1, \ldots, N$ by Corollary~\ref{cor_lowest}.
This condition is equivalent to
$$\B(\mu \leftarrow(n-u_N+1) \leftarrow (n-u_{N-1}+1) \leftarrow \cdots \leftarrow (n-u_1+1)) \neq \emptyset.$$
Note that for a lowest weight vector $u_1 \cdots u_N \otimes L^{\mu}$, we have
\begin{align}
C(u_1 \cdots u_N \otimes L^{\mu}) &\simeq B(w_0(\wt(u_1 \cdots u_N ))+\mu) \nonumber \\
& \simeq \B(\mu \leftarrow(n-u_N+1) \leftarrow (n-u_{N-1}+1) \leftarrow \cdots \leftarrow (n-u_1+1)). \nonumber
\end{align}
Thus we have the desired result.
\end{proof}
\end{theorem}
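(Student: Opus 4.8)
The plan is to realize the decomposition by matching the connected components of $\B(\la) \otimes \B(\mu)$ with their lowest weight vectors. Since $\B(\la)$ and $\B(\mu)$ are sub-$\qn$-crystals of $\B^{\otimes N}$ and $\B^{\otimes M}$ (with $N = |\la|$, $M = |\mu|$) closed under the Kashiwara operators by Theorem \ref{th_SSDT_crystal}(a), their tensor product embeds as a sub-$\qn$-crystal of $\B^{\otimes(N+M)}$ through concatenation of reading words, and by associativity of $\otimes$ the Kashiwara operators agree; hence an element of $\B(\la) \otimes \B(\mu)$ is a lowest weight vector precisely when it is one in the ambient $\B^{\otimes(N+M)}$. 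As $\B(\la) \otimes \B(\mu)$ is the crystal of the module $V(\la) \otimes V(\mu)$ in the semisimple category $\Oint$ (Proposition \ref{cor:Vtens}), each connected component is the crystal of an irreducible summand, hence isomorphic to a unique $\B(\nu)$ (Theorem \ref{th_crystal}, Theorem \ref{th_SSDT_crystal}(c)) and carries exactly one lowest weight vector. Thus the whole decomposition is determined by enumerating these vectors and identifying the $\nu$ attached to each.

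First I would pin down the lowest weight vectors. Write such a vector as $u \otimes w$ with $u = u_1 \cdots u_N \in \B(\la)$ and $w \in \B(\mu)$. By Corollary \ref{cor_lowest} the concatenation is a lowest weight vector if and only if every suffix weight lies in $w_0 \Lambda^+$. The suffix conditions indexed by positions lying inside $w$ assert, again by Corollary \ref{cor_lowest} applied to $\B^{\otimes M}$, precisely that $w$ is a lowest weight vector of $\B(\mu)$; by Theorem \ref{th_SSDT_crystal}(b) this forces $w = L^\mu$. This reduction --- that the second tensor factor of any lowest weight vector must be the lowest weight vector $L^\mu$ --- is the crux of the argument and the step I expect to carry the real weight, since it is exactly what restricts the sum to vectors $u \otimes L^\mu$. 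With $w = L^\mu$ fixed and $\wt(L^\mu) = w_0 \mu$, the remaining suffix conditions (those indexed inside $u$) read
\begin{equation*}
w_0 \mu + \epsilon_{u_N} + \epsilon_{u_{N-1}} + \cdots + \epsilon_{u_k} \in w_0 \Lambda^+ \quad \text{for all } k = 1, \ldots, N,
\end{equation*}
which, upon applying $w_0$, become $\mu + \epsilon_{n-u_N+1} + \cdots + \epsilon_{n-u_k+1} \in \Lambda^+$ for every $k$. This is exactly the requirement that each intermediate shape in $\mu \leftarrow (n-u_N+1) \leftarrow \cdots \leftarrow (n-u_1+1)$ be a strict partition, i.e.\ that the associated crystal be nonempty.

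Finally I would identify the component of each lowest weight vector $v = u \otimes L^\mu$ and assemble the sum. Its weight is $\wt(v) = \wt(u) + w_0 \mu = w_0(\mu + w_0 \wt(u))$, so setting $\nu = \mu + w_0 \wt(u)$ we get $\wt(v) = w_0 \nu$; since $C(v)$ is connected with lowest weight vector $v$, Theorem \ref{th_crystal} and Theorem \ref{th_SSDT_crystal}(c) give $C(v) \simeq \B(\nu)$. Because $w_0 \wt(u) = \sum_{j=1}^N \epsilon_{n-u_j+1}$ and adjoining a box in row $j$ increments the shape weight by $\epsilon_j$, this $\nu$ is exactly $\mu \leftarrow (n-u_N+1) \leftarrow \cdots \leftarrow (n-u_1+1)$. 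Taking the disjoint union over all $u \in \B(\la)$ then yields the claimed isomorphism: the $u$ for which $u \otimes L^\mu$ fails to be a lowest weight vector are precisely those for which some intermediate shape is invalid, so by the null-crystal convention the corresponding summands are empty and drop out, while the surviving summands automatically record the multiplicity of each $\B(\nu)$ by the number of $u$ producing it.
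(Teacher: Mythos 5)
Your proof is correct and follows essentially the same route as the paper: enumerate the lowest weight vectors $u \otimes L^{\mu}$ via Corollary \ref{cor_lowest}, translate the suffix conditions by $w_0$ into the nonemptiness of the successive shifted shapes, and identify each component by its lowest weight. You are in fact more explicit than the paper on two points it leaves implicit --- that every lowest weight vector of $\B(\la)\otimes\B(\mu)$ must have $L^{\mu}$ as its second factor, and that each connected component carries a unique lowest weight vector --- which is a welcome sharpening rather than a deviation.
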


Therefore, we obtain an explicit description of {\it shifted
Littlewood-Richardson coefficients}.
\begin{corollary}
Define
\begin{equation*}
\begin{array}{rl}
\mathcal{LR}_{\la,\mu}^\nu \seteq \{u=u_1\cdots u_N \in \B(\la) \ ; & {\rm (a)} \ \wt(u)= w_0(\nu -\mu) \ \text{and} \\
  {\rm (b)} \ \mu + \epsilon_{n-u_N+1} +  \cdots &+ \epsilon_{n-u_{k}+1} \in  \Lambda^+ \
  \text{for all} \ 1 \leq k \leq N\},
\end{array}
\end{equation*}
and set $f_{\la,\mu}^{\nu} \seteq |\mathcal{LR}_{\la,\mu}^\nu|$.
Then there is a $\qn$-crystal isomorphism
$$\B(\la) \otimes \B(\mu) \simeq \soplus_{\nu \in \Lambda^+} \B(\nu)^{\oplus f_{\la,\mu}^{\nu}}.$$
\end{corollary}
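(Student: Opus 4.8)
The plan is to derive this corollary directly from the preceding theorem by regrouping its direct-sum decomposition according to isomorphism class. First I would recall that, by the convention fixing $\B(\mu \leftarrow j_1 \leftarrow \cdots \leftarrow j_r)$ to be the null crystal unless every intermediate array $\mu \leftarrow j_1 \leftarrow \cdots \leftarrow j_k$ is a genuine shifted shape, each summand of the theorem indexed by a word $u = u_1 \cdots u_N \in \B(\la)$ is either empty or isomorphic to $\B(\nu)$ for a single strict partition $\nu$. Thus the entire content of the corollary is to identify, for each fixed $\nu \in \Lambda^+$, exactly which words $u$ produce a nonzero summand isomorphic to $\B(\nu)$, and to count them.

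Next I would pin down the nonvanishing condition and the shape $\nu$ in weight-theoretic terms. Adding a cell in row $j$ changes the weight of the shape by $\epsilon_j$, so the array $\mu \leftarrow (n-u_N+1) \leftarrow \cdots \leftarrow (n-u_1+1)$ passes through the partial weights $\mu + \epsilon_{n-u_N+1} + \cdots + \epsilon_{n-u_k+1}$ as $k$ runs from $N$ down to $1$. Hence this summand is nonzero precisely when all of these partial weights lie in $\Lambda^+$, which is exactly condition (b) in the definition of $\mathcal{LR}_{\la,\mu}^\nu$. When it is nonzero, its shape is $\nu = \mu + \epsilon_{n-u_N+1} + \cdots + \epsilon_{n-u_1+1}$; using that the longest element acts by $w_0 \epsilon_i = \epsilon_{n+1-i}$ together with $\wt(u) = \epsilon_{u_1} + \cdots + \epsilon_{u_N}$, I would rewrite this as $\nu = \mu + w_0 \wt(u)$, equivalently $\wt(u) = w_0(\nu - \mu)$, which is precisely condition (a).

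Putting these two observations together, for a fixed strict partition $\nu$ the set of words $u \in \B(\la)$ whose summand is nonzero and isomorphic to $\B(\nu)$ is exactly $\mathcal{LR}_{\la,\mu}^\nu$, so the number of such summands is $f_{\la,\mu}^\nu = |\mathcal{LR}_{\la,\mu}^\nu|$. Collecting the summands of the preceding theorem according to $\nu$ then gives
$$\B(\la) \otimes \B(\mu) \simeq \soplus_{u_1 \cdots u_N \in \B(\la)} \B(\mu \leftarrow (n-u_N+1) \leftarrow \cdots \leftarrow (n-u_1+1)) \simeq \soplus_{\nu \in \Lambda^+} \B(\nu)^{\oplus f_{\la,\mu}^\nu},$$
as desired.

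Since all the substantive work — the description of lowest weight vectors via Corollary~\ref{cor_lowest} and the decomposition itself — is already carried out in the preceding theorem, there is no serious obstacle here; the one point requiring care is the weight bookkeeping, namely checking that the order of insertion (from $u_N$ to $u_1$) matches the partial sums appearing in condition (b) and that $w_0 \epsilon_i = \epsilon_{n+1-i}$ correctly converts the added-cell weight into condition (a). This is a routine verification.
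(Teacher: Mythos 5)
Your proposal is correct and follows exactly the route the paper intends: the corollary is stated there as an immediate consequence of the preceding theorem, obtained by regrouping its summands $\B(\mu \leftarrow (n-u_N+1) \leftarrow \cdots \leftarrow (n-u_1+1))$ according to the resulting shape $\nu$. Your weight bookkeeping (nonvanishing $\Leftrightarrow$ condition (b), and shape $\nu = \mu + w_0\wt(u) \Leftrightarrow$ condition (a)) is precisely the routine verification the paper leaves implicit.
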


\bigskip

\begin{example} \label{ex_decomposition}
Let $n=3$, $\la=2 \epsilon_1$ and $\mu=3 \epsilon_2 + \epsilon_1$.
For $u_1 u_2 \in \B(\la)$, if $u_2 =1$ then we have
$\mu \leftarrow (3-u_2+1) =\young(\hfill\hfill\hfill,:\hfill,::\hfill)$ so that $\B(\mu \leftarrow (3-u_2+1) \leftarrow ( 3-u_1+1))= \emptyset$.
For the other $u_1 u_2 \in \B(\lambda)$, $\mu \leftarrow (3-u_2 +1) \leftarrow (3-u_1+1)$ is given as follows:
\begin{align}
\young(\hfill\hfill\hfill,:\hfill*,::\bullet) \ (u_1u_2 =12),
&&\young(\hfill\hfill\hfill*,:\hfill,::\bullet) \ (u_1u_2 =13),
&&\young(\hfill\hfill\hfill,:\hfill*\bullet) \ (u_1u_2 =22),  \nonumber \\
\young(\hfill\hfill\hfill*,:\hfill\bullet) \ (u_1u_2 =23),
&&\young(\hfill\hfill\hfill\bullet,:\hfill*) \ (u_1u_2 =32),
&&\young(\hfill\hfill\hfill*\bullet,:\hfill) \ (u_1u_2 =33) \nonumber.
\end{align}
Here, $\young(*)$ and $\young(\bullet)$ denotes the cell added at the first and the second step, respectively.
Hence we have
\begin{align}
\mathcal{LR}_{\la,\mu}^{3 \epsilon_1+2 \epsilon_2+\epsilon_3} = \{ 12 \},
&& \mathcal{LR}_{\la,\mu}^{4 \epsilon_1 + 2 \epsilon_2 } = \{ 23, 32 \},
&& \mathcal{LR}_{\la,\mu}^{5 \epsilon_1 +  \epsilon_2 } = \{ 33 \}. \nonumber
\end{align}
It follows that
$$\B(2 \epsilon_1) \otimes \B(3 \epsilon_1 + \epsilon_2)
\simeq \B(3\epsilon_1 + 2 \epsilon_2 + \epsilon_3) \oplus \B(4 \epsilon_1+2\epsilon_2)^{\oplus 2} \oplus \B(5 \epsilon_1+ \epsilon_2). $$
\end{example}
\vskip 3mm

\section{Insertion scheme}
\subsection{Knuth relation}
Recall that there is an equivalence relation on the set of three
letter words, which is called the \emph{Knuth relation}, on
$\gl(n)$-crystals \cite{BKK}. In this section we introduce  an
equivalence relation on the set of four letter words. It is a
special case of \emph{$\qn$-crystal equivalence}.

\begin{definition}
Let $B_i$ be an abstract $\qn$-crystals and
let $b_i \in B_i$ $(i=1,2)$.
We say that
$b_1$ is \emph{$\qn$-crystal equivalent to} $b_2$ if there exists an isomorphism of crystals
$$B_1 \supseteq C(b_1) \isoto C(b_2) \subseteq B_2,$$
sending $b_1$ to $b_2$.
We denote this equivalence relation by $b_1 \sim b_2$.
\end{definition}

\begin{example}
By Corollary \ref{cor_strict reverse lattice permutation}, we know that $nnnn$, $(n-1)nnn$ and $n(n-1)nn$ exhaust all the lowest weight vectors in $\B^{\otimes 4}$.
Since $\wt((n-1)nnn) = \wt(n(n-1)nn)= w_0(3 \epsilon_1 + \epsilon_2)$,
 we have $C((n-1)nnn) \simeq C(n(n-1)nn) \simeq B(3 \epsilon_1 + \epsilon_2)$, and hence $(n-1)nnn \sim n(n-1)nn$.
This $\qn$-crystal equivalence is a special case of the following proposition.
\end{example}

\begin{prop} [{\rm queer Knuth relation}] \label{prop:Knuth relation}
Let $B_1$ and $B_2$ be the connected components
containing $1121$ and $1211$ in $\B^{\otimes 4}$, respectively.
Then there exists an abstract $\q(n)$-crystal isomorphism $\psi : B_1 \to B_2$ such that
\begin{eqnarray}
\psi(abcd)&
=acbd & \text{if} \ d \leq b \leq a < c \label{eq_Knuth_A} \\
     && \text{or} \ b < d \leq a < c  \label{eq_Knuth_B} \\
     && \text{or} \ b \leq a < d \leq c \label{eq_Knuth_C} \\
     && \text{or} \ a < b < d \leq c, \label{eq_Knuth_D}  \allowdisplaybreaks\\
&=bacd & \text{if} \ b < d \leq c \leq a \label{eq_Knuth_E} \\
     && \text{or} \ d \leq b < c \leq a, \label{eq_Knuth_F}\allowdisplaybreaks\\
&=abdc & \text{if} \ a < d \leq b < c \label{eq_Knuth_G} \\
     && \text{or} \ d \leq a < b < c. \label{eq_Knuth_H}
\end{eqnarray}

\begin{proof}
Let $\B(Y_1) = \{abcd \in \B^{\otimes 4} \, ; \, b< c \ge d\}$ and $\B(Y_2) = \{abcd \in \B^{\otimes 4} \, ; \, a< b \ge c\}$.
Then $\B(Y_1)$(respectively, $\B(Y_2)$) is the set of semistandard tableaux of skew shape $\tiny\young(::\hfill,:\hfill,\hfill\hfill)$(respectively, of skew shape $\tiny\young(::\hfill,:\hfill\hfill,\hfill)$)
and they are abstract $\qn$-crystals(\cite{GJKKK2}).
Because $1121$ is the only highest weight vector in $\B(Y_1)$, we conclude that $B_1 = \B(Y_1)$.
Similarly, $B_2=\B(Y_2)$.
It is straightforward to check that $\psi$ is a bijection between $\B(Y_1)$ and $\B(Y_2)$.

Since $B_1$ and $B_2$ are crystal bases for irreducible highest
weight $\Uq$-modules with highest weight $3 \epsilon_1 +
\epsilon_2$, there exists a unique crystal isomorphism between
them. Because the decomposition of $B(3 \epsilon_1 + \epsilon_2)$
as a $\gl(n)$-crystal is multiplicity free, it is enough to show
that $\psi$ is a $\gl(n)$-crystal isomorphism between $B_1$ and
$B_2$. For a semistandard tableau $T$ and a letter $x$ we denote
$T \leftarrow_{\gl(n)} x$ the tableau obtained by the column
insertion $x$ into $T$. For a word $w=w_1 \cdots w_N$, set $P_{\rm
col}(w) \seteq (\cdots ((w_1 \leftarrow_{\gl(n)} w_2)
\leftarrow_{\gl(n)} w_3 ) \cdots ) \leftarrow_{\gl(n)} w_N $. As
proved in \cite{BKK}, if $P_{\rm col}(w) = P_{\rm col}(w')$ then
$w$ is $\gl(n)$-crystal equivalent to $w'$ (for the definition of
the column insertion scheme and the $\gl(n)$-crystal equivalence,
see \cite{BKK}). Thus it is enough to show that $P_{\rm
col}(abcd)=P_{\rm col}(\psi(abcd))$ for all $abcd \in B_1$. For
example, if $ d \leq b \leq a < c$, then we have $ P_{\rm
col}(abcd)= \ \young(dba,c) = P_{\rm col}(acbd) $. The other cases
can be verified in a similar manner.
\end{proof}
\end{prop}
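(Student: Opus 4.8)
The plan is to pin down the two connected components $B_1 = C(1121)$ and $B_2 = C(1211)$ completely, and then to trade the hard-to-control assertion about the full $\qn$-crystal structure for a purely $\gl(n)$-crystal statement, where the column-insertion machinery of \cite{BKK} is available. First I would introduce the sets $\B(Y_1) = \{abcd \in \B^{\otimes 4} \,;\, b < c \ge d\}$ and $\B(Y_2) = \{abcd \in \B^{\otimes 4} \,;\, a < b \ge c\}$, which are precisely the reading words of the semistandard tableaux of two skew shapes of size four; by \cite{GJKKK2} each of them carries an abstract $\qn$-crystal structure. By computing the Kashiwara operators directly on these small explicit sets I would check that $1121$ is the unique $\qn$-highest weight vector of $\B(Y_1)$ and $1211$ the unique one of $\B(Y_2)$. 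Since an abstract $\qn$-crystal possessing a single highest weight vector is connected, this forces $B_1 = \B(Y_1)$ and $B_2 = \B(Y_2)$. A direct inspection of the eight inequality regimes \eqref{eq_Knuth_A}--\eqref{eq_Knuth_H} then shows that they are mutually exclusive and exhaust $\B(Y_1)$ and that the prescribed images cover $\B(Y_2)$, so that $\psi$ is at least a bijection $\B(Y_1) \to \B(Y_2)$ carrying $1121$ to $1211$.

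The second and conceptually decisive step is to reduce the problem to the even operators. Both $\wt(1121)$ and $\wt(1211)$ equal $3\epsilon_1 + \epsilon_2 \in \La^+$, so by Theorem \ref{th_crystal} the crystals $B_1$ and $B_2$ are each isomorphic to $B(3\epsilon_1 + \epsilon_2)$ and there is a \emph{unique} $\qn$-crystal isomorphism $\Psi \colon B_1 \to B_2$. This $\Psi$ is in particular an isomorphism of the underlying $\gl(n)$-crystals. Since $B(3\epsilon_1 + \epsilon_2)$ is multiplicity-free as a $\gl(n)$-crystal, its $\gl(n)$-components are pairwise non-isomorphic irreducible crystals with trivial automorphism groups; hence there is also a \emph{unique} $\gl(n)$-crystal isomorphism $B_1 \to B_2$, which must equal $\Psi$. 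It is therefore enough to prove that the explicit map $\psi$ is a $\gl(n)$-crystal isomorphism: once this is known, $\psi = \Psi$ and the $\qn$-equivariance (including compatibility with $\teone$ and $\tfone$) follows for free.

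For the remaining $\gl(n)$-statement I would invoke the result of \cite{BKK} that $P_{\rm col}(w) = P_{\rm col}(w')$ implies $w$ and $w'$ are $\gl(n)$-crystal equivalent, column insertion realizing the decomposition $\B^{\otimes 4} \simeq \soplus_\mu B_{\gl}(\mu)$ as $\gl(n)$-crystals. Combined with the bijectivity of $\psi$ established above, the whole proposition reduces to the single identity $P_{\rm col}(abcd) = P_{\rm col}(\psi(abcd))$ for every $abcd \in \B(Y_1)$. I would verify this by running through the eight regimes; for instance, when $d \le b \le a < c$ one computes $P_{\rm col}(abcd) = \young(dba,c) = P_{\rm col}(acbd)$, and the other cases are handled in the same spirit.

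I expect the main obstacle to be exactly the bookkeeping of this last step: one must confirm that the listed inequalities are genuinely disjoint and exhaustive on $\B(Y_1)$, and that for each of the three output patterns $acbd$, $bacd$, $abdc$ the column-insertion path of $\psi(abcd)$ terminates at the same tableau as that of $abcd$, even though the intermediate shapes vary from regime to regime. All the genuine conceptual work lies in the second step --- the passage from the opaque odd Kashiwara operators to the tractable even ones, made possible by the uniqueness in Theorem \ref{th_crystal} together with the multiplicity-free $\gl(n)$-decomposition --- after which only a finite, if delicate, verification remains.
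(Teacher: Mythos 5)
Your proposal is correct and follows essentially the same route as the paper's own proof: identifying $B_1$, $B_2$ with the skew-tableau crystals $\B(Y_1)$, $\B(Y_2)$ via uniqueness of highest weight vectors, reducing to a $\gl(n)$-crystal statement through the uniqueness of the $\qn$-isomorphism and the multiplicity-free $\gl(n)$-decomposition of $B(3\epsilon_1+\epsilon_2)$, and then verifying $P_{\rm col}(abcd)=P_{\rm col}(\psi(abcd))$ case by case using the column-insertion result of \cite{BKK}. Your elaboration of why multiplicity-freeness forces the unique $\gl(n)$-isomorphism to coincide with the $\qn$-one is a slightly more explicit rendering of the same argument the paper leaves terse.
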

%Since
%\begin{equation}
%P(abcd)=
%\begin{cases}
%\young(dba,c) = P(acbd) & \text{if} \ d \leq b \leq a < c, \allowdisplaybreaks\\
%\young(ba,dc) = P(acbd) & \text{if} \ b > d \leq a < c \allowdisplaybreaks\\
%                        & \text{or} \ b \leq a < d \leq c, \allowdisplaybreaks\\
%\young(ac,b,d)= P(acbd) & \text{if} \ a < b < d \leq c, \allowdisplaybreaks\\
%\young(bca,d) = P(bacd) & \text{if} \ b < b \leq c \leq c, \allowdisplaybreaks\\
%\young(dba,c) = P(bacd) & \text{if} \ d \leq b < c \leq a, \allowdisplaybreaks\\
%\young(ab,d,c)= P(abdc) & \text{if} \ a < d \leq b < c, \allowdisplaybreaks\\
%\young(da,b,c)= P(abdc) & \text{if} \ d \leq a < b < c, \allowdisplaybreaks
%\end{cases}
%\end{equation}
%we have the desired result.

\vskip 5mm

\subsection{Insertion scheme}
In this section, we present an algorithm of decomposing the tensor
product $\B(\la) \otimes \B(\mu)$, using the \emph{insertion scheme}
for semistandard decomposition tableaux.

\begin{definition} {\rm (cf. \cite{Serra}).}
Let $T$ be a semistandard decomposition tableau of shifted shape $\la$.
For $x \in \B$, we define $T \leftarrow x$ to be a filling of an array of cells
 obtained from $T$ by applying the following procedure:
\bna
\item Let $v_1=u_1 \cdots u_m$ be the reading word of the first row of $T$
such that $u_1 \geq \cdots \geq u_k < \cdots < u_m$ for some $1 \leq k \leq m$.
If $v_1 x$ is a hook word, then put $x$ at the end of the first row and stop the procedure.

\item Assume that $v_1 x$ is not a hook word.
Let $u_j$ be the leftmost element in $v_1 \uparrow$ which is greater
than or equal to $x$. Replace $u_j$ by $x$. Let $u_i$ be the
leftmost element in $v_1 \downarrow$ which is strictly less than
$u_j$. Replace $u_i$ by $u_j$. (Hence $u_i$ is bumped out of the
first row.)

\item Apply the same procedure to the second row with $u_i$ as described in $(a)$ and $(b)$.

\item Repeat the same procedure row by row from top to bottom
until we place a cell at the end of a row of $T$.
\ee
\end{definition}

We identify $T \leftarrow x$ with the word which is obtained by reading each row from left to right and then moving to the next row from bottom to top.

\bigskip

\begin{example}
Since
\begin{align}
\nonumber \young(66135) \leftarrow 2 = \young(66325,:1) &
  \qquad \young(324) \leftarrow 1 = \young(421,:3)
\end{align}
we obtain
\begin{align}
\young(66135,:324) \leftarrow 2 =\young(66325,:421,::3). \nonumber
\end{align}
%321466352
\end{example}

\vskip 1em In the rest of this section, we will show that $T
\leftarrow x$ is a semistandard decomposition tableau and it is
$\qn$-crystal equivalent to $T \otimes x$. We need the following
lemmas.

\begin{lemma} \label{le_insertion_A}
Let $y_1 < x_1 < \cdots < x_N$ for some $N \geq 1$.
Then for $z \in \B$, we have
\begin{align}
\nonumber (y_1 x_1 \cdots x_N ) z & \sim y_1 x_1 \cdots x_N z && \text{if} \ z > x_N, \\
\nonumber & \sim y_1 x_i x_1 \cdots x_{i-1} z x_{i+1} \cdots x_N & & \text{if} \ x_{i-1} < z \le x_i \ (i \geq 2), \\
\nonumber & \sim y_1 x_1 z x_2 \cdots x_N && \text{if} \ z \le x_1.
\end{align}
\begin{proof}
If $N=1$, it is trivial.

Let $N=2$.
Then we have
\begin{align}
\nonumber (y_1 x_1 x_2 ) z & \sim y_1 x_1 x_2 z && \text{if} \ z > x_2, \\
\nonumber & \sim y_1 x_2 x_1 z && \text{if} \ x_1 < z \le x_2 \ \text{by} \ \eqref{eq_Knuth_D}, \\
\nonumber & \sim y_1 x_1 z x_2&& \text{if} \ z \le x_1 \ \text{by} \ \eqref{eq_Knuth_G} \ \text{or} \ \eqref{eq_Knuth_H}.
\end{align}

Let $N \geq 3$.
If $x_N < z$, it is trivial.
For the case $x_{N-1}  < z \leq x_N$, we have
\begin{align}
\nonumber (y_1 x_1 \cdots x_{N-2} x_{N-1} x_N ) z & \sim y_1 x_1 \cdots (x_{N-2} x_N x_{N-1} z) && \text{by} \ \eqref{eq_Knuth_D} \\
\nonumber & \sim y_1 x_1 \cdots  (x_{N-3} x_N x_{N-2}  x_{N-1}) z && \text{by} \ \eqref{eq_Knuth_D}\\
\nonumber & \qquad \qquad \cdots \\
\nonumber & \sim (y_1 x_N x_1 x_2)\cdots x_{N-1} z && \text{by} \ \eqref{eq_Knuth_D}.
\end{align}

Let $z \leq x_{N-1}$. Then we have
\begin{xalignat}{3}
\nonumber (y_1 x_1 \cdots x_{N-2} x_{N-1} x_N ) z &
\sim y_1 x_1 \cdots (x_{N-2} x_{N-1} z x_N ) & \text{by} \ \eqref{eq_Knuth_G} \ \text{or} \ \eqref{eq_Knuth_H}.
\end{xalignat}
Now our assertion follows from induction on $N$.
 \end{proof}
\end{lemma}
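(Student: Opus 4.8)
The plan is to induct on $N$, using as the sole nontrivial input the queer Knuth relation of Proposition~\ref{prop:Knuth relation}, together with the elementary fact that $\qn$-crystal equivalence is a congruence for concatenation: if $w \sim w'$ then $awb \sim aw'b$ for any words $a,b$. The latter holds because a $\qn$-crystal isomorphism $C(w) \isoto C(w')$ sending $w$ to $w'$ tensors up, via $\id_a \otimes (-) \otimes \id_b$, to an isomorphism $C(awb) \isoto C(aw'b)$ sending $awb$ to $aw'b$; indeed the tensor product rules \eqref{eq1:tensor product}--\eqref{eq2:tensor product} depend only on the weights and on the individual Kashiwara operators, all of which are preserved by a crystal isomorphism. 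This congruence is exactly what lets me apply the four-letter relations \eqref{eq_Knuth_A}--\eqref{eq_Knuth_H} to a block of four consecutive letters sitting inside the longer word $y_1 x_1 \cdots x_N z$, leaving the rest of the word fixed.

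First I would settle the base case $N=1$: both alternatives collapse to the single word $y_1 x_1 z$, so there is nothing to prove. For the inductive step I fix $N \ge 2$, assume the statement for $N-1$, and distinguish three ranges for $z$. If $z > x_N$, the word $y_1 x_1 \cdots x_N z$ is already in the asserted hook form and we are done.

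Next I would treat the case $x_{N-1} < z \le x_N$ (the alternative $i = N$). Here I slide $x_N$ leftward one position at a time by repeated use of \eqref{eq_Knuth_D}: applied to the block consisting of the two letters just left of $x_N$, then $x_N$, then its right neighbour, the relation swaps $x_N$ with its immediate left neighbour. Concretely, starting from the block $x_{N-2}\,x_{N-1}\,x_N\,z$ and proceeding through blocks of the form $x_{m-1}\,x_m\,x_N\,x_{m+1}$ (with $x_0 := y_1$ and with $z$ read as the right neighbour at the top stage) down to $y_1\,x_1\,x_N\,x_2$, each block has the shape $(a,b,c,d)$ with $a<b<d\le c$ required by \eqref{eq_Knuth_D}—this uses the strict chain $x_1<\cdots<x_N$ and $z \le x_N$—and each block lies in the component $B_1 = \{abcd : b<c\ge d\}$ of Proposition~\ref{prop:Knuth relation}. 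After $N-1$ such swaps $x_N$ sits immediately after $y_1$, yielding $y_1\,x_N\,x_1\cdots x_{N-1}\,z$ as claimed. Finally, if $z \le x_{N-1}$, a single application of \eqref{eq_Knuth_G} or \eqref{eq_Knuth_H} to the block $x_{N-2}\,x_{N-1}\,x_N\,z$ (again in $B_1$, the choice of relation dictated by whether $z > x_{N-2}$) replaces it by $x_{N-2}\,x_{N-1}\,z\,x_N$, so that $(y_1x_1\cdots x_N)z \sim \bigl((y_1 x_1\cdots x_{N-1})z\bigr)x_N$. The induction hypothesis rewrites the bracketed factor $(y_1 x_1\cdots x_{N-1})z$ into the appropriate hook form—the hypothesis $z\le x_{N-1}$ places us in one of the nontrivial alternatives of the $(N-1)$ statement—and right-congruence lets me append the spectator letter $x_N$; transitivity of $\sim$ then gives precisely the asserted form.

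The step I expect to cause the most friction is bookkeeping rather than conceptual: in the sliding argument for $x_{N-1} < z \le x_N$ one must keep the indices and the moving letter straight across all $N-1$ swaps and, at each swap, verify both the strict inequality demanded by \eqref{eq_Knuth_D} and membership of the current block in $B_1$, so that Proposition~\ref{prop:Knuth relation} genuinely applies. The one structural point worth isolating cleanly at the outset is the congruence of $\sim$ under left and right concatenation, since without it the localized four-letter relations could not be invoked inside the length-$(N+2)$ word at all.
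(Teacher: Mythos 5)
Your proof is correct and follows essentially the same route as the paper's: induction on $N$, with the case $x_{N-1}<z\le x_N$ handled by repeatedly sliding $x_N$ leftward via \eqref{eq_Knuth_D} and the case $z\le x_{N-1}$ reduced to the $(N-1)$-word statement by one application of \eqref{eq_Knuth_G} or \eqref{eq_Knuth_H}. The only differences are presentational: you make explicit the concatenation-congruence of $\sim$ (which the paper uses silently) and you fold the $N=2$ case into the general inductive step rather than treating it separately.
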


\begin{lemma} \label{le_insertion_B}
Let $y_M \ge y_{M-1} \ge \cdots \ge y_1 < x$ for some $M \geq 1$.
Then for $u \in \B$, we have
\begin{align}
\nonumber (y_M \cdots y_1 x) u  & \sim y_M \cdots y_1 x u && \text{if} \ u > x, \\
\nonumber & \sim y_j y_M \cdots y_{j+1} x y_{j-1} \cdots y_1 u  && \text{if} \ u \le x, \ y_j < x \leq y_{j+1} \ (1 \le j < M ) , \\
\nonumber & \sim y_M x y_{M-1} \cdots y_1 u  && \text{if} \ u \le x, \ y_M < x.
\end{align}
\begin{proof}
If $M=1$, our assertion is trivial.

Suppose $M \ge 2$.
Let $x \ge u$ and $y_j < x $ for some $j \in \{1,2, \ldots, M \}$.
Then we have
\begin{align}
\nonumber (y_j \cdots y_2 y_1 x) u  & \sim y_j \cdots (y_2 x y_1 u)
\nonumber&& \text{by} \ \eqref{eq_Knuth_A} \ \text{or} \ \eqref{eq_Knuth_B} \ \text{or} \ \eqref{eq_Knuth_C} \\
\nonumber& \sim y_j \cdots (y_3 x y_2 y_1) u && \text{by} \ \eqref{eq_Knuth_A} \\
\nonumber&  \qquad \qquad \cdots \\
\nonumber & \sim (y_j x y_{j-1} y_{j-2}) \cdots y_1 u && \text{by} \ \eqref{eq_Knuth_A}.
\end{align}
In particular, we obtain our claim for $j=M$.

If $y_j < x \le y_{j+1}$ ($1 \le j < M $), then we have
\begin{align}
\nonumber (y_M \cdots y_j \cdots y_1 x) u  &
\sim y_M \cdots y_{j+1} (y_j x y_{j-1} y_{j-2}) \cdots y_1 u & \\
\nonumber& \sim y_M \cdots y_{j+2} (y_j y_{j+1} x y_{j-1}) y_{j-2} \cdots y_1 u &&  \text{by} \ \eqref{eq_Knuth_F} \\
\nonumber& \sim y_M \cdots (y_j y_{j+2}  y_{j+1} x ) y_{j-2} \cdots y_1 u &&  \text{by} \ \eqref{eq_Knuth_E} \\
\nonumber& \sim y_M \cdots (y_j y_{j+3}  y_{j+2} y_{j+1} ) x  y_{j-1} \cdots y_1 u &&  \text{by} \ \eqref{eq_Knuth_E} \\
\nonumber&  \qquad \qquad \cdots \\
\nonumber& \sim (y_j y_M y_{M-1} y_{M-2}) \cdots y_{j+1}  x  y_{j-1} \cdots y_1 u &&  \text{by} \ \eqref{eq_Knuth_E}.
\end{align}
\end{proof}
\end{lemma}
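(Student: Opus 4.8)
The plan is to proceed exactly as in the proof of Lemma~\ref{le_insertion_A}: I would move $(y_M \cdots y_1 x)u$ into the asserted normal form by a finite cascade of four-letter rewrites, each an instance of the queer Knuth relation of Proposition~\ref{prop:Knuth relation} applied to a consecutive window of the word. Two preliminary reductions dispose of the easy cases. When $M=1$ the three alternatives collapse to the tautology $(y_1 x)u \sim y_1 x u$, and when $u > x$ the words $(y_M\cdots y_1 x)u$ and $y_M\cdots y_1 x u$ are literally equal, so no rewrite is needed. Hence I would assume $M \ge 2$ and $u \le x$ throughout.

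The heart of the argument is an auxiliary equivalence that I would establish first: if $y_j < x$ and $u \le x$, then $(y_j \cdots y_1 x)u \sim y_j\, x\, y_{j-1}\cdots y_1\, u$. The idea is to slide $x$ one slot to the left at a time. I would begin at the tail window $y_2 y_1 x u$; since $y_1 \le y_2 < x$ and $u \le x$, this window lies in $B_1$ (it satisfies $b<c\ge d$) and falls under one of the clauses \eqref{eq_Knuth_A}, \eqref{eq_Knuth_B}, \eqref{eq_Knuth_C}, giving the move $y_2 y_1 x u \mapsto y_2 x y_1 u$. Thereafter each window $y_{i+1} y_i x y_{i-1}$ satisfies $y_{i-1} \le y_i \le y_{i+1} < x$, so \eqref{eq_Knuth_A} applies and pushes $x$ past $y_i$; iterating until $x$ rests immediately after $y_j$ yields the auxiliary claim. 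Specializing $j=M$ proves the third alternative of the lemma.

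For the mixed alternative $y_j < x \le y_{j+1}$ with $1 \le j < M$, I would first apply the auxiliary equivalence to the trailing factor $y_j\cdots y_1 x u$, rewriting the word as $y_M\cdots y_{j+1}\,(y_j\, x\, y_{j-1}\cdots y_1)\,u$. It then remains to bubble $y_j$ to the very front past $y_{j+1},\ldots,y_M$. The first swap, on the window $y_{j+1} y_j x y_{j-1}$, uses clause \eqref{eq_Knuth_F} (valid because $y_{j-1} \le y_j < x \le y_{j+1}$) to interchange $y_{j+1}$ and $y_j$; each subsequent swap moves $y_j$ one step further left by clause \eqref{eq_Knuth_E}, applicable since $y_j < x \le y_{j+1} \le \cdots \le y_M$ forces $y_j$ to be strictly smaller than every letter it passes. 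Carrying $y_j$ leftward past each of $y_{j+2},\ldots,y_M$ in turn produces $y_j\, y_M\cdots y_{j+1}\, x\, y_{j-1}\cdots y_1\, u$, the second alternative.

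I expect the main obstacle to be purely the bookkeeping of inequalities: at every window I must verify both the membership condition $b<c\ge d$ (so that Proposition~\ref{prop:Knuth relation} is applicable) and the exact ordering of the four entries (so that the intended clause among \eqref{eq_Knuth_A}--\eqref{eq_Knuth_H} is selected). The monotonicity $y_M \ge \cdots \ge y_1$ together with $y_1 < x$ and $u \le x$ is precisely what guarantees the needed clause at each step, and the dichotomy $y_i < x$ versus $y_i \ge x$ is what forces the $acbd$-relations in the first phase and the $bacd$-relations in the second. Finally, as in Lemma~\ref{le_insertion_A}, I would justify applying these rewrites to interior windows by the locality of the tensor product structure (Lemma~\ref{le_multi_tensor}): an isomorphism of connected components of a four-letter factor extends, under tensoring by fixed letters on either side, to an isomorphism of the connected components of the full words carrying one word to the other.
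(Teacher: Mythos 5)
Your proposal is correct and follows essentially the same route as the paper's proof: the same auxiliary equivalence sliding $x$ leftward via clauses \eqref{eq_Knuth_A}--\eqref{eq_Knuth_C} and then \eqref{eq_Knuth_A} (specialized at $j=M$ for the third alternative), followed by bubbling $y_j$ to the front via \eqref{eq_Knuth_F} and then repeated \eqref{eq_Knuth_E} for the mixed case. Your explicit remark on extending four-letter rewrites across tensor factors is a point the paper leaves implicit, but it does not change the argument.
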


\vskip 3mm

\begin{lemma}\label{le_SSDT<-x_A}
Let $\la_1 > \la_2$. The tensor product $\B(\la_2 \epsilon_1) \otimes \B(\la_1 \epsilon_1)$ contains $\B(\la_1 \epsilon_1 + \la_2 \epsilon_2)$
which is the only direct summand isomorphic to $B(\la_1 \epsilon_1 + \la_2 \epsilon_2)$.
\begin{proof}
If $b_1 \otimes b_2$ is a lowest weight vector in $\B(\la_2 \epsilon_1) \otimes \B(\la_1 \epsilon_1)$ of weight
$\la_2 \epsilon_{n-1} + \la_1 \epsilon_n$, then $b_2=  n^{\la_1}$ by Lemma \ref{le_lowest weight vector}.
Since $\wt(b_1) = \la_2 \epsilon_{n-1}$, we get $b_1=(n-1)^{\la_2}$.
Thus we have $b_1 \otimes b_2 = L^{\la_1 \epsilon_1 + \la_2 \epsilon_2}$ and hence $\B(\la_1 \epsilon_1 + \la_2 \epsilon_2) \subseteq \B(\la_2 \epsilon_1) \otimes \B(\la_1 \epsilon_1).$
\end{proof}
\end{lemma}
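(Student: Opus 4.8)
The plan is to reduce the statement to a count of lowest weight vectors. By the semisimplicity of $\Oint$ (Proposition~\ref{cor:Vtens}) together with Theorem~\ref{th_SSDT_crystal}, every connected component of $\B(\la_2\epsilon_1)\otimes\B(\la_1\epsilon_1)$ is isomorphic to some $\B(\nu)$ and contains a unique lowest weight vector, of weight $w_0\nu$. Hence the multiplicity of $\B(\la_1\epsilon_1+\la_2\epsilon_2)$ equals the number of lowest weight vectors of weight $w_0(\la_1\epsilon_1+\la_2\epsilon_2)=\la_2\epsilon_{n-1}+\la_1\epsilon_n$, and it suffices to show that there is exactly one such vector and that the component it generates is $\B(\la_1\epsilon_1+\la_2\epsilon_2)$.

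First I would pin down any candidate $b_1\otimes b_2$. Peeling letters off the left factor and applying Lemma~\ref{le_lowest weight vector} repeatedly (equivalently, invoking Corollary~\ref{cor_lowest}), a lowest weight vector $b_1\otimes b_2$ forces $b_2$ to be a lowest weight vector of the right factor; since $\B(\la_1\epsilon_1)$ has the unique lowest weight vector $L^{\la_1\epsilon_1}=n^{\la_1}$ by Theorem~\ref{th_SSDT_crystal}(b), we get $b_2=n^{\la_1}$, of weight $\la_1\epsilon_n$. Comparing with the prescribed weight then gives $\wt(b_1)=\la_2\epsilon_{n-1}$; as $b_1$ is a hook word of length $\la_2$, every entry must be $n-1$, so $b_1=(n-1)^{\la_2}$ is the only possibility, leaving the single candidate $(n-1)^{\la_2}\otimes n^{\la_1}$.

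It remains to confirm that this candidate is genuinely a lowest weight vector, and this is exactly where the hypothesis $\la_1>\la_2$ enters. Using Corollary~\ref{cor_lowest} one checks that every suffix-sum of weights of $(n-1)^{\la_2}\otimes n^{\la_1}$ lies in $w_0\La^+$: suffixes inside the $n^{\la_1}$ block contribute $k\epsilon_n=w_0(k\epsilon_1)$, while those reaching into the $(n-1)^{\la_2}$ block contribute $j\epsilon_{n-1}+\la_1\epsilon_n=w_0(\la_1\epsilon_1+j\epsilon_2)$ with $1\le j\le\la_2$, and $\la_1\epsilon_1+j\epsilon_2\in\La^+$ precisely because $\la_1>j$, which holds since $j\le\la_2<\la_1$. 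Thus $(n-1)^{\la_2}\otimes n^{\la_1}=L^{\la_1\epsilon_1+\la_2\epsilon_2}$ really is a lowest weight vector, and Theorem~\ref{th_SSDT_crystal} identifies its connected component with $\B(\la_1\epsilon_1+\la_2\epsilon_2)$, yielding multiplicity exactly one. The main obstacle is the uniqueness half of the count, but it dissolves once one observes that the tensor-product rule forces the right factor to be a lowest weight vector of $\B(\la_1\epsilon_1)$, where such vectors are unique; the only genuinely constrained verification is the $\la_1>\la_2$ strictness ensuring the forced candidate passes the $w_0\La^+$ test.
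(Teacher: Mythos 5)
Your proof is correct and follows essentially the same route as the paper: both arguments reduce the claim to counting lowest weight vectors of weight $w_0(\la_1\epsilon_1+\la_2\epsilon_2)$, use Lemma~\ref{le_lowest weight vector} to force $b_2=n^{\la_1}$, use a weight comparison to force $b_1=(n-1)^{\la_2}$, and identify the unique resulting candidate with $L^{\la_1\epsilon_1+\la_2\epsilon_2}$. The only difference is cosmetic: you verify explicitly via Corollary~\ref{cor_lowest} (the suffix-sum check, where $\la_1>\la_2$ enters) that this candidate is indeed a lowest weight vector, whereas the paper simply invokes the fact, already established in Theorem~\ref{th_SSDT_crystal}(b), that $L^{\nu}$ is one.
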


\begin{lemma}\label{le_SSDT<-x_B}
Let $\la_1 > \la_2$. We have the following $\qn$-crystal decomposition.
$$\B(\la_1 \epsilon_1 + \la_2 \epsilon_2) \otimes \B \simeq
B((\la_1+1)\epsilon_1 + \la_2 \epsilon_2) \oplus B(\la_1 \epsilon_1+ (\la_2+1)\epsilon_2) \oplus B(\la_1 \epsilon_1 + \la_2 \epsilon_2 + \epsilon_3),$$
where the second summand appears if and only if $\la_1 > \la_2 +1, n \ge 2$ and the third summand appears if and only if $\la_2 > 1, n \ge 3$.
The corresponding lowest weight vectors are given as follows:
\bna
\item $(n-1)^{\la_2}n^{\la_1} \otimes n = L^{(\la_1+1) \epsilon_1 + \la_2
\epsilon_2}$,
\item $(n-1)^{\la_2}n^{\la_1-2}(n-1)n \otimes n $ \quad if $\la_1 > \la_2+1, n \geq
2$,
\item $(n-1)^{\la_2-2}(n-2)(n-1)n^{\la_1-2}(n-1)n \otimes n$ \quad if $\la_2 > 1, n\geq 3$.
\ee
In particular, we have
$$\B((\la_1+1)\epsilon_1 + \la_2 \epsilon_2) \subseteq \B(\la_1 \epsilon_1 + \la_2 \epsilon_2) \otimes \B.$$
\begin{proof}
The decomposition follows from $\B \otimes \B(\la_1 \epsilon_1 + \la_2 \epsilon_2) \simeq \B(\la_1 \epsilon_1 + \la_2 \epsilon_2) \otimes \B$ and Theorem 4.6 (c) in \cite{GJKKK2}.

Note that $(n-1)^{\la_2} n^{\la_1} \in \B(\la_1 \epsilon_1 + \la_2 \epsilon_2)$ and
$(n-1)^{\la_2} n^{\la_1} \otimes n = L^{\la_1 \epsilon_1 + \la_2 \epsilon_2}$.
It follows that
$$\B((\la_1+1)\epsilon_1 + \la_2 \epsilon_2) \subseteq \B(\la_1 \epsilon_1 + \la_2 \epsilon_2) \otimes \B.$$

Assume $\la_1 > \la_2 +1$. The assertion for $\la_2=0$ and $\la_1=2$ is trivial. Let $\la_2 > 0$.
One can easily show that $(n-1)^{\la_2} n^{\la_1-2}(n-1)n \in \B(\la_1 \epsilon_1 + \la_2 \epsilon_2)$, using Proposition \ref{pro_criterion for SSDT}.
On the other hand, we have
\begin{align}
\nonumber (n-1)^{\la_2}n^{\la_1-2}(n-1)n n & \sim (n-1)^{\la_2}n^{\la_1-3} (n-1) n n n && \text{by} \ \eqref{eq_Knuth_E} \\
\nonumber & \qquad \qquad \cdots \\
\nonumber &\sim (n-1)^{\la_2+1} n^{\la_1} = L^{\la_1 \epsilon_1 + (\la_2+1) \epsilon_2} && \text{by} \ \eqref{eq_Knuth_E}.
\end{align}
Thus $(n-1)^{\la_2}n^{\la_1-2}(n-1)n \otimes n$ is a unique lowest
weight vector of weight $\la_1 \epsilon_1 + (\la_2+1) \epsilon_2$.

Assume $\la_2 > 1$.
One can show that $(n-1)^{\la_2-2}(n-2)(n-1)n^{\la_1-2}(n-1)n \in \B(\la_1 \epsilon_1 + \la_2 \epsilon_2)$
using Proposition \ref{pro_criterion for SSDT}.
On the other hand, we have
\begin{align}
\nonumber &(n-1)^{\la_2-2}(n-2)(n-1)n^{\la_1-2}(n-1) n n & \\
\nonumber &\sim (n-1)^{\la_2-2}(n-2)(n-1)n^{\la_1-3} (n-1) n n n && \text{by} \ \eqref{eq_Knuth_E} \\
\nonumber &\qquad \qquad \cdots \\
\nonumber &\sim (n-1)^{\la_2-2}(n-2)(n-1)^2 n^{\la_1} && \text{by} \ \eqref{eq_Knuth_E} \\
\nonumber &\qquad \qquad \cdots \\
\nonumber &\sim (n-2)(n-1)^{\la_2} n^{\la_1} = L^{\la_1 \epsilon_1 + \la_2 \epsilon_2 + \epsilon_3} && \text{by} \ \eqref{eq_Knuth_E}.
\end{align}
Thus $(n-1)^{\la_2-2}(n-2)(n-1)n^{\la_1-2}(n-1)n \otimes n$ is a
unique lowest weight vector of weight $\la_1 \epsilon_1 + \la_2
\epsilon_2 + \epsilon_3$.
\end{proof}
\end{lemma}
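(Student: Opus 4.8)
The plan is to determine the decomposition by enumerating the lowest weight vectors of the tensor product: by Proposition~\ref{cor:Vtens} the category $\Oint$ is semisimple, and by Theorem~\ref{th_crystal} every connected $\qn$-crystal carries a unique lowest weight vector, so the irreducible summands of $\B(\la_1\epsilon_1+\la_2\epsilon_2)\otimes\B$ are in bijection with its lowest weight vectors, and the highest weight of the component of a lowest weight vector $v$ is $w_0\wt(v)$. Writing $\mu=\la_1\epsilon_1+\la_2\epsilon_2$, semisimplicity gives $\V\otimes V(\mu)\simeq V(\mu)\otimes\V$, hence a $\qn$-crystal isomorphism $\B\otimes\B(\mu)\simeq\B(\mu)\otimes\B$; I would first analyze the more convenient side $\B\otimes\B(\mu)$, where the single vector factor sits on the left so that Lemma~\ref{le_lowest weight vector} applies verbatim.

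By Lemma~\ref{le_lowest weight vector}, a vector $a\otimes b\in\B\otimes\B(\mu)$ is a lowest weight vector exactly when $b$ is a lowest weight vector of $\B(\mu)$ and $\epsilon_a+\wt b\in w_0\La^{+}$. By Theorem~\ref{th_SSDT_crystal}(b) the only lowest weight vector of $\B(\mu)$ is $L^{\mu}$, with $\wt(L^{\mu})=w_0\mu$, so after applying $w_0$ (and using $w_0\epsilon_a=\epsilon_{n-a+1}$) the condition becomes $\mu+\epsilon_{n-a+1}\in\La^{+}$. Running through the possible rows $n-a+1$ then leaves exactly three admissible choices: $n-a+1=1$ yields $(\la_1+1)\epsilon_1+\la_2\epsilon_2$ unconditionally; $n-a+1=2$ yields $\la_1\epsilon_1+(\la_2+1)\epsilon_2$, which lies in $\La^{+}$ iff $\la_1>\la_2+1$ and requires $a=n-1\ge1$, i.e.\ $n\ge2$; $n-a+1=3$ yields $\la_1\epsilon_1+\la_2\epsilon_2+\epsilon_3$, admissible iff $\la_2>1$ and requiring $n\ge3$; whereas $\mu+\epsilon_j$ for $j\ge4$ is not even weakly decreasing and so never lies in $\La^{+}$. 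Taking highest weights $w_0\wt(a\otimes L^{\mu})=\mu+\epsilon_{n-a+1}$ reproduces the three summands together with their precise existence conditions.

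It remains to identify the explicit lowest weight vectors on the original side $\B(\mu)\otimes\B$, whose isomorphism with $\B\otimes\B(\mu)$ is non-constructive. For this I would directly exhibit the candidates in (a)--(c) and verify two things for each: that the proposed left factor is a genuine semistandard decomposition tableau of shape $\mu$, checked by ruling out the three forbidden patterns of Proposition~\ref{pro_criterion for SSDT}; and that the whole word $w\otimes n$ is a lowest weight vector of the prescribed weight, which by multiplicity-freeness of each summand suffices to pin it down. Case (a) is immediate, since $L^{\mu}\otimes n$ reads as $(n-1)^{\la_2}n^{\la_1+1}=L^{(\la_1+1)\epsilon_1+\la_2\epsilon_2}$. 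For (b) and (c) I would invoke the queer Knuth relation of Proposition~\ref{prop:Knuth relation}, applying \eqref{eq_Knuth_E} repeatedly to slide the isolated $n-1$ leftward through the block of $n$'s (and, in (c), then the isolated $n-2$ leftward through the block of $n-1$'s) until the word is transformed into the standard lowest weight vector $L^{\nu}$; the resulting $\qn$-crystal equivalence to $L^{\nu}$ simultaneously certifies the lowest-weight property and identifies the summand.

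The decomposition and the weight bookkeeping are essentially free once Lemma~\ref{le_lowest weight vector} is applied, so the main obstacle is the explicit verification in the last step. The delicate points are confirming via Proposition~\ref{pro_criterion for SSDT} that the somewhat ad hoc words $(n-1)^{\la_2}n^{\la_1-2}(n-1)n$ and $(n-1)^{\la_2-2}(n-2)(n-1)n^{\la_1-2}(n-1)n$ really are tableaux of shape $\mu$, and arranging the Knuth chains so that each elementary move genuinely matches the hypothesis $b<d\le c\le a$ of \eqref{eq_Knuth_E}. One should also dispatch the degenerate cases $\la_2\in\{0,1\}$ and small $\la_1$, where some blocks collapse to the empty word; these are routine but must be treated so that the claimed equalities with $L^{\nu}$ remain valid.
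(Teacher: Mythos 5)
Your proposal is correct and takes essentially the same route as the paper: the paper also derives the abstract decomposition by passing to $\B\otimes\B(\la_1\epsilon_1+\la_2\epsilon_2)$ through the commutation isomorphism, and it verifies the explicit vectors (a)--(c) exactly as you propose, checking membership in $\B(\la_1\epsilon_1+\la_2\epsilon_2)$ with Proposition~\ref{pro_criterion for SSDT} and sliding the displaced letters leftward by repeated applications of \eqref{eq_Knuth_E} until the word becomes $L^{\nu}$. The only difference is bookkeeping: you enumerate lowest weight vectors $a\otimes L^{\mu}$ via Lemma~\ref{le_lowest weight vector}, while the paper cites the highest-weight characterization (Theorem 4.6(c) of \cite{GJKKK2}, i.e.\ Proposition~\ref{prop_char.h.w}) from which that lemma is itself derived; as a side note, your identity $L^{\mu}\otimes n=L^{(\la_1+1)\epsilon_1+\la_2\epsilon_2}$ corrects a typo in the paper's proof, which writes $L^{\la_1\epsilon_1+\la_2\epsilon_2}$ at that point.
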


\begin{lemma}\label{le_SSDT<-x_C}
Let $\la_1 > \la_2 +1$. Then
$\B \otimes \B(\la_2 \epsilon_1) \otimes \B(\la_1\epsilon_1)$
contains $\B(\la_1 \epsilon_1 + (\la_2+1) \epsilon_2)$
which is the only direct summand %which is
isomorphic to $B(\la_1 \epsilon_1 + (\la_2+1) \epsilon_2)$.
Moreover, we have
$$
 \B(\la_1 \epsilon_1 + (\la_2+1) \epsilon_2) \subseteq \B \otimes \B(\la_1 \epsilon_1 + \la_2 \epsilon_2)
\subseteq \B \otimes \B(\la_2 \epsilon_1) \otimes \B(\la_1\epsilon_1).
$$
\begin{proof}
Let $a \otimes b_1 \otimes b_2 $ be a lowest weight vector in $\B
\otimes \B(\la_2 \epsilon_1) \otimes \B(\la_1 \epsilon_1)$ of weight
$(\la_2+1) \epsilon_{n-1} + \la_1 \epsilon_n$. Then we have $b_2 =
n^{\la_1}$ by Lemma \ref{le_lowest weight vector}. Comparing the
weights, we get $b_1 =(n-1) ^{\la_2}$ and $a=n-1$. Hence $a\otimes
b_1\otimes b_2 = L^{\la_1 \epsilon_1 + (\la_2+1) \epsilon_2}$ and
$b_1 \otimes b_2 = L^{\la_1 \epsilon_1 + \la_2 \epsilon_2}$.
\end{proof}
\end{lemma}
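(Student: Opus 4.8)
The plan is to reduce the ``unique direct summand'' assertion to a count of lowest weight vectors. Since $\Oint$ is semisimple (Proposition~\ref{cor:Vtens}) and every connected component $B(\nu)$ carries a unique lowest weight vector, of weight $w_0(\nu)$ (Theorem~\ref{th_crystal}), the multiplicity of $B(\la_1 \epsilon_1 + (\la_2+1)\epsilon_2)$ in $\B \otimes \B(\la_2 \epsilon_1) \otimes \B(\la_1 \epsilon_1)$ equals the number of lowest weight vectors there of weight $w_0(\la_1 \epsilon_1 + (\la_2+1)\epsilon_2) = (\la_2+1)\epsilon_{n-1} + \la_1 \epsilon_n$. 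So it suffices to show that there is exactly one such vector and that it is $L^{\la_1 \epsilon_1 + (\la_2+1)\epsilon_2}$.

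First I would take an arbitrary lowest weight vector $a \otimes b_1 \otimes b_2$ of that weight and peel off boxes from the left. Writing the reading words of $b_1$ and $b_2$ as sequences of single boxes and applying Lemma~\ref{le_lowest weight vector} repeatedly (equivalently, invoking Corollary~\ref{cor_lowest} directly on the whole reading word), one sees that the tail $b_2$ must itself be a lowest weight vector in $\B^{\otimes \la_1}$; since $b_2 \in \B(\la_1 \epsilon_1)$, which is connected with unique lowest weight vector $n^{\la_1}$, we get $b_2 = n^{\la_1}$. A weight computation then finishes the identification: since $\wt(b_2) = \la_1 \epsilon_n$, the remaining letters $a$ and those of $b_1$ must account for $(\la_2+1)\epsilon_{n-1}$, and as there are exactly $\la_2+1$ of them they are all equal to $n-1$, that is, $a = n-1$ and $b_1 = (n-1)^{\la_2}$. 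Thus the only candidate is $(n-1)^{\la_2+1} n^{\la_1}$, which because $\la_1 > \la_2 + 1$ is precisely $L^{\la_1 \epsilon_1 + (\la_2+1)\epsilon_2}$; a one-line check via Corollary~\ref{cor_strict reverse lattice permutation} confirms that it really is a lowest weight vector, so the multiplicity is exactly one.

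For the displayed chain of inclusions I would assemble two already-proven facts. The outer inclusion $\B \otimes \B(\la_1 \epsilon_1 + \la_2 \epsilon_2) \subseteq \B \otimes \B(\la_2 \epsilon_1) \otimes \B(\la_1 \epsilon_1)$ follows by tensoring the containment $\B(\la_1 \epsilon_1 + \la_2 \epsilon_2) \subseteq \B(\la_2 \epsilon_1) \otimes \B(\la_1 \epsilon_1)$ of Lemma~\ref{le_SSDT<-x_A} with $\B$ on the left, which preserves subcrystals because the Kashiwara operators on $\B \otimes C$ act either on the left tensor factor or inside $C$. The inner inclusion $\B(\la_1 \epsilon_1 + (\la_2+1)\epsilon_2) \subseteq \B \otimes \B(\la_1 \epsilon_1 + \la_2 \epsilon_2)$ follows from Lemma~\ref{le_SSDT<-x_B}: that lemma exhibits $B(\la_1 \epsilon_1 + (\la_2+1)\epsilon_2)$ as a summand of $\B(\la_1 \epsilon_1 + \la_2 \epsilon_2) \otimes \B$ exactly under the hypothesis $\la_1 > \la_2 + 1$, and the crystal isomorphism $\B \otimes \B(\la_1 \epsilon_1 + \la_2 \epsilon_2) \simeq \B(\la_1 \epsilon_1 + \la_2 \epsilon_2) \otimes \B$ transports it to the left factor.

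The main obstacle is the identification step rather than the inclusions: one must be sure that the combined constraints — the strict reverse lattice (lowest weight) condition read off the entire word, together with the single-row hook-word structure of $b_1$ and $b_2$ — leave no freedom beyond $(n-1)^{\la_2+1} n^{\la_1}$. The clean way to control this is to process the reading word strictly from the right, so that the suffix ending in $b_2$ forces $b_2 = n^{\la_1}$ by Corollary~\ref{cor_lowest} before any weight bookkeeping is attempted; the hypothesis $\la_1 > \la_2 + 1$ is precisely what guarantees that the resulting shape $(\la_1, \la_2+1)$ is a genuine strict partition, so that $L^{\la_1 \epsilon_1 + (\la_2+1)\epsilon_2}$ is defined and the summand is nonzero.
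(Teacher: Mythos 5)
Your treatment of the main assertion is the paper's own argument: take a lowest weight vector $a \otimes b_1 \otimes b_2$ of weight $(\la_2+1)\epsilon_{n-1}+\la_1\epsilon_n$, force $b_2 = n^{\la_1}$ by Lemma~\ref{le_lowest weight vector}, and let the weight count force $a = n-1$, $b_1 = (n-1)^{\la_2}$; existence plus uniqueness of this lowest weight vector then give the first claim exactly as in the paper.

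The divergence is in the ``Moreover'' chain, and there your shortcut has a step you must make explicit. The paper reads the inner inclusion directly off the identification just made: since $b_1 \otimes b_2 = (n-1)^{\la_2}n^{\la_1} = L^{\la_1\epsilon_1+\la_2\epsilon_2}$, the unique lowest weight vector lies in $\B \otimes \B(\la_1\epsilon_1+\la_2\epsilon_2)$, and this set is closed under the Kashiwara operators, so it contains the whole connected component $\B(\la_1\epsilon_1+(\la_2+1)\epsilon_2)$. You instead invoke Lemma~\ref{le_SSDT<-x_B} together with the commutativity isomorphism $\B \otimes \B(\la_1\epsilon_1+\la_2\epsilon_2) \simeq \B(\la_1\epsilon_1+\la_2\epsilon_2) \otimes \B$. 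That isomorphism is not the identity on words, so ``transporting the summand to the left factor'' yields only a direct summand \emph{abstractly isomorphic} to $B(\la_1\epsilon_1+(\la_2+1)\epsilon_2)$, whereas the displayed inclusion---and its later use in Proposition~\ref{pro_T<-x}, where one must conclude that certain words \emph{are} semistandard decomposition tableaux---concerns the literal subset $\B(\la_1\epsilon_1+(\la_2+1)\epsilon_2)$ of $\B^{\otimes(1+\la_2+\la_1)}$. The distinction is not pedantic: Lemma~\ref{le_SSDT<-x_B} itself shows that inside $\B(\la_1\epsilon_1+\la_2\epsilon_2)\otimes\B$ the summand isomorphic to $B(\la_1\epsilon_1+(\la_2+1)\epsilon_2)$ has lowest weight vector $(n-1)^{\la_2}n^{\la_1-2}(n-1)n \otimes n$, which is \emph{not} $L^{\la_1\epsilon_1+(\la_2+1)\epsilon_2}$, so a transported component cannot simply be declared to equal the set of reading words. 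Your argument does close up, because you proved uniqueness first: the transported component is a connected component of $\B \otimes \B(\la_2\epsilon_1) \otimes \B(\la_1\epsilon_1)$ isomorphic to $B(\la_1\epsilon_1+(\la_2+1)\epsilon_2)$, hence equals the literal $\B(\la_1\epsilon_1+(\la_2+1)\epsilon_2)$ by your first part. But that appeal to uniqueness has to be stated; better still, drop the detour and use what your own identification already gives, namely $b_1 \otimes b_2 = L^{\la_1\epsilon_1+\la_2\epsilon_2}$.
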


\begin{lemma}\label{le_SSDT<-x_D}
If $\la_1 > \la_2 > 1$, then $\B \otimes \B(\la_2 \epsilon_1)
\otimes \B(\la_1\epsilon_1)$ contains $\B(\la_1 \epsilon_1 +
\la_2\epsilon_2 + \epsilon_3)$ which is the only direct summand %which is
isomorphic to $B(\la_1 \epsilon_1 + \la_2 \epsilon_2 +
\epsilon_3)$. Moreover, we have
$$
 \B(\la_1 \epsilon_1 + \la_2 \epsilon_2 + \epsilon_3) \subseteq \B \otimes \B(\la_1 \epsilon_1 + \la_2 \epsilon_2)
\subseteq \B \otimes \B(\la_2 \epsilon_1) \otimes \B(\la_1\epsilon_1).
$$
\begin{proof}
Let $a \otimes b_1 \otimes b_2 $ be a lowest weight vector in $\B
\otimes \B(\la_2 \epsilon_1) \otimes \B(\la_1 \epsilon_1)$ of weight
$\epsilon_{n-2} + \la_2 \epsilon_{n-1} + \la_1 \epsilon_n$. Then we
have $b_2 = n^{\la_1}$ by Lemma \ref{le_lowest weight vector}. Hence
$a=n-1$ or $n-2$.

If $a=n-1$, then $b_1= (n-1)^{m_1} (n-2) (n-1)^{m_2}$ for some
nonnegative integers $m_1$ and $m_2$ such that $m_1 +m_2=\la_2-1$.
Since $(n-2)(n-1)^{m_2} \otimes n^{\la_1}$ is a lowest weight vector
by Lemma \ref{le_lowest weight vector},  we have $m_2 > 1$ by
Corollary \ref{cor_strict reverse lattice permutation}. Then
$b_1=(n-1)^{m_1} (n-2) (n-1)^{m_2} $ is not a hook word, which is a
contradiction.

If $a=n-2$, then $b_1=(n-1)^{\la_2}$ and $a \otimes b_1 \otimes b_2
= (n-2) (n-1)^{\la_2} n^{\la_1} = L^{\la_1 \epsilon_1 + \la_2
\epsilon_2 + \epsilon_3}$ and $b_1 \otimes b_2 = L^{\la_1 \epsilon_1
+ \la_2 \epsilon_2}$, as desired.
\end{proof}
\end{lemma}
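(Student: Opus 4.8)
The plan is to prove the multiplicity-one assertion by counting lowest weight vectors, following the same strategy as in Lemmas~\ref{le_SSDT<-x_A}--\ref{le_SSDT<-x_C}. Since $\Oint$ is semisimple (Proposition~\ref{cor:Vtens}), the multiplicity of $B(\la_1\epsilon_1+\la_2\epsilon_2+\epsilon_3)$ in $\B\otimes\B(\la_2\epsilon_1)\otimes\B(\la_1\epsilon_1)$ equals the number of lowest weight vectors of weight $w_0(\la_1\epsilon_1+\la_2\epsilon_2+\epsilon_3)=\epsilon_{n-2}+\la_2\epsilon_{n-1}+\la_1\epsilon_n$. So the main task is to show that there is exactly one such vector and that it equals $L^{\la_1\epsilon_1+\la_2\epsilon_2+\epsilon_3}=(n-2)(n-1)^{\la_2}n^{\la_1}$.

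First I would write a candidate as $a\otimes b_1\otimes b_2$ with $a\in\B$, $b_1\in\B(\la_2\epsilon_1)$, $b_2\in\B(\la_1\epsilon_1)$, and strip off tensor factors from the left by repeated use of Lemma~\ref{le_lowest weight vector}. This shows that $b_2$ is a lowest weight vector of $\B(\la_1\epsilon_1)$; since that crystal has a unique lowest weight vector $L^{\la_1\epsilon_1}=n^{\la_1}$ by Theorem~\ref{th_SSDT_crystal}(b), we get $b_2=n^{\la_1}$. Subtracting $\wt(b_2)$ leaves $\wt(a)+\wt(b_1)=\epsilon_{n-2}+\la_2\epsilon_{n-1}$, and since $\wt(a)=\epsilon_j$ while $\wt(b_1)$ must have nonnegative coefficients, the only possibilities are $a=n-1$ and $a=n-2$.

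The step I expect to be the crux is excluding $a=n-1$. In that case $b_1$ is a length-$\la_2$ hook word built from one $(n-2)$ and $\la_2-1$ copies of $(n-1)$, say $b_1=(n-1)^{m_1}(n-2)(n-1)^{m_2}$ with $m_1+m_2=\la_2-1$. The idea is to play the lattice condition against the hook condition: applying Corollary~\ref{cor_strict reverse lattice permutation} to the suffix $(n-2)(n-1)^{m_2}n^{\la_1}$ forces $m_2\ge 2$, but then the increasing part of $b_1$ would contain two equal letters $(n-1)$, so $b_1$ fails to be a hook word --- a contradiction. The remaining case $a=n-2$ gives $b_1=(n-1)^{\la_2}$, hence the unique lowest weight vector $(n-2)(n-1)^{\la_2}n^{\la_1}=L^{\la_1\epsilon_1+\la_2\epsilon_2+\epsilon_3}$, establishing that the multiplicity is exactly one.

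Finally, for the chain of inclusions I would avoid any further counting. From the unique lowest weight vector I read off that $b_1\otimes b_2=(n-1)^{\la_2}n^{\la_1}=L^{\la_1\epsilon_1+\la_2\epsilon_2}$, which lies in $\B(\la_1\epsilon_1+\la_2\epsilon_2)$ by Lemma~\ref{le_SSDT<-x_A}; hence the connected component $C(a\otimes b_1\otimes b_2)$ sits inside $\B\otimes\B(\la_1\epsilon_1+\la_2\epsilon_2)$, which gives the first inclusion. The second inclusion is immediate by tensoring the containment $\B(\la_1\epsilon_1+\la_2\epsilon_2)\subseteq\B(\la_2\epsilon_1)\otimes\B(\la_1\epsilon_1)$ of Lemma~\ref{le_SSDT<-x_A} with $\B$ on the left.
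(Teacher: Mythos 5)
Your proposal is correct and follows essentially the same route as the paper's own proof: strip tensor factors via Lemma~\ref{le_lowest weight vector} to force $b_2=n^{\la_1}$, reduce to $a\in\{n-1,n-2\}$, eliminate $a=n-1$ by playing the strict reverse lattice permutation condition (which forces $m_2>1$) against the hook-word condition on $b_1$, and identify the unique lowest weight vector as $L^{\la_1\epsilon_1+\la_2\epsilon_2+\epsilon_3}$ with $b_1\otimes b_2=L^{\la_1\epsilon_1+\la_2\epsilon_2}$. The extra details you supply — the semisimplicity/multiplicity-counting framing and the explicit derivation of the inclusion chain from Lemma~\ref{le_SSDT<-x_A} — are exactly what the paper leaves implicit.
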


\vskip 3mm

\begin{lemma} \label{le_SSDT<-x_E}
Let $\la_1 > \la_2 +1$. Then $\B((\la_2+1) \epsilon_1) \otimes \B(\la_1 \epsilon_1)$
 does not have direct summands isomorphic to $B(\la_1 \epsilon_1 + \la_2 \epsilon_2 + \epsilon_3)$.
\begin{proof}
If $\la_2 \leq 1$, there is nothing to prove. Let $\la_2 > 1$. If
$b_1 \otimes b_2$ is a lowest weight vector of weight
$\epsilon_{n-2} + \la_2 \epsilon_{n-1} + \la_1 \epsilon_n$, then
$b_2= n^{\la_1}$, by Lemma \ref{le_lowest weight vector}. Then $b_1=
(n-1)^{m_1} (n-2) (n-1)^{m_2}$ for some nonnegative integers $m_1$
and $m_2$ with $m_1 + m_2 = \la_2$. By Lemma \ref{le_lowest weight
vector},  $(n-2) (n-1)^{m_2} \otimes n^{\la_1}$ is a lowest weight
vector, and hence $m_2 > 1$ by Corollary \ref{cor_strict reverse
lattice permutation}. Then $b_1=(n-1)^{m_1}(n-2)(n-1)^{m_2}$ is not
a hook word, which is a contradiction.
\end{proof}
\end{lemma}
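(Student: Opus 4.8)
The plan is to translate the absence of a direct summand into the nonexistence of a lowest weight vector. By Theorem~\ref{th_crystal} together with the semisimplicity of the crystal category, a direct summand of $\B((\la_2+1)\epsilon_1) \otimes \B(\la_1\epsilon_1)$ isomorphic to $B(\la_1\epsilon_1 + \la_2\epsilon_2 + \epsilon_3)$ is the same datum as a lowest weight vector of weight $w_0(\la_1\epsilon_1 + \la_2\epsilon_2 + \epsilon_3) = \epsilon_{n-2} + \la_2\epsilon_{n-1} + \la_1\epsilon_n$. Thus it suffices to show that no such lowest weight vector exists. First I would dispose of the degenerate range: if $\la_2 \le 1$, then $\la_1\epsilon_1 + \la_2\epsilon_2 + \epsilon_3$ fails to be a strict partition and $B(\la_1\epsilon_1 + \la_2\epsilon_2 + \epsilon_3)$ is undefined, so there is nothing to prove. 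I may therefore assume $\la_2 > 1$.

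Next I would pin down the shape of a hypothetical lowest weight vector $b_1 \otimes b_2$ by weight bookkeeping. The target weight involves exactly $\la_1$ copies of $n$, and $b_2 \in \B(\la_1\epsilon_1)$ occupies the last $\la_1$ slots of the reading word; applying Corollary~\ref{cor_strict reverse lattice permutation} to the suffix consisting of the $b_2$-part (every suffix of a strict reverse lattice permutation is again one) forces $b_2$ to be a lowest weight vector of $\B(\la_1\epsilon_1)$, whence $b_2 = n^{\la_1} = L^{\la_1\epsilon_1}$. The remaining letters, a single $n-2$ and $\la_2$ copies of $n-1$, must then make up $b_1$, which has length exactly $\la_2 + 1$. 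Hence $b_1 = (n-1)^{m_1}(n-2)(n-1)^{m_2}$ for some $m_1, m_2 \ge 0$ with $m_1 + m_2 = \la_2$.

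The heart of the argument is a clash between two requirements on $m_2$. On one side, the lowest weight condition is restrictive: reading the suffix $(n-2)(n-1)^{m_2} n^{\la_1}$ of $b_1 b_2$ and invoking Corollary~\ref{cor_strict reverse lattice permutation}, the number of $n-1$'s must strictly exceed the number of $n-2$'s once $n-2$ appears, which forces $m_2 > 1$. On the other side, membership $b_1 \in \B((\la_2+1)\epsilon_1)$ demands that $b_1$ be a one-row hook word; but after the unique minimal letter $n-2$ the increasing part of $b_1$ is $(n-1)^{m_2}$, which is strictly increasing only if $m_2 \le 1$. These two conditions cannot hold simultaneously, so no lowest weight vector of weight $\epsilon_{n-2} + \la_2\epsilon_{n-1} + \la_1\epsilon_n$ exists, and the summand does not occur.

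The only real care needed, and the part I would flag as the main obstacle, is the bookkeeping that simultaneously forces $b_2 = n^{\la_1}$ and the precise two-letter form of $b_1$ before the hook-word versus lattice-permutation conflict can be read off; once these are in place the contradiction $m_2 > 1$ and $m_2 \le 1$ is immediate. This is the mirror image of the argument in Lemma~\ref{le_SSDT<-x_D}, the sole difference being that here the letter $n-2$ must sit \emph{inside} the single row $b_1$ rather than be carried by a separate tensor factor $a \in \B$; it is exactly this confinement to one row that produces the hook-word obstruction and kills the summand.
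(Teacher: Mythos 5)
Your proposal is correct and takes essentially the same route as the paper's own proof: reduce to the nonexistence of a lowest weight vector of weight $\epsilon_{n-2}+\la_2\epsilon_{n-1}+\la_1\epsilon_n$, force $b_2=n^{\la_1}$ and $b_1=(n-1)^{m_1}(n-2)(n-1)^{m_2}$ with $m_1+m_2=\la_2$, and then play the strict-reverse-lattice-permutation condition on the suffix $(n-2)(n-1)^{m_2}n^{\la_1}$ (which gives $m_2>1$) against the hook-word condition on $b_1$ (which gives $m_2\le 1$). The only cosmetic difference is that you invoke Corollary~\ref{cor_strict reverse lattice permutation} to pin down $b_2$ where the paper cites Lemma~\ref{le_lowest weight vector}; these are interchangeable here.
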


Now we are ready to prove the main result of this section.

\begin{prop} \label{pro_T<-x}
Let $T$ be a semistandard decomposition tableau of shifted shape $\lambda$ and let $x \in \B$.
Then we have
\bna
\item $T \otimes x \sim T \leftarrow x$,
\item $T \leftarrow x$ is a semistandard decomposition tableau of shifted shape $\lambda +\varepsilon_j$ for some $j=1,\ldots,n$.
% such that $\lambda + \varepsilon_j$ is a strict partition.
\ee
\begin{proof}
(a) Let $v_1$ be the reading word of the first row of $T$.
If $v_1x$ is a hook word,
then we have $v_1 \leftarrow x = v_1 \otimes x$ and hence $T \otimes x \sim T \leftarrow x$.

Assume that $v_1 x$ is not a hook word and $v_1 \leftarrow x = y_{j_1} \otimes v'_1 $,
 where $v'_1$ is the hook word of length $\la_1$ obtained from $v_1$ by inserting $x$ into $v_1$ and
 $y_{j_1}$ is the letter bumped out of $v_1$.
Combining Lemma \ref{le_insertion_A} and Lemma \ref{le_insertion_B}, we obtain
$$v_1 \otimes x \sim v_1 \leftarrow x=y_{j_1} \otimes v'_1.$$
Let $v_2$ be the reading word of the second row of $T$.
If $v_2 y_{j_1}$ is a hook word,
then we have
$$v_2 (v_1 \otimes x)  \sim v_2 (v_1 \leftarrow x) = v_2 \otimes (y_{j_1} \otimes v'_1) = (v_2v_1) \leftarrow x,$$
and hence $$T \otimes x \sim T \leftarrow x.$$
If $v_2 y_{j_1}$ is not a hook word, then by inserting $y_{j_1}$ into $v_2$,
we obtain $y_{j_2}$ and $v'_2$ such that $v_2 \leftarrow y_{j_1} \sim y_{j_2} \otimes v'_2$.

Repeating this procedure row by row, we obtain the desired result.
\vskip 1em (b) By the definition of the insertion scheme, it
suffices to show that $b_1 \otimes b_2 \leftarrow x$ is a
semistandard decomposition tableau for any $x \in \B$ and $b_1
\otimes b_2 \in \B(\la_1 \epsilon_1 + \la_2 \epsilon_2)$ with $\la_1
> \la_2$. Note that $\B(\la_1 \epsilon_1 + \la_2 \epsilon_2)
\subseteq \B(\la_2 \epsilon_1) \otimes \B(\la_1 \epsilon_1)$ by
Lemma \ref{le_SSDT<-x_A}. It is straightforward to verify our claim
for $\la_2 =0$. Let $\la_2 > 0$.

If $b_2 \otimes x $ is a hook word, then $b_1 \otimes b_2 \otimes x \in \B((\la_1+1) \epsilon_1 + \la_2 \epsilon_2)$.
Indeed, if there is a hook subword $u$ of $b_1 \otimes b_2 \otimes x$ of length greater than $ \la_1+1$,
then $u$ must contain $x$.
Since $u - \{x\}$ is a hook subword of $b_1 \otimes b_2$ of length greater than $ \la_1$,
we have a contradiction.

Suppose that $b_2 \otimes x$ is not a hook word.
We have $b_2 \otimes x \sim y \otimes b'_2$, where $b'_2$ is the word obtained from $b_2$ by inserting $x$ into $b_2$ and
$y$ is the element bumped out of $b_2$. Note that $b'_2 \in \B(\la_1 \epsilon_1)$.

\vskip 3mm

\emph{Case 1:}
$b_1 \otimes y$ is not a hook word.

We have $b_1 \otimes y \sim z \otimes b'_1$, where $b'_1$ is the word obtained from $b_1$ by inserting $y$ into $b_1$ and
$z$ is the element bumped out of $b_1$.
It follows that $z \otimes b'_1 \otimes b'_2 \sim b_1 \otimes b_2 \otimes x$.
Since $b_2 \otimes x$ is not a hook word, $b_1 \otimes b_2 \otimes x$ does not lie in $\B((\la_1+1)\epsilon_1 + \la_2 \epsilon_2)$ and hence it lies in $B(\la_1\epsilon_1+(\la_2+1)\epsilon_2)$ or in $B(\la_1 \epsilon_1 + \la_2 \epsilon_2 + \epsilon_3)$ in the direct sum decomposition of $\B(\la_1 \epsilon_1 + \la_2 \epsilon_2) \otimes \B$,
 by Lemma \ref{le_SSDT<-x_B}.
Since $z \otimes b'_1 \otimes b'_2 \sim b_1 \otimes b_2 \otimes x$, $z \otimes b'_1 \otimes b'_2$ lies in $B(\la_1\epsilon_1+(\la_2+1)\epsilon_2)$ or in $B(\la_1 \epsilon_1 + \la_2 \epsilon_2 + \epsilon_3)$ in the direct sum decomposition of $\B \otimes \B(\la_2 \epsilon_1) \otimes \B(\la_1 \epsilon_1)$.
By Lemma \ref{le_SSDT<-x_C} and Lemma \ref{le_SSDT<-x_D}, in both cases we conclude that $z \otimes b'_1 \otimes b'_2$ is a semistandard decomposition tableau.

\vskip 3mm

\emph{Case 2:}
$b_1 \otimes y$ is a hook word.

Since $b_2 \otimes x$ is not a hook word, we have $b_1 \otimes b_2 \otimes x$ lies in $B(\la_1 \epsilon_1 + \la_2 \epsilon_2 + \epsilon_3)$ or in $B(\la_1 \epsilon_1 + (\la_2 +1) \epsilon_2)$ in the decomposition of $\B(\la_1 \epsilon_1 + \la_2 \epsilon_2) \otimes \B$, by Lemma \ref{le_SSDT<-x_B}.
Then, from $b_1 \otimes y \otimes b'_2 \sim b_1 \otimes b_2 \otimes x$,
we have $b_1 \otimes y \otimes b'_2 \in B(\la_1 \epsilon_1 + \la_2 \epsilon_2 + \epsilon_3) \oplus B(\la_1 \epsilon_1 + (\la_2 +1) \epsilon_2)$ in the decomposition of $\B((\la_2+1)\epsilon_1) \otimes \B(\la_1 \epsilon_1)$.
By Lemma \ref{le_SSDT<-x_E}, we get $b_1 \otimes y \otimes b'_2 \in B(\la_1 \epsilon_1 + (\la_2 +1) \epsilon_2)$.
Since $\B((\la_2+1)\epsilon_1) \otimes \B(\la_1 \epsilon_1) \subseteq \B \otimes \B(\la_2\epsilon_1) \otimes \B(\la_1 \epsilon_1)$, we conclude that $b_1 \otimes y \otimes b'_2 \in \B(\la_1 \epsilon_1 + (\la_2 +1) \epsilon_2)$ by Lemma \ref{le_SSDT<-x_C}, as desired.
\end{proof}
\end{prop}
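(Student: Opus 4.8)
The plan is to establish (a) by following the insertion one row at a time and translating each elementary bump into the queer Knuth relation of Proposition~\ref{prop:Knuth relation}, and then to derive (b) by localizing the insertion to a single pair of consecutive rows and reading off the shape of the output from the decomposition Lemmas~\ref{le_SSDT<-x_A}--\ref{le_SSDT<-x_E}.

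For part (a) I would induct on the rows visited by the bump. Writing $\readw(T)=v_r\cdots v_1$ with $v_1$ the top row, I first insert $x$ into $v_1$. If $v_1x$ is a hook word there is nothing to do; otherwise the insertion returns a new first row $v_1'$ and a bumped letter $y$, and splitting $v_1$ at its minimum and applying Lemma~\ref{le_insertion_A} to the increasing part and Lemma~\ref{le_insertion_B} to the resulting insertion into the decreasing part gives $v_1\otimes x\sim y\otimes v_1'$. Because the tensor product rule for $\qn$-crystals depends only on the abstract crystal data, $\qn$-crystal equivalence is preserved when a fixed factor is tensored on the left; hence $v_r\cdots v_2\otimes(v_1\otimes x)\sim v_r\cdots v_2\otimes y\otimes v_1'$. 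The letter $y$ is now inserted into $v_2$, and iterating this step row by row yields $T\otimes x\sim T\leftarrow x$.

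For part (b) the crucial observation is that one insertion only alters a vertical bumping path, so the maximal-hook-subword condition of Definition~\ref{def_ssdt}(c)(ii) must be re-examined only across the consecutive rows the path crosses; using Lemma~\ref{le_multi_tensor} to localize, it suffices to treat the two-row case $b_1\otimes b_2\leftarrow x$ with $b_1\otimes b_2\in\B(\la_1\epsilon_1+\la_2\epsilon_2)$ and $\la_1>\la_2$ (the case $\la_2=0$ being immediate). By Lemma~\ref{le_SSDT<-x_A} we may view $b_1$ as the shorter bottom row and $b_2$ as the longer top row, into which $x$ is inserted first. If $b_2\otimes x$ is a hook word the top row is simply extended and the outcome lies in $\B((\la_1+1)\epsilon_1+\la_2\epsilon_2)$. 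If not, write $b_2\otimes x\sim y\otimes b_2'$; then either $b_1\otimes y$ is a hook word, so the bottom row is extended and the candidate output is $b_1\otimes y\otimes b_2'\in\B((\la_2+1)\epsilon_1)\otimes\B(\la_1\epsilon_1)$, or $b_1\otimes y$ is not a hook word, so a third row is opened and the candidate output is $z\otimes b_1'\otimes b_2'\in\B\otimes\B(\la_2\epsilon_1)\otimes\B(\la_1\epsilon_1)$.

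In each branch I would use part (a) together with Lemma~\ref{le_SSDT<-x_B} to determine which of the three summands of $\B(\la_1\epsilon_1+\la_2\epsilon_2)\otimes\B$ the word $b_1\otimes b_2\otimes x$, and hence its crystal-equivalent candidate output, belongs to, and then appeal to Lemmas~\ref{le_SSDT<-x_C}, \ref{le_SSDT<-x_D}, and \ref{le_SSDT<-x_E}. These lemmas assert that inside the ambient crystals $\B\otimes\B(\la_2\epsilon_1)\otimes\B(\la_1\epsilon_1)$ and $\B((\la_2+1)\epsilon_1)\otimes\B(\la_1\epsilon_1)$ each relevant isomorphism type occurs with multiplicity one and that the unique such component is literally the semistandard decomposition tableau crystal $\B(\nu)$; Lemma~\ref{le_SSDT<-x_E} in particular serves to exclude the spurious third-row component when the bottom row is extended. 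Consequently the algorithmically produced word, being $\qn$-crystal equivalent to $b_1\otimes b_2\otimes x$, must sit in that component and is therefore a genuine semistandard decomposition tableau of shape $\la+\epsilon_j$. I expect the main obstacle to be exactly this upgrade from ``crystal equivalent to an element of $\B(\nu)$'' to ``actually lies in $\B(\nu)$'': crystal equivalence alone only fixes the connected component up to isomorphism, and it is the multiplicity-one statements of Lemmas~\ref{le_SSDT<-x_C}--\ref{le_SSDT<-x_E} that force the output to be an honest tableau. Carefully justifying the two-row localization and checking that the bumped letters feed consistently into successive row pairs is the remaining, more routine, bookkeeping.
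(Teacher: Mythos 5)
Your proposal is correct and follows the paper's own proof essentially step for step: part (a) by combining Lemmas \ref{le_insertion_A} and \ref{le_insertion_B} row by row (with the observation that equivalence is preserved under tensoring by a fixed factor), and part (b) by reducing to the two-row case and using the multiplicity-one statements of Lemmas \ref{le_SSDT<-x_A}--\ref{le_SSDT<-x_E} to convert $\qn$-crystal equivalence into literal membership in the relevant $\B(\nu)$, which is exactly the paper's mechanism, including the use of Lemma \ref{le_SSDT<-x_E} to exclude the third-row component. The one place you are thinner than the paper is the branch where $b_2\otimes x$ is a hook word: Lemmas \ref{le_SSDT<-x_C}--\ref{le_SSDT<-x_E} say nothing about the ambient product $\B(\la_2\epsilon_1)\otimes\B((\la_1+1)\epsilon_1)$ where that candidate output lives (and pinning down the summand via Lemma \ref{le_SSDT<-x_B} would be circular), so the paper instead verifies this case directly: any hook subword of $b_1\otimes b_2\otimes x$ of length greater than $\la_1+1$ must contain $x$, and deleting $x$ would give a hook subword of $b_1\otimes b_2$ of length greater than $\la_1$, a contradiction.
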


Let $T$ and $T'$ be semistandard decomposition tableaux. We define
$T \leftarrow T'$ to be
$$(\cdots((T \leftarrow u_1) \leftarrow u_2) \cdots )\leftarrow u_N,$$
where $u_1 u_2 \cdots u_N$ is the reading word of $T'$.

\begin{corollary} \label{cor_T<-T'}
Let $T$ and $T'$ be semistandard decomposition tableaux of shifted shape $\lambda$ and $\mu$, respectively.
Then $T \leftarrow T'$ is a semistandard decomposition tableau and we have
$$T \otimes T' \sim T \leftarrow T'.$$
\end{corollary}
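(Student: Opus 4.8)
The plan is to reduce the statement to the single-letter insertion of Proposition~\ref{pro_T<-x} by feeding the letters of $T'$ into $T$ one at a time. Write $\readw(T') = u_1 u_2 \cdots u_N$ with $N = |\mu|$, so that under the reading-word embedding $T' = u_1 \otimes u_2 \otimes \cdots \otimes u_N$ inside $\B^{\otimes N}$, and, by definition, $T \leftarrow T' = (\cdots((T \leftarrow u_1) \leftarrow u_2) \cdots) \leftarrow u_N$. Set $T_0 = T$ and $T_k = T_{k-1} \leftarrow u_k$ for $1 \le k \le N$, so that $T_N = T \leftarrow T'$.

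The assertion that $T \leftarrow T'$ is a semistandard decomposition tableau is then immediate: Proposition~\ref{pro_T<-x}(b) shows by induction on $k$ that each $T_k$ is a semistandard decomposition tableau, hence so is $T_N$.

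For the equivalence I first isolate the fact that $\qn$-crystal equivalence is stable under tensoring a fixed element on the right, namely that $a \sim a'$ implies $a \otimes c \sim a' \otimes c$ for any $c$. Granting this, the proof finishes by a telescoping argument: since each $T_{k-1}$ is a semistandard decomposition tableau, Proposition~\ref{pro_T<-x}(a) gives $T_{k-1} \otimes u_k \sim T_{k-1} \leftarrow u_k = T_k$, and tensoring on the right by $u_{k+1} \otimes \cdots \otimes u_N$ yields $T_{k-1} \otimes u_k \otimes \cdots \otimes u_N \sim T_k \otimes u_{k+1} \otimes \cdots \otimes u_N$. Chaining these equivalences for $k = 1, \ldots, N$ and using transitivity of $\sim$ gives $T \otimes T' = T \otimes u_1 \otimes \cdots \otimes u_N \sim T_N = T \leftarrow T'$.

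The one point that needs genuine justification — and the main obstacle — is the stability lemma. Given a crystal isomorphism $\phi \colon C(a) \isoto C(a')$ with $\phi(a) = a'$, I would form $\phi \otimes \id \colon C(a) \otimes C(c) \to C(a') \otimes C(c)$. Because the tensor product rules \eqref{eq1:tensor product} and \eqref{eq2:tensor product} are expressed solely through $\wt$, the $\eps_i, \vphi_i$, and the Kashiwara operators $\tei, \tfi, \teone, \tfone$ — all of which $\phi$ preserves — the map $\phi \otimes \id$ is again a $\qn$-crystal isomorphism. Since $C(a)$ and $C(c)$ are connected components, hence full subcrystals, $C(a) \otimes C(c)$ is a union of connected components of the ambient tensor product, so the connected component of $a \otimes c$ computed inside $C(a) \otimes C(c)$ coincides with $C(a \otimes c)$, and likewise for $a' \otimes c$. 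Thus $\phi \otimes \id$ restricts to an isomorphism $C(a \otimes c) \isoto C(a' \otimes c)$ sending $a \otimes c$ to $a' \otimes c$, which is exactly the relation $a \otimes c \sim a' \otimes c$.
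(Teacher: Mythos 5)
Your proof is correct and takes essentially the same route as the paper's: both reduce the statement to the single-letter insertion of Proposition~\ref{pro_T<-x}, obtaining the tableau claim by iterating part (b) and the equivalence by chaining $T_{k-1}\otimes u_k \sim T_{k-1}\leftarrow u_k$ through the remaining letters of $T'$. The only difference is that you explicitly isolate and prove the stability of $\qn$-crystal equivalence under tensoring with a fixed element on the right, a fact the paper uses implicitly in its telescoping chain; your justification of it (via $\phi\otimes\id$ and the observation that $C(a)\otimes C(c)$ is a union of connected components) is sound.
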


\begin{proof}
Applying Proposition \ref{pro_T<-x} (b) repeatedly, we conclude that $T \leftarrow T'$ is a semistandard decomposition tableau.
Let $u_1 u_2 \cdots u_N$ be the reading word of $T'$.
Then we have
\begin{eqnarray}
\nonumber T \otimes T' &=& (\cdots((T \otimes u_1) \otimes u_2) \cdots) \otimes u_N \allowdisplaybreaks \\
\nonumber &\sim& (\cdots((T \leftarrow u_1) \otimes u_2) \cdots) \otimes u_N \allowdisplaybreaks\\
\nonumber &\sim& (\cdots((T \leftarrow u_1) \leftarrow u_2) \cdots) \otimes u_N \allowdisplaybreaks\\
\nonumber && \qquad \qquad \cdots \allowdisplaybreaks \\
\nonumber &\sim& (\cdots((T \leftarrow u_1) \leftarrow u_2) \cdots) \leftarrow u_N.\allowdisplaybreaks \\
\nonumber &=& T \leftarrow T'\allowdisplaybreaks.
\end{eqnarray}
\end{proof}

\bigskip

We now give an algorithm of decomposing the tensor product of
$\qn$-crystals using the insertion scheme.
\begin{theorem}
We have the following decomposition of tensor product of $\qn$-crystals.
$$\B(\la) \otimes \B(\mu) \simeq \soplus_{\stackrel{T \in \B(\la) \ ; }{T \leftarrow L^{\mu} = L^{\nu} \ \text{for some} \ \nu \in \Lambda^+}} \B({\rm sh}(T \leftarrow L^{\mu})).$$
\begin{proof}
To decompose $\B(\la) \otimes \B(\mu)$ into a disjoint union of
connected $\qn$-crystals, it is enough to find all the lowest
weight vectors. Let $T \otimes T' \in \B(\la) \otimes \B(\mu)$ be
a lowest weight vector. By Lemma \ref{le_lowest weight vector}, we
know $T' = L^{\mu}$. By Corollary \ref{cor_T<-T'}, $T \otimes
L^{\mu}$ is lowest weight vector if and only if $T \leftarrow
L^{\mu}$ is a lowest weight vector, hence we get the desired
result.
\end{proof}
\end{theorem}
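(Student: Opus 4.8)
The plan is to reduce the decomposition to an enumeration of the lowest weight vectors of $\B(\la) \otimes \B(\mu)$ and then to read off the shape of each connected component through the insertion scheme. Since the category $\Oint$ is semisimple (Proposition \ref{cor:Vtens}), the $\qn$-crystal $\B(\la) \otimes \B(\mu)$ decomposes into a disjoint union of connected components, each isomorphic to some $\B(\nu)$ with $\nu \in \Lambda^+$ (Theorem \ref{th_crystal}, Theorem \ref{th_SSDT_crystal}(c)). As every $\B(\nu)$ has a \emph{unique} lowest weight vector $L^\nu$ (Theorem \ref{th_SSDT_crystal}(b)), the connected components are in bijection with the lowest weight vectors of the tensor product; so it suffices to list those vectors and to identify the shape attached to each.

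First I would pin down the second tensor factor. Writing a typical element as $T \otimes T'$ and viewing it inside $\B^{\otimes(|\la|+|\mu|)}$ via the reading-word embeddings, I would peel off the letters of the reading word of $T$ one at a time and apply Lemma \ref{le_lowest weight vector} repeatedly (equivalently, invoke Corollary \ref{cor_lowest}); this shows that whenever $T \otimes T'$ is a lowest weight vector, its tail $T'$ is a lowest weight vector of $\B(\mu)$. Because $L^\mu$ is the unique lowest weight vector of $\B(\mu)$, this forces $T' = L^\mu$, so every lowest weight vector of $\B(\la) \otimes \B(\mu)$ has the form $T \otimes L^\mu$ with $T \in \B(\la)$.

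Next I would transport the lowest-weight condition across the insertion map. By Corollary \ref{cor_T<-T'}, $T \leftarrow L^\mu$ is again a semistandard decomposition tableau and $T \otimes L^\mu \sim T \leftarrow L^\mu$, so the two elements lie in isomorphic connected components under an isomorphism matching them; in particular $T \otimes L^\mu$ is a lowest weight vector if and only if $T \leftarrow L^\mu$ is one. Since the only lowest weight vector of $\B(\sh(T \leftarrow L^\mu))$ is $L^{\sh(T \leftarrow L^\mu)}$ (Theorem \ref{th_SSDT_crystal}(b)), this happens exactly when $T \leftarrow L^\mu = L^\nu$ for $\nu = \sh(T \leftarrow L^\mu)$. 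For each such $T$ the component $C(T \otimes L^\mu) \simeq C(T \leftarrow L^\mu)$ is then isomorphic to $\B(\sh(T \leftarrow L^\mu))$; distinct admissible $T$'s yield distinct lowest weight vectors and hence distinct components, so summing over all of them gives precisely the stated decomposition.

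I expect the genuine difficulty of the result to lie not in this assembly but in the equivalence $T \otimes L^\mu \sim T \leftarrow L^\mu$ furnished by Corollary \ref{cor_T<-T'}, which itself rests on the queer Knuth relation and the row-by-row insertion lemmas of this section. Within the present argument, the one step I would treat carefully is the iterated application of Lemma \ref{le_lowest weight vector}: it is phrased for a single letter $a \in \B$ tensored with a tail, so I would apply it inductively along the reading word of $T$ rather than to the pair $T \otimes T'$ in one stroke, to be sure that the conclusion $T' = L^\mu$ is legitimate.
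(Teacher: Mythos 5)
Your proposal is correct and follows essentially the same route as the paper's own proof: reduce to enumerating lowest weight vectors, use Lemma \ref{le_lowest weight vector} (applied along the reading word of $T$) to force $T' = L^{\mu}$, and then use Corollary \ref{cor_T<-T'} to transfer the lowest-weight condition to the insertion $T \leftarrow L^{\mu}$. The extra details you supply—the bijection between connected components and lowest weight vectors via uniqueness in Theorem \ref{th_SSDT_crystal}(b), and the inductive peeling needed to apply the one-letter lemma—are exactly the steps the paper leaves implicit.
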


\vskip 3mm

\begin{example} \label{ex_decomposition_2}
Let $n=3$, $\la=2\epsilon_1$ and $\mu=3\epsilon_1+\epsilon_2$.
We have
$$\{T \in \B(2 \epsilon_1) \,;\,T \leftarrow L^{3\epsilon_1 + \epsilon_2} \ \text{is a lowest weight vector} \}
=\{\young(12), \young(23), \young(32), \young(33)\}.$$
Indeed, we obtain
\begin{align}
\young(12) \leftarrow L^{3 \epsilon_1+ \epsilon_2}
&=(((\young(12) \leftarrow 2) \leftarrow 3) \leftarrow 3) \leftarrow 3
=\Big(\Big(\young(22,:1) \leftarrow 3\Big) \leftarrow 3\Big) \leftarrow 3 \nonumber \\
&=\Big(\young(223,:1) \leftarrow 3\Big) \leftarrow 3
=\young(323,:12)  \leftarrow 3   \nonumber \\
&=\young(333,:22,::1) , \nonumber
\end{align}

and similarly we have
\begin{align}
&\young(23) \leftarrow L^{3 \epsilon_1+ \epsilon_2} = L^{4 \epsilon_1 + 2 \epsilon_2}, &&
\young(32) \leftarrow L^{3 \epsilon_1+ \epsilon_2} = L^{4 \epsilon_1 + 2 \epsilon_2}, \nonumber \\
&\young(33) \leftarrow L^{3 \epsilon_1+ \epsilon_2} = L^{5 \epsilon_1 +  \epsilon_2}. \nonumber
\end{align}
For the other vectors in $\B(2\epsilon_1)$, we have
\begin{align}
&\young(11) \leftarrow L^{3 \epsilon_1+ \epsilon_2} = \young(3323,:11),
&&\young(21) \leftarrow L^{3 \epsilon_1+ \epsilon_2} = \young(3323,:21), \nonumber \\
&\young(31) \leftarrow L^{3 \epsilon_1+ \epsilon_2} = \young(3323,:12),
&&\young(22) \leftarrow L^{3 \epsilon_1+ \epsilon_2} = \young(3323,:22), \nonumber \\
&\young(13) \leftarrow L^{3 \epsilon_1+ \epsilon_2} = \young(3333,:12). \nonumber
\end{align}

Hence we conclude
$$\B(2 \epsilon_1) \otimes \B(3 \epsilon_1 + \epsilon_2)
\simeq \B(3\epsilon_1 + 2 \epsilon_2 + \epsilon_1) \oplus \B(4 \epsilon_1+2\epsilon_2)^{\oplus 2} \oplus \B(5 \epsilon_1+ \epsilon_2). $$
\end{example}
\vskip 3mm

\section{The shifted Littlewood-Richardson tableaux}
In this section, we will present two sets of shifted tableaux which
parameterize the connected components in the tensor products of
$\qn$-crystals $\B^{\otimes N}$ and $\B(\la) \otimes \B(\mu)$,
respectively.

Let $\la$ and $\mu$ be  strict partitions with $\mu \subseteq \la$.
A filling of the skew shifted shape $\la / \mu$ is called a \emph{standard shifted tableau of shape $\la / \mu$} if

(a) the entries in each row are strictly increasing from left to right,

(b) the entries in each column are strictly increasing from top to bottom,

(c) it contains each of the letters $1,2,\ldots, |\la / \mu |$ exactly once.

We denote by $\mathcal{ST}(\la / \mu)$ the set of standard shifted tableaux of shape $\la / \mu$.

\subsection{Decomposition of $\B^{\otimes N}$}
There exists a well-known bijection between the set of words with
entries $\{1,2,\ldots,n\}$ and the set of pairs $(P,Q)$, where $P$
is a semistandard Young tableau and $Q$ is a standard Young tableau
of the same shape as $P$. This is called the {\em
Robinson-Schensted-Knuth correspondence}. It can be understood as a
decomposition of the $\gl(n)$-crystal $\B^{\otimes N}$ into a
disjoint union of connected components (see, for example,
\cite{KK01}). Using the insertion scheme presented in the above
section, we can get an analogous decomposition of the
$\q(n)$-crystal $\B^{\otimes N}$.

\begin{definition} \label{def_recording tableaux}
Let $u=u_1 \cdots u_N \in \B^{\otimes N}$.
\bna
\item The {\em insertion tableau $P(u)$ of $u$} is the semistandard decomposition tableau given by
$$P(u)=(\cdots((u_1 \leftarrow u_2) \leftarrow u_3) \cdots) \leftarrow u_N.$$
\item The \emph{recording tableau $Q(u)$ of $u$} is the filling of the shifted shape $\sh(P(u))$ constructed as follows:
\bni
\item the filling $Q(u)$ consists of the cells that are created by the insertion $(\cdots((u_1 \leftarrow u_2) \leftarrow u_3) \cdots) \leftarrow u_N$,
\item if $u_i$ is inserted into $(\cdots((u_1 \leftarrow u_2) \leftarrow u_3) \cdots) \leftarrow u_{i-1}$ to create
a cell at the position $c_i$, then we fill the cell at $c_i$ with the entry $i$.
\ee
\ee
\end{definition}
Note that for any $u \in \B^{\otimes N}$, $Q(u)$ is a standard shifted tableau with the same shape as $P(u)$.

\begin{example}
(a) Let $n=3$ and $u= 2321$. Since
\begin{align}
\nonumber ((2 \leftarrow 3)\leftarrow 2)\leftarrow 1 = (\young(23) \leftarrow 2) \leftarrow 1
= \young(32,:2) \leftarrow 1 = \young(321,:2) \, ,
\end{align}
we have
\begin{align}
\nonumber P(u)=\young(321,:2) \, ,  \qquad Q(u)=\young(124,:3) \, .
\end{align}
(b) Let $n=4$, $\la=(6,4,2,1)$, and $u=1223333444444$. Then we have
\begin{align}
\nonumber P(u)=\young(444444,:3333,::22,:::1) \, ,  \qquad Q(u) = \young(12478\thirteen,:359\twelve,::6\ten,:::\eleven) \, .
\end{align}
\end{example}

Thus  we get  a map
$$\Psi : \B^{\otimes N} \to \bigsqcup_{\stackrel{\la \in \Lambda^+}{\text{with} \ |\la|=N}} \B(\la) \times \mathcal{ST}(\la)$$
given by
$$u=u_1\cdots u_N \mapsto \big( P(u), Q(u) \big).$$

\vskip 3mm

The inverse algorithm $\Psi^{-1}$ of $\Psi$ is given as follows: For
$P \in \B(\la)$ and $Q \in \mathcal{ST}(\la)$, let $Q_k$ be the
standard shifted tableau obtained from $Q$ by removing the cells
with entries $k+1, k+2, \ldots, N$ and let $x_k$ be the letter in
$P$ at the cell in the same position as $Q_k - Q_{k-1}$ for each
$k$.

\bna
\item If $x_N$ lies in the first row of $P$, then set $u_N \seteq x_N$.
\item Suppose that $x_N$ lies in the $\ell$-th row of $P$($\ell \ge 2$).
Let $v=y_1 \cdots y_{\la_{\ell-1}}$ be the reading word of
$(\ell-1$)-th row of $P$. Suppose that $$y_1 \geq \cdots \geq y_k <
\cdots < y_{\la_{\ell-1}}.$$ If $k=1$ or $x_N \geq y_1$, then $x_N
v$ is a hook word of length $\la_{\ell-1} +1$. Hence we have $k > 1$
and $x_N < y_1$. Let $y_i$ be the rightmost element in $y_1 \cdots
y_{k-1}$ which is strictly greater than $x_N$. Replace $y_i$ by
$x_N$. Let $y_j$ be the rightmost element in $y_k \cdots
y_{\la_{\ell-1}}$ which is less than or equal to $y_i$. Replace
$y_j$ by $y_i$. (Hence $y_j$ gets bumped out of $v$.)
\item Apply the same procedure to the $(\ell -2)$-th row of $P$ with $y_j$ as described in (a) and (b).
\item Repeat the same procedure row by row from bottom to top until an element, say $u_N$, gets bumped out of the first row.
\item Let $P_{N-1}$ be the filling of the array of shape $\sh(Q_{N-1})$ obtained from $P$ by applying (a), (b), (c), and (d).
    Repeat (a), (b), (c), and (d) with $T_{N-1}$ and $Q_{N-1}$ so that an element, say $u_{N-1}$, gets bumped out of $P_{N-1}$.
\item Repeat the whole procedure until we get $N$-many letters $u_N, \ldots, u_1$ to get a word $u=u_1 \cdots u_N \in \B^{\otimes N}$.
\ee

As a consequence, $\Psi$ is a bijection between $\B^{\otimes N}$  and $\displaystyle \bigsqcup_{\stackrel{\la \in \Lambda^+}{\text{with} \ |\la|=N}} \B(\la) \times \mathcal{ST}(\la)$.

\begin{lemma} \label{le_T_sim_T'}
Let $T \in \B(\la)$ and $T' \in \B(\mu)$ for some strict partitions $\la$ and $\mu$.
If $T \sim T'$, then
$\la=\mu$ and $T=T'$.
\begin{proof}
Let $T^{\la} = \te_{i_1}^{a_1} \tf_{i_1}^{b_1} \cdots \te_{i_r}^{a_r} \tf_{i_r}^{b_r} T$
for some $i_1, \ldots i_r \in \{1,\ldots, n-1, \ol{1} \}$, and $a_1,\ldots, a_r$, $b_1, \ldots , b_r$ $\in \Z_{\ge 0}$.
Since $T \sim T'$, $\te_{i_1}^{a_1} \tf_{i_1}^{b_1} \cdots \te_{i_r}^{a_r} \tf_{i_r}^{b_r} T'$ is a highest weight vector in $\B(\mu)$. It must be $T^{\mu}$, since $\B(\mu)$ has a unique highest weight vector.
Because $\wt(T) = \wt(T')$, we get $\la=\mu$. It follows that $T^{\la} = T^{\mu}$ and hence $T=T'$.
\end{proof}
\end{lemma}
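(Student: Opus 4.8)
The plan is to reduce the abstract relation $\sim$ to an honest isomorphism of the \emph{full} crystals and then exploit that each $\B(\la)$ has a \emph{unique} highest weight vector $T^\la$, as established in Theorem \ref{th_SSDT_crystal}. First I would unwind the definition of $\qn$-crystal equivalence: by Theorem \ref{th_SSDT_crystal}(c) the crystals $\B(\la)$ and $\B(\mu)$ are connected, so the connected components $C(T)$ and $C(T')$ are all of $\B(\la)$ and $\B(\mu)$ respectively. Hence $T \sim T'$ furnishes a $\qn$-crystal isomorphism $\phi \colon \B(\la) \isoto \B(\mu)$ with $\phi(T) = T'$, and $\phi$ commutes with every Kashiwara operator and preserves $\wt$.

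Next I would prove $\la = \mu$. Since $\B(\la)$ is connected, there is a word in the Kashiwara operators, which I would record in the form $T^\la = \te_{i_1}^{a_1} \tf_{i_1}^{b_1} \cdots \te_{i_r}^{a_r} \tf_{i_r}^{b_r}\, T$ with $i_1,\ldots,i_r \in \{1,\ldots,n-1,\ol 1\}$, carrying $T$ to the highest weight vector $T^\la$. Applying the identical word to $T' = \phi(T)$ and using that $\phi$ intertwines the operators shows that $\te_{i_1}^{a_1}\tf_{i_1}^{b_1}\cdots\,T'$ equals $\phi(T^\la)$, which is a highest weight vector of $\B(\mu)$ and therefore equals $T^\mu$ by uniqueness. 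Comparing weights finishes this step: $\phi$ preserves weight so $\wt(T) = \wt(T')$, and the fixed word shifts both by the same amount, giving $\wt(T^\la) = \wt(T^\mu)$, that is $\la = \mu$.

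Finally I would conclude $T = T'$. With $\la = \mu$ in hand we have $\phi(T^\la) = T^\mu = T^\la$, so $\phi$ is a crystal \emph{automorphism} of $\B(\la)$ fixing its unique highest weight vector. Connectedness then forces $\phi = \id$: every element is obtained from $T^\la$ by a sequence of Kashiwara operators, and $\phi$ both commutes with these and fixes $T^\la$, so it fixes everything. In particular $T' = \phi(T) = T$.

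The argument is short, and the only point I would handle with genuine care — rather than a routine calculation — is that $\phi$ must commute with the \emph{odd} operators $\teone,\tfone$ (and, through the $S_{w_i}$-conjugates of Section 1, with $\teibar,\tfibar$) as well as with the even $\tei,\tfi$. This is what guarantees that $\phi$ sends a $\qn$-highest weight vector to a $\qn$-highest weight vector and that the triviality-of-automorphism step is valid for the full $\qn$-crystal structure, not merely for the underlying $\gl(n)$-crystal; everything else is bookkeeping with weights and the uniqueness statement of Theorem \ref{th_SSDT_crystal}(b).
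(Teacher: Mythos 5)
Your proposal is correct and takes essentially the same route as the paper: use connectedness to write a word in the Kashiwara operators carrying $T$ to the unique highest weight vector $T^{\la}$, transport that word through the equivalence to obtain a highest weight vector of $\B(\mu)$, which must be $T^{\mu}$ by uniqueness (Theorem \ref{th_SSDT_crystal}), and compare weights to conclude $\la=\mu$. Your closing step (a $\qn$-crystal automorphism of the connected crystal $\B(\la)$ fixing $T^{\la}$ is the identity) is just a slightly more explicit packaging of the paper's terse ``hence $T=T'$'', which amounts to inverting the same word using the partial-bijectivity of the operators.
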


\vskip 3mm

\begin{lemma} \label{le_recording_tableaux}
 Let $u=u_1 \cdots u_N \in \B^{\otimes N}$ and let $i \in \{1, \ldots, n-1, \ol{1}\}$.
 If $\tf_i u \neq 0$, then $Q(\tf_i u ) = Q(u)$ and
 if $\te_i u \neq 0$, then $Q(\te_i u ) = Q(u)$.
\begin{proof}
Let $\tf_i u = u'_1 \cdots u'_N$ such that $u'_j =u_j$ for $ j \neq k$ and $i+1 = u'_k =\tf_i u_k$ for some $1 \le k \le N$.
It is enough to show that $\sh((\cdots((u_1 \leftarrow u_2)  \cdots) \leftarrow u_t)) = \sh((\cdots((u'_1 \leftarrow u'_2)  \cdots) \leftarrow u'_t))$ for all $1 \le t \le N$.

If $1 \le t < k$, our assertion is trivial.

If $ t \ge k $, then
\begin{eqnarray*}
(\cdots((u'_1 \leftarrow u'_2)  \cdots) \leftarrow u'_t) &\sim& u'_1 \otimes \cdots \otimes u'_t \\
&=& \tf_i(u_1 \otimes \cdots \otimes u_t ) \quad \text{by} \ \text{Lemma} \ \ref{le_multi_tensor} \\
&\sim& \tf_i (\cdots((u_1 \leftarrow u_2)  \cdots) \leftarrow u_t).
\end{eqnarray*}
By Lemma \ref{le_T_sim_T'}, we have
$$(\cdots((u'_1 \leftarrow u'_2)  \cdots) \leftarrow u'_t) =
\tf_i (\cdots((u_1 \leftarrow u_2)  \cdots) \leftarrow u_t).$$
Hence, by Theorem \ref{th_SSDT_crystal}, we have
\begin{eqnarray*}
\sh((\cdots((u'_1 \leftarrow u'_2)  \cdots) \leftarrow u'_t))
&=&\sh(\tf_i (\cdots((u_1 \leftarrow u_2)  \cdots) \leftarrow u_t)) \\
&=&\sh((\cdots((u_1 \leftarrow u_2)  \cdots) \leftarrow u_t)),
\end{eqnarray*}
as desired.

The proof for $\te_i u$ is similar.
\end{proof}
\end{lemma}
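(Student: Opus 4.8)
The plan is to show that the entire sequence of intermediate shapes produced by the successive insertions is unchanged when $u$ is replaced by $\tf_i u$. Since $Q(u)$ is, by Definition~\ref{def_recording tableaux}, the standard shifted tableau that records at each step $t$ the single box added in passing from $\sh\bigl((\cdots(u_1\leftarrow u_2)\cdots)\leftarrow u_{t-1}\bigr)$ to $\sh\bigl((\cdots(u_1\leftarrow u_2)\cdots)\leftarrow u_t\bigr)$, this reduces the whole lemma to the single claim
$$\sh\bigl((\cdots(u_1\leftarrow u_2)\cdots)\leftarrow u_t\bigr)=\sh\bigl((\cdots(u'_1\leftarrow u'_2)\cdots)\leftarrow u'_t\bigr)\qquad(1\le t\le N),$$
where $\tf_i u=u'_1\cdots u'_N$. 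First I would locate the unique factor that changes: by the tensor product rules \eqref{eq1:tensor product} and \eqref{eq2:tensor product}, the operator $\tf_i$ modifies exactly one tensor component, so there is a single index $k$ with $u'_j=u_j$ for $j\ne k$ and $u'_k=\tf_i u_k$. For $1\le t<k$ the first $t$ letters of $u$ and $u'$ coincide, so the two intermediate tableaux are literally identical and the shapes agree trivially.

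The substance is the range $t\ge k$, which I would handle by chaining together the tools already in place. By Corollary~\ref{cor_T<-T'} (equivalently Proposition~\ref{pro_T<-x}(a)) the partial insertion is crystal equivalent to the plain tensor product, $(\cdots(u'_1\leftarrow u'_2)\cdots)\leftarrow u'_t\sim u'_1\otimes\cdots\otimes u'_t$. Since the active position satisfies $k\le t$, Lemma~\ref{le_multi_tensor}(a), applied with $j=1$ and $m=t$, gives $u'_1\otimes\cdots\otimes u'_t=\tf_i(u_1\otimes\cdots\otimes u_t)$. Because $\sim$ is realized by a \emph{crystal} isomorphism — not a mere bijection — it commutes with the Kashiwara operators, so I can push $\tf_i$ through the equivalence of Corollary~\ref{cor_T<-T'} to obtain $\tf_i(u_1\otimes\cdots\otimes u_t)\sim\tf_i\bigl((\cdots(u_1\leftarrow u_2)\cdots)\leftarrow u_t\bigr)$. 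Combining these three equivalences yields
$$(\cdots(u'_1\leftarrow u'_2)\cdots)\leftarrow u'_t\ \sim\ \tf_i\bigl((\cdots(u_1\leftarrow u_2)\cdots)\leftarrow u_t\bigr).$$

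Both sides are semistandard decomposition tableaux: the left by Proposition~\ref{pro_T<-x}(b), and the right because $\B(\lambda)$ is closed under the Kashiwara operators by Theorem~\ref{th_SSDT_crystal}(a). Now Lemma~\ref{le_T_sim_T'} upgrades a crystal equivalence between two semistandard decomposition tableaux to an honest equality, hence in particular equality of shapes; and since $\tf_i$ preserves shape on $\B(\lambda)$ (again Theorem~\ref{th_SSDT_crystal}(a)), the right-hand shape equals $\sh\bigl((\cdots(u_1\leftarrow u_2)\cdots)\leftarrow u_t\bigr)$. This establishes the displayed equality of shapes for all $t$, and therefore $Q(\tf_i u)=Q(u)$. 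The statement for $\te_i$ follows by the identical chain of equivalences with $\te_i$ in place of $\tf_i$, or more slickly by applying the $\tf_i$-case to $\te_i u$ and using $\tf_i\te_i u=u$. The one point that requires genuine care, rather than routine verification, is the commutation of $\sim$ with $\tf_i$ together with the uniform treatment of the odd operator $i=\ol{1}$ (where $u_k=1$, $u'_k=2$); everything else is an application of results already proved.
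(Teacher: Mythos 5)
Your proposal is correct and follows essentially the same route as the paper's own proof: the same reduction to equality of intermediate shapes, the same chain of equivalences (partial insertion $\sim$ tensor product via Proposition \ref{pro_T<-x}(a)/Corollary \ref{cor_T<-T'}, then Lemma \ref{le_multi_tensor} to localize $\tf_i$, then Lemma \ref{le_T_sim_T'} to upgrade equivalence to equality), and the same appeal to Theorem \ref{th_SSDT_crystal} for shape preservation. The only cosmetic differences are that you make explicit the commutation of $\sim$ with the Kashiwara operators and offer the alternative $\tf_i\te_i u = u$ trick for the $\te_i$ case, neither of which changes the argument.
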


\vskip 3mm

For a standard shifted tableau $Q$ with $|\sh(Q)|=N$,
we define
$$\B_Q = \{ u_1 \cdots u_N \in \B^{\otimes N} \, ; \, Q(u)=Q\}.$$
Now we prove one of the main results of this section.

\begin{theorem} We have the following decomposition of the $\qn$-crystal $\B^{\otimes N}$ into a disjoint union of connected components:
$$ \B^{\otimes N} = \soplus_{\stackrel{\la \in \Lambda^+ }{ \text{with} \ |\la|=N}} \Big(\soplus_{Q \in \mathcal{ST}(\la)} \B_Q \Big),$$
where $\B_Q$ is isomorphic to $\B(\la)$ with $sh(Q)=\la$.
\begin{proof}
As a set, we have
$$ \B^{\otimes N} = \bigsqcup_{\stackrel{\la \in \Lambda^+ }{ \text{with} \ |\la|=N}} \Big(\bigsqcup_{Q \in \mathcal{ST}(\la)} \B_Q \Big).$$
Let $Q$ be a standard shifted tableau of shape $\la \in \La^+$.
By Lemma \ref{le_recording_tableaux}, $\B_Q \cup \{0\}$ is closed under the Kashiwara operators.
Since $\Psi$ is a bijection, we have
$$\B_Q = \{ \Psi^{-1}(T,Q) \,;\, T \in \B(\la)\}.$$
It follows that the map $P : \B_Q \to \B(\la)$ given by $u \mapsto P(u)$ is a bijection.

For any word $u \in \B^{\otimes N}$ and $i \in \{1, \ldots, n-1, \ol{1}\}$,
we know
$$P(\tf_i u) \sim \tf_i u \sim \tf_i P(u).$$
By Lemma \ref{le_T_sim_T'}, we get $P(\tf_i u) = \tf_i P(u)$.
Similarly we have $P(\te_i u) = \te_i P(u)$.
Hence $P : \B_Q \to \B(\la)$ is a $\qn$-crystal isomorphism.
\end{proof}
\end{theorem}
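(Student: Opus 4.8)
The plan is to upgrade the set-theoretic bijection $\Psi$ into a statement about the $\qn$-crystal structure, working fiber by fiber over the recording tableau. First I would observe that every word $u\in\B^{\otimes N}$ has a well-defined recording tableau $Q(u)$, so the sets $\B_Q$ automatically partition $\B^{\otimes N}$ as $Q$ ranges over the standard shifted tableaux of shapes $\la\in\Lambda^+$ with $|\la|=N$; this gives the disjoint-union decomposition on the level of sets. Since $\Psi\colon u\mapsto(P(u),Q(u))$ is a bijection onto $\bigsqcup_{\la}\B(\la)\times\mathcal{ST}(\la)$, restricting to a fixed $Q\in\mathcal{ST}(\la)$ identifies $\B_Q$ with the slice $\B(\la)\times\{Q\}$, so the insertion map $P\colon\B_Q\to\B(\la)$, $u\mapsto P(u)$, is a bijection.

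Next I would check that each $\B_Q\cup\{0\}$ is stable under the Kashiwara operators, making $\B_Q$ a genuine $\qn$-subcrystal. This is exactly Lemma \ref{le_recording_tableaux}: applying any $\te_i$ or $\tf_i$ (for $i\in\{1,\ldots,n-1,\ol 1\}$) to $u\in\B_Q$ either yields $0$ or leaves the recording tableau unchanged, hence keeps us inside $\B_Q$.

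The heart of the argument is to show that $P\colon\B_Q\to\B(\la)$ intertwines the Kashiwara operators. Here I would use the $\qn$-crystal equivalence $u\sim P(u)$, obtained by iterating Proposition \ref{pro_T<-x}(a) (equivalently Corollary \ref{cor_T<-T'}). For any $\tf_i$ this yields the chain $P(\tf_i u)\sim\tf_i u\sim\tf_i P(u)$, in which both $P(\tf_i u)$ and $\tf_i P(u)$ are semistandard decomposition tableaux in $\B(\la)$. By Lemma \ref{le_T_sim_T'}, crystal-equivalent semistandard decomposition tableaux coincide, forcing $P(\tf_i u)=\tf_i P(u)$; the same reasoning gives $P(\te_i u)=\te_i P(u)$. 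Combined with the bijectivity above, this exhibits $P$ as an isomorphism of $\qn$-crystals $\B_Q\isoto\B(\la)$, and in particular $\B_Q$ is connected.

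The main obstacle is this final intertwining step, and in particular the rigidity provided by Lemma \ref{le_T_sim_T'}: without the uniqueness of a semistandard decomposition tableau in its crystal-equivalence class, crystal equivalence alone would not pin down $P(\tf_i u)$ inside $\B(\la)$. That rigidity rests on the uniqueness of the highest weight vector of $\B(\la)$ from Theorem \ref{th_SSDT_crystal}(b), while the equivalence $u\sim P(u)$ ultimately depends on the queer Knuth relations of Proposition \ref{prop:Knuth relation}; once these inputs are granted, the verification is routine.
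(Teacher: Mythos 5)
Your proposal is correct and follows essentially the same route as the paper's proof: the set-theoretic partition via recording tableaux, closure of $\B_Q$ under the Kashiwara operators via Lemma \ref{le_recording_tableaux}, bijectivity of $P$ via $\Psi$, and the intertwining argument $P(\tf_i u)\sim \tf_i u\sim \tf_i P(u)$ pinned down by the rigidity Lemma \ref{le_T_sim_T'}. The only difference is expository: you make explicit the inputs (Proposition \ref{pro_T<-x}, Corollary \ref{cor_T<-T'}, and Theorem \ref{th_SSDT_crystal}) that the paper invokes implicitly.
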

%Therefore we obtain $\B_Q \cap \B_{Q'} = \emptyset$ if $Q \neq Q'$, as desired.

As an immediate consequence, we obtain the following corollary.
\begin{corollary}
For a strict partition $\la$ with $|\la|=N$, let $f^{\la}$ be the number of standard shifted tableaux of shape $\la$.
Then we have
$$\B^{\otimes N} \simeq \soplus_{\stackrel{\la \in \Lambda^{+}}{\text{with} \ |\la|=N}} \B(\la)^{\oplus f^{\la}}.$$
\end{corollary}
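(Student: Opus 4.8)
The plan is to read the corollary off directly from the preceding theorem, which already furnishes the complete decomposition of $\B^{\otimes N}$ into connected components together with their isomorphism types. First I would recall that the theorem gives a partition of $\B^{\otimes N}$ into the subsets $\B_Q$ indexed by pairs $(\la, Q)$ with $\la \in \La^{+}$, $|\la| = N$, and $Q \in \mathcal{ST}(\la)$, and that each $\B_Q$ is isomorphic, as a $\qn$-crystal, to $\B(\la)$ with $\la = \sh(Q)$. These subsets are pairwise disjoint, being the fibers of the recording-tableau map $u \mapsto Q(u)$, and each is a single connected component; hence they are precisely the summands occurring in the decomposition of $\B^{\otimes N}$.

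The only remaining step is bookkeeping. For a fixed strict partition $\la$ with $|\la| = N$, the number of summands $\B_Q$ with $\sh(Q) = \la$ equals the number of standard shifted tableaux of shape $\la$, which is $f^{\la}$ by definition. Collecting the $f^{\la}$ copies of $\B(\la)$ that arise from the distinct $Q \in \mathcal{ST}(\la)$, and then summing over all strict partitions $\la$ of $N$, yields the $\qn$-crystal isomorphism $\B^{\otimes N} \simeq \soplus_{\la} \B(\la)^{\oplus f^{\la}}$, as claimed.

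There is no genuine obstacle here: the substantive content—that each $\B_Q \cup \{0\}$ is closed under the Kashiwara operators, that $\B_Q$ is connected, and that the insertion map $P \colon \B_Q \to \B(\la)$ is a $\qn$-crystal isomorphism—has already been established in the theorem via Lemma~\ref{le_recording_tableaux}, Lemma~\ref{le_T_sim_T'}, and the bijectivity of $\Psi$. The corollary is thus purely a reindexing of that statement, replacing the inner direct sum over $\mathcal{ST}(\la)$ by its cardinality $f^{\la}$.
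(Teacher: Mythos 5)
Your proposal is correct and matches the paper exactly: the paper offers no separate proof, stating only that the corollary is an immediate consequence of the preceding theorem, and your argument is precisely that bookkeeping step—collecting the $f^{\la}$ components $\B_Q$ with $\sh(Q)=\la$, each isomorphic to $\B(\la)$, and summing over strict partitions of $N$.
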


\vskip 3mm

\subsection{The shifted Littlewood-Richardson tableaux}
In this section, we will define the notion of {\it shifted
Littlewood-Richardson tableaux of skew shape}, which  parameterize
the connected components of the tensor product $\B(\la) \otimes
\B(\mu)$.

For semistandard decomposition tableaux $T$ and $T'$, we define $T \rightarrow T'$  by
$$u_1 \leftarrow (u_2 \leftarrow \cdots \leftarrow (u_{N-1}\leftarrow (u_N \leftarrow T')) \cdots ),$$
where $u_1 u_2 \cdots u_N$ is the reading word of $T$.
\begin{definition}
Let $T \in \B(\la)$ and $T' \in \B(\mu)$ for some strict partitions $\la, \mu$
and let $\sh(T \rightarrow T') = \nu$.
The \emph{recording tableau $Q(T \rightarrow T')$ of the insertion $T \rightarrow T'$} is the filling of the skew shifted shape $\nu / \mu$ constructed as follows:
\bni
\item the filling $Q(T \rightarrow T')$ consists of the cells that are created by the insertion $T \rightarrow T'$,
\item if $u_{N-k+2} \leftarrow (\cdots  \leftarrow (u_{N-1}\leftarrow (u_N \leftarrow T')) \cdots )$ is inserted into $u_{N-k+1}$  to create a cell at the position $c_k$,
 then we fill the cell at $c_k$ with the entry $k$.
\ee
\end{definition}

\vskip 3mm

\begin{example}
(a) Let $T= \young(12)$ and $T'=\young(333,:2)$.
Then we have
\begin{align}
\nonumber T \rightarrow T'&=\young(1) \leftarrow \Big(\young(2)  \leftarrow \young(333,:2)\Big)
=\young(1) \leftarrow \young(333,:22)
=\young(333,:22,::1) \, .
\end{align}
Hence the recording tableau $Q(T \rightarrow T')$ is given by
$$\young(\hfill\hfill\hfill,:\hfill1,::2) \, .$$

(b) Let $T =\young(312,:2)$  and $T' = \young(322,:1)$.
Then we have
\begin{align*}
T \rightarrow T' = \young(3322,:221,::1) \, , & \qquad Q(T \rightarrow T') = \young(\hfill\hfill\hfill3,:\hfill14,::2) \, .
\end{align*}
\end{example}

\begin{remark}
\bna
\item For any insertion $T \rightarrow T'$, the recording tableau $Q(T \rightarrow T')$ is a standard shifted tableau of shape $\nu / \mu$, where $\sh(T') =\mu$ and $\sh(T \rightarrow T')=\nu$.
\item Since
\begin{align*}
       & u_1 \leftarrow (u_2 \leftarrow \cdots \leftarrow (u_{N-1}\leftarrow (u_N \leftarrow T')) \cdots ) \\
\sim & u_1 \otimes (u_2 \leftarrow \cdots \leftarrow (u_{N-1}\leftarrow (u_N \leftarrow T')) \cdots ) \\
     & \qquad \cdots  \\
\sim & u_1 \otimes (u_2 \otimes  \cdots \otimes (u_{N-1}\otimes (u_N \leftarrow T')) \cdots ) \\
\sim & u_1 \otimes (u_2 \otimes  \cdots \otimes (u_{N-1}\otimes (u_N \otimes T')) \cdots ),
\end{align*}
we have
$T \rightarrow T' \sim T \otimes T' \sim T \leftarrow T', $
and  $T \rightarrow T' = T \leftarrow T'$ by Lemma \ref{le_T_sim_T'}.
\ee
\end{remark}
\begin{lemma} \label{le_row}
Let $T \in \B(\la)$ and $T' \in \B(\mu)$ for some strict partitions
$\la , \mu$. If $T \rightarrow L^{\mu} = L^{\nu}$ for some strict
partition $\nu$, then the reading word of $T$ is given by
\begin{equation}
(n-r_{|\la|}+1) \otimes (n-r_{|\la|-1} +1) \otimes \cdots \otimes (n-r_1+1),
\end{equation}
where $r_k$ denotes the row of the entry $k$ in $Q(T \rightarrow T')$.
\begin{proof}
Let $|\la|=N$ and let $u_1 \cdots u_N$ be the reading word of $T$.
Since $$L^{\nu} = T \rightarrow L^{\mu} \, \sim u_1 \otimes \cdots
\otimes u_N \otimes L^{\mu},$$ we see that $u_k \otimes \cdots
\otimes u_N \otimes L^{\mu}$ is a lowest weight vector for all $k =
1,2, \ldots ,N$, by Corollary \ref{cor_lowest}. Then we have
$$u_k \leftarrow (u_{k+1} \leftarrow \cdots ( u_{N-1} \leftarrow ( u_N \leftarrow L^{\mu})) \cdots)
= L^{\mu+\epsilon_{n-u_N+1} + \cdots +\epsilon_{n-u_k +1}}$$
for all $k =1,2, \ldots, N$, by Lemma \ref{le_T_sim_T'}.
Hence the cell created by inserting $(u_{k+1} \leftarrow \cdots (u_N \leftarrow L^{\mu})\cdots)$
into $u_k$ lies at the $(n-u_k+1)$-th row of $L^{\mu+\epsilon_{n-u_N+1} + \cdots + \epsilon_{n-u_k +1}}$.
Thus we have $u_k = n- r_{N-k+1} +1$ for all $k = 1,2, \ldots, N$, as desired.
\end{proof}
\end{lemma}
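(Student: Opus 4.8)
The plan is to recover the reading word $u_1\cdots u_N$ of $T$ (with $N=|\la|$) by tracking the shapes of the partial insertions that build $T\rightarrow L^\mu$. For $1\le k\le N$ set $P_k\seteq u_k\leftarrow(u_{k+1}\leftarrow\cdots\leftarrow(u_N\leftarrow L^\mu)\cdots)$, so that $P_1=T\rightarrow L^\mu=L^\nu$. First I would use the chain of crystal equivalences established in the remark preceding the lemma (iterating Proposition~\ref{pro_T<-x}(a)) to get $P_k\sim u_k\otimes\cdots\otimes u_N\otimes L^\mu$. Since $P_1=L^\nu$ is a lowest weight vector, so is $u_1\otimes\cdots\otimes u_N\otimes L^\mu$; the vectors $u_k\otimes\cdots\otimes u_N\otimes L^\mu$ are precisely its tails, so Corollary~\ref{cor_lowest} shows each of them is again a lowest weight vector.

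Next I would identify each $P_k$ explicitly. By Corollary~\ref{cor_T<-T'} the object $P_k$ is a semistandard decomposition tableau, say of shape $\kappa_k$, and it is crystal equivalent to the lowest weight vector $u_k\otimes\cdots\otimes u_N\otimes L^\mu$; hence $P_k$ is itself a lowest weight vector of $\B(\kappa_k)$. As $\B(\kappa_k)$ has the unique lowest weight vector $L^{\kappa_k}$ (Theorem~\ref{th_SSDT_crystal}(b)) and crystal-equivalent semistandard decomposition tableaux coincide (Lemma~\ref{le_T_sim_T'}), I conclude $P_k=L^{\kappa_k}$. The shape is then pinned down by weights: $\wt(P_k)=\epsilon_{u_k}+\cdots+\epsilon_{u_N}+w_0\mu=w_0\kappa_k$, and applying the involution $w_0$ together with $w_0\epsilon_i=\epsilon_{n-i+1}$ yields $\kappa_k=\mu+\epsilon_{n-u_N+1}+\cdots+\epsilon_{n-u_k+1}$.

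Finally I would read off the rows. Comparing $\kappa_k$ with $\kappa_{k+1}=\mu+\epsilon_{n-u_N+1}+\cdots+\epsilon_{n-u_{k+1}+1}$, the single cell created when $u_k$ is inserted into $P_{k+1}=L^{\kappa_{k+1}}$ lies in row $n-u_k+1$. By the definition of the recording tableau this cell carries the entry $N-k+1$, whence $r_{N-k+1}=n-u_k+1$, i.e.\ $u_k=n-r_{N-k+1}+1$; re-indexing by $j=N-k+1$ gives the asserted word $(n-r_{|\la|}+1)\otimes\cdots\otimes(n-r_1+1)$. I would also note that the recording tableau is a crystal invariant of the second factor (cf.\ Lemma~\ref{le_recording_tableaux}), so $Q(T\rightarrow T')=Q(T\rightarrow L^\mu)$ and the statement is well posed for any $T'\in\B(\mu)$.

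The main obstacle is the identification $P_k=L^{\kappa_k}$, which needs two facts at once: that every partial insertion is a genuine semistandard decomposition tableau (so the intermediate objects are legitimate and Lemma~\ref{le_T_sim_T'} applies) and that a lowest weight semistandard decomposition tableau of a fixed shape is unique. Both are supplied by Corollary~\ref{cor_T<-T'}, Theorem~\ref{th_SSDT_crystal}(b) and Lemma~\ref{le_T_sim_T'}; once they are in place, the weight bookkeeping that produces the row index $n-u_k+1$ is routine, though one must handle $w_0$ carefully so as not to reverse the row labels.
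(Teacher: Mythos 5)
Your proposal is correct and follows essentially the same route as the paper's proof: establish $P_k\sim u_k\otimes\cdots\otimes u_N\otimes L^\mu$, use Corollary~\ref{cor_lowest} to see the tails are lowest weight vectors, identify $P_k=L^{\mu+\epsilon_{n-u_N+1}+\cdots+\epsilon_{n-u_k+1}}$ via Lemma~\ref{le_T_sim_T'} and uniqueness of lowest weight vectors, and read off $u_k=n-r_{N-k+1}+1$ from the row of the cell added at each step. Your version merely makes explicit some steps the paper leaves implicit (the weight bookkeeping and the well-posedness of $Q(T\rightarrow T')$), which is fine.
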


For a standard shifted tableau $Q$ of shape $\nu / \mu$, let
$$\B(\la, \mu)_Q = \set{T \otimes T' \in \B(\la) \otimes \B(\mu)}{Q(T \rightarrow T')=Q}.$$

\begin{prop} \label{pro_recording tableaux2}
For strict partitions $\la, \mu, \nu$ with $\mu \subseteq \nu$, let
$Q$ be a standard shifted tableau of shape $\nu / \mu$. \bna
\item The set $\B(\la,\mu)_Q \cup \{0\}$ is closed under the action of the Kashiwara operators.
\item The $\q(n)$-crystal $\B(\la,\mu)_Q$ is isomorphic to $\B(\nu)$.
\ee
\begin{proof}
(a) Let $T \in \B(\la), \ T' \in \B(\mu)$ and let $u_1 \cdots u_N$
be the reading word of $T$. Fix an $i \in \{1,2,\ldots, n-1,
\ol{1}\}$. We will only show that $\B(\la,\mu)_Q \cup \{0\}$ is
closed under $\tf_i$ since the proof for $\te_i$ is similar.

 Suppose that $\tf_i (T \otimes T') = \tf_i T \otimes T'$.
 Then there exists $k \in \{1,2,\ldots, N \}$ such that $$\tf_i(T \otimes T') = u_1 \otimes \cdots \otimes \tf_i u_k \otimes \cdots \otimes u_N \otimes T'.$$
 Observe that for all $j \le k$,
\begin{align}
& u_j \leftarrow ( u_{j+1} \leftarrow \cdots (\tf_i u_k \leftarrow (u_{k+1} \leftarrow \cdots (u_N \leftarrow T')\cdots ))\cdots ) &\nonumber \\
\sim & u_j \otimes u_{j+1} \otimes \cdots \otimes \tf_i u_k \otimes u_{k+1} \otimes \cdots \otimes u_N \otimes T'& \nonumber \\
=& \tf_i(u_j \otimes u_{j+1} \otimes \cdots \otimes  u_k \otimes u_{k+1} \otimes \cdots \otimes u_N \otimes T') && \text{by Lemma} \
\ref{le_multi_tensor} \nonumber \\
\sim & \tf_i(u_j \leftarrow ( u_{j+1} \leftarrow \cdots (u_k \leftarrow (u_{k-1} \leftarrow \cdots (u_N \leftarrow T')\cdots ))\cdots )). \nonumber
\end{align}
Thus, by Lemma \ref{le_T_sim_T'}, we have
\begin{align}
&\tf_i(u_j \leftarrow ( u_{j+1} \leftarrow \cdots (u_k \leftarrow (u_{k+1} \leftarrow \cdots (u_N \leftarrow T')\cdots ))\cdots )) \nonumber \\
=&u_j \leftarrow ( u_{j+1} \leftarrow \cdots (\tf_i u_k \leftarrow (u_{k+1} \leftarrow \cdots (u_N \leftarrow T')\cdots ))\cdots ). \nonumber
\end{align}
It follows that
\begin{align}
&\sh(u_j \leftarrow ( u_{j+1} \leftarrow \cdots (u_k \leftarrow (u_{k+1} \leftarrow \cdots (u_N \leftarrow T')\cdots ))\cdots )) \nonumber \\
=&\sh(u_j \leftarrow ( u_{j+1} \leftarrow \cdots (\tf_i u_k \leftarrow (u_{k+1} \leftarrow \cdots (u_N \leftarrow T')\cdots ))\cdots )) \nonumber
\end{align}
for all $j \leq k$.
Hence we get $Q(T \rightarrow T') = Q(\tf_i T \rightarrow T')$.

By a similar argument, one can show that if $\tf_i(T \otimes T') = T \otimes \tf_i T'$ then $Q(T \rightarrow T') = Q( T \rightarrow \tf_i T')$.
\vskip 3mm
(b) If $T \otimes T'$ is a lowest weight vector in the $\q(n)$-crystal $\B(\la,\mu)_Q$, then we have
$T' = L^{\mu}$ and $T \rightarrow T' =L^{\nu}$.
By Lemma \ref{le_row}, we get
$$T=(n-r_{|\la|}+1) \otimes (n-r_{|\la|-1} +1) \otimes \cdots \otimes (n-r_1+1),$$
where $r_k$ denotes the row of the entry $k$ in $Q(T \rightarrow T')$.
So the $\q(n)$-crystal $\B(\la, \mu)_Q$ in $\B^{\otimes (|\la|+|\mu|)}$
has a unique lowest weight vector $T \otimes T'$ which is $\q(n)$-crystal equivalent to $L^{\nu}$.
It follows that $\B(\la, \mu)_Q \simeq \B(\nu)$, as desired.
\end{proof}
\end{prop}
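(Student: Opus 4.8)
The plan is to establish (a) by proving that the recording tableau $Q(T\rightarrow T')$ is invariant under each Kashiwara operator, and then to deduce (b) by pinning down the lowest weight vectors of $\B(\la,\mu)_Q$. Throughout I write $u_1\cdots u_N$ for $\readw(T)$ and regard $T\otimes T'$ as an element of $\B^{\otimes(|\la|+|\mu|)}$ via the embeddings $T\mapsto\readw(T)$ and $T'\mapsto\readw(T')$.

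For (a), fix $i\in\{1,\ldots,n-1,\ol 1\}$; I treat $\tf_i$, the case of $\te_i$ being identical. By the tensor product rule there are two possibilities. First suppose $\tf_i(T\otimes T')=\tf_i T\otimes T'$, so that $\tf_i$ acts on some factor $u_k$ of $T$. The crux is that the shape of every intermediate insertion $u_j\leftarrow(u_{j+1}\leftarrow\cdots\leftarrow(u_N\leftarrow T')\cdots)$ is unchanged when $u_k$ is replaced by $\tf_i u_k$. For $j\le k$ I would invoke Lemma~\ref{le_multi_tensor} to move $\tf_i$ onto the factor $u_k$ inside the truncated tensor $u_j\otimes\cdots\otimes u_N\otimes T'$, use the chain of crystal equivalences of Corollary~\ref{cor_T<-T'} to identify this tensor with the iterated insertion, and then apply the rigidity Lemma~\ref{le_T_sim_T'} to upgrade the resulting equivalence to an honest equality of semistandard decomposition tableaux,
$$\tf_i\bigl(u_j\leftarrow(\cdots\leftarrow(u_N\leftarrow T'))\bigr)=u_j\leftarrow(\cdots\leftarrow(\tf_i u_k\leftarrow\cdots\leftarrow(u_N\leftarrow T'))).$$
Since $\tf_i$ preserves the shape of a tableau (Theorem~\ref{th_SSDT_crystal}), the two sides have the same shape; for $j>k$ the intermediate tableaux do not involve $u_k$ and are literally unchanged. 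Comparing the cells created at each step then gives $Q(T\rightarrow T')=Q(\tf_i T\rightarrow T')$. The second possibility, $\tf_i(T\otimes T')=T\otimes\tf_i T'$, is handled by the same mechanism applied to the $T'$-factor, yielding $Q(T\rightarrow T')=Q(T\rightarrow\tf_i T')$. In either case the recording tableau remains $Q$, so $\B(\la,\mu)_Q\cup\{0\}$ is stable under the Kashiwara operators.

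For (b), part (a) shows that $\B(\la,\mu)_Q$ is a union of connected components of $\B(\la)\otimes\B(\mu)$, so it suffices to determine its lowest weight vectors. If $T\otimes T'$ is one, then Lemma~\ref{le_lowest weight vector} forces $T'=L^\mu$, while $T\rightarrow T'\sim T\otimes T'$ is a lowest weight vector of weight $w_0\nu$; since $L^\nu$ is the unique lowest weight vector of $\B(\nu)$ by Theorem~\ref{th_SSDT_crystal}, this gives $T\rightarrow L^\mu=L^\nu$. Now Lemma~\ref{le_row} expresses $\readw(T)$ entirely in terms of the row positions $r_k$ of the entries $1,\ldots,|\la|$ in $Q$, so $T$—and hence $T\otimes T'$—is uniquely determined by $Q$. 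Thus $\B(\la,\mu)_Q$ carries a single lowest weight vector, which is $\q(n)$-crystal equivalent to $L^\nu$; being a union of connected components with a unique lowest weight vector, it is connected, and its component $C(T\otimes L^\mu)\simeq C(L^\nu)=\B(\nu)$ gives the claim.

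The main obstacle is the equality of intermediate shapes in (a): the crystal equivalences of Corollary~\ref{cor_T<-T'} only guarantee isomorphic connected components, not identical tableaux, so the decisive point is to feed these equivalences into the rigidity statement of Lemma~\ref{le_T_sim_T'} to promote ``$\sim$'' to ``$=$''. Once equality of the intermediate insertions is secured, shape-invariance under the Kashiwara operators is automatic, and the recording tableau is seen to be a genuine crystal invariant, which is exactly what both parts require.
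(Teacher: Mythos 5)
Your proposal is correct and follows essentially the same route as the paper's own proof: for (a) you combine Lemma~\ref{le_multi_tensor}, the crystal equivalences underlying Corollary~\ref{cor_T<-T'}, and the rigidity Lemma~\ref{le_T_sim_T'} to promote equivalence of the intermediate insertions to equality (hence equality of shapes and of recording tableaux), and for (b) you use Lemma~\ref{le_lowest weight vector}, Lemma~\ref{le_row}, and uniqueness of lowest weight vectors exactly as the paper does. The only differences are cosmetic (you spell out the trivial case $j>k$ and the connectedness count of components), so there is nothing to correct.
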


Now we define the notion of {\em shifted Littlewood-Richardson
tableaux}.
\begin{definition}
Let $\la, \mu, \nu$ be strict partitions with $\mu \subseteq \nu$.
We define
\begin{equation*}
\begin{array}{rl}
  \mathcal{\widetilde{LR}}_{\la,\mu}^\nu  \seteq & \{Q \in \mathcal{ST}(\nu / \mu)  ;
  (n-r_{|\la|}+1) % \otimes (n-r_{|\la|-1} +1)
  \otimes \cdots \otimes (n-r_1+1) \in \B(\la), \\
  &\text{where $r_k$ denotes the row of the entry $k$ in $Q$}\}.
\end{array}
\end{equation*}
A tableau $Q$ in $\mathcal{\widetilde{LR}}_{\la,\mu}^\nu$ is called a {\em shifted Littlewood-Richardson tableau of shape $\nu / \mu$ and type $\la$}.
\end{definition}
\vskip 3mm

Let $T_1 \otimes T_2 \in \B(\la) \otimes \B(\mu)$ and $\sh(T_1 \rightarrow T_2) = \nu$ for some strict partition $\nu$.
Since $T_1 \otimes T_2 \sim T_1 \rightarrow T_2$,
there exists a $\q(n)$-crystal isomorphism
$$\psi : C(T_1 \otimes T_2) \to \B(\nu)$$
sending $T_1 \otimes T_2$ to $T_1 \rightarrow T_2$.
Let $T \otimes L^{\mu} = \psi^{-1}(L^{\nu})$ for some $T \in \B(\la)$.
We obtain $T \rightarrow L^{\mu}=L^{\nu}$.
By Proposition \ref{pro_recording tableaux2} (a), we have $Q(T_1 \rightarrow T_2) = Q(T \rightarrow L^{\mu})$.
Note that $Q(T \rightarrow L^{\mu}) \in \mathcal{\widetilde{LR}}_{\la,\mu}^\nu$, by Lemma \ref{le_row}.
Hence we have $Q(T_1 \rightarrow T_2) \in \mathcal{\widetilde{LR}}_{\la,\mu}^\nu$.

Conversely, if $Q \in \mathcal{\widetilde{LR}}_{\la,\mu}^\nu$,
then there exists an element $T_1 \otimes T_2 \in \B(\la) \otimes
\B(\mu)$ such that $Q(T_1 \rightarrow T_2) = Q$. For example, if
$T_1=(n-r_{|\la|}+1) \otimes (n-r_{|\la|-1} +1) \otimes \cdots
\otimes (n-r_1+1)$ and $T_2=L^{\mu}$, then we have $Q(T_1
\rightarrow T_2) = Q$, where $r_k$ denotes the row of the entry
$k$ in $Q$.

To summarize, we obtain another main result of this section.

\begin{theorem} Let $\la, \mu$ be strict partitions. Then there exists a
decomposition of the tensor product of $\q(n)$-crystals as follows:
$$\B(\la) \otimes \B(\mu) = \soplus_{\stackrel{\nu \in \Lambda^+ }{\text{with} \ \mu \subseteq \nu}}  \soplus_{Q \in \mathcal{\widetilde{LR}}_{\la,\mu}^\nu} \B(\la, \mu)_Q.$$
Consequently, we have $| \mathcal{\widetilde{LR}}_{\la,\mu}^{\nu} |
=f_{\la, \mu}^{\nu}$.
\end{theorem}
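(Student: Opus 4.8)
The plan is to refine the set-theoretic decomposition of $\B(\la) \otimes \B(\mu)$ according to the recording tableau of the insertion $T \rightarrow T'$, and then to promote this set partition to a decomposition of $\qn$-crystals by invoking Proposition~\ref{pro_recording tableaux2}.

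First I would observe that every element $T \otimes T' \in \B(\la) \otimes \B(\mu)$ carries the invariant $Q(T \rightarrow T')$, a standard shifted tableau of shape $\nu/\mu$ with $\nu = \sh(T \rightarrow T')$ and $\mu = \sh(T')$. Grouping elements by this invariant gives, as a set,
$$\B(\la) \otimes \B(\mu) = \bigsqcup_{\substack{\nu \in \La^+ \\ \mu \subseteq \nu}} \ \bigsqcup_{Q \in \mathcal{ST}(\nu/\mu)} \B(\la,\mu)_Q.$$
The next step is to determine which blocks are nonempty, and here I would prove that $\B(\la,\mu)_Q \neq \emptyset$ exactly when $Q \in \mathcal{\widetilde{LR}}_{\la,\mu}^\nu$. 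The ``if'' direction is the construction recorded just before the theorem: for $Q \in \mathcal{\widetilde{LR}}_{\la,\mu}^\nu$ the vector $T_1 \otimes L^\mu$ with $T_1 = (n-r_{|\la|}+1) \otimes \cdots \otimes (n-r_1+1)$ lies in $\B(\la,\mu)_Q$. For the ``only if'' direction, if $\B(\la,\mu)_Q$ is nonempty then Proposition~\ref{pro_recording tableaux2}(b) shows it is isomorphic to $\B(\nu)$, hence contains a lowest weight vector $T'' \otimes L^\mu$ with $T'' \rightarrow L^\mu = L^\nu$; Lemma~\ref{le_row} then forces the reading word of $T''$ to equal $(n-r_{|\la|}+1) \otimes \cdots \otimes (n-r_1+1)$, which belongs to $\B(\la)$, i.e.\ $Q \in \mathcal{\widetilde{LR}}_{\la,\mu}^\nu$. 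Consequently the inner disjoint union may be restricted to $\mathcal{\widetilde{LR}}_{\la,\mu}^\nu$.

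To upgrade this set partition to a $\qn$-crystal decomposition, I would invoke Proposition~\ref{pro_recording tableaux2}(a): since each $\B(\la,\mu)_Q \cup \{0\}$ is stable under every Kashiwara operator, the disjoint union is a decomposition into subcrystals, which is the first displayed assertion. For the ``Consequently'' clause, Proposition~\ref{pro_recording tableaux2}(b) identifies each block $\B(\la,\mu)_Q$ (with $Q\in\mathcal{\widetilde{LR}}_{\la,\mu}^\nu$) with $\B(\nu)$, so the number of connected components of $\B(\la)\otimes\B(\mu)$ isomorphic to $\B(\nu)$ equals $|\mathcal{\widetilde{LR}}_{\la,\mu}^\nu|$. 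Comparing with the shifted Littlewood--Richardson rule $\B(\la)\otimes\B(\mu)\simeq\soplus_{\nu}\B(\nu)^{\oplus f_{\la,\mu}^\nu}$ established earlier, and using that distinct $\B(\nu)$ $(\nu\in\La^+)$ are pairwise non-isomorphic so that multiplicities in a decomposition into connected components are well defined, I would conclude $|\mathcal{\widetilde{LR}}_{\la,\mu}^\nu| = f_{\la,\mu}^\nu$.

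Most of the argument is bookkeeping, since the substantive content has already been isolated in Proposition~\ref{pro_recording tableaux2} and Lemma~\ref{le_row}. The one point needing genuine care --- and the main (if modest) obstacle --- is the passage from the set-theoretic partition to a direct sum of crystals: one must know that no Kashiwara operator carries an element of a block $\B(\la,\mu)_Q$ outside that block, i.e.\ that the recording tableau is a crystal invariant. That is precisely the assertion of Proposition~\ref{pro_recording tableaux2}(a), so once it is in hand the theorem follows by assembling the pieces above.
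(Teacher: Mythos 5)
Your proposal is correct and takes essentially the same approach as the paper: both partition $\B(\la)\otimes\B(\mu)$ by the recording tableau $Q(T\rightarrow T')$, characterize the nonempty blocks as exactly those with $Q\in\mathcal{\widetilde{LR}}_{\la,\mu}^\nu$ using Lemma~\ref{le_row} and the explicit lowest weight vector $T\otimes L^\mu$, and then invoke Proposition~\ref{pro_recording tableaux2} to get crystal-closure of each block and its identification with $\B(\nu)$, comparing multiplicities with the earlier shifted Littlewood--Richardson rule. The only cosmetic difference is that the paper reaches the ``only if'' direction through the connected component $C(T_1\otimes T_2)$ and part (a) of the proposition, whereas you apply part (b) directly to a nonempty block; these are the same argument organized slightly differently.
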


\vskip 3mm

\begin{example}
\bna
\item For $n=3$, let $\la=2 \epsilon_1$, $\mu= 3 \epsilon_1 + \epsilon_2$,
 $\nu_1 = 3 \epsilon_1 + 2 \epsilon_2 + \epsilon_3$, $\nu_2 = 4 \epsilon_1 + 2 \epsilon_2$ and $\nu_3 = 5 \epsilon_1 + \epsilon_2$.
For each $i=1,2,3$ the set $\mathcal{\widetilde{LR}}_{\la,\mu}^{\nu_i}$ is the same as the set
$\mathcal{ST}(\nu_i / \mu)$,
since any word of length 2 is a semistandard decomposition tableau of shifted shape $\la$.
So we have
 \begin{align*}
 & \ \mathcal{\widetilde{LR}}_{\la,\mu}^{\nu_1}=\Bigg\{\young(\hfill\hfill\hfill,:\hfill1,::2)
 \Bigg\},\\
& \
\mathcal{\widetilde{LR}}_{\la,\mu}^{\nu_2}=\Bigg\{\young(\hfill\hfill\hfill1,:\hfill2),
\young(\hfill\hfill\hfill2,:\hfill1)\Bigg\}, \\
&  \
\mathcal{\widetilde{LR}}_{\la,\mu}^{\nu_3}=\Bigg\{\young(\hfill\hfill\hfill12,:\hfill)\Bigg\}.
 \end{align*}
Hence we obtain the same decomposition as in Example \ref{ex_decomposition}, and Example \ref{ex_decomposition_2}.

\item For $n=3$, let $\la=\mu=3 \epsilon_1 + \epsilon_2$ and $\nu= 4\epsilon_1 + 3\epsilon_2 + \epsilon_3$.
Then we have
$$\mathcal{ST}(\nu / \mu) = \Bigg\{ \young(\hfill\hfill\hfill1,:\hfill23,::4)\, ,
  \ \young(\hfill\hfill\hfill1,:\hfill24,::3)\, ,
  \ \young(\hfill\hfill\hfill2,:\hfill13,::4)\, ,
  \ \young(\hfill\hfill\hfill2,:\hfill14,::3)\, ,
  \ \young(\hfill\hfill\hfill3,:\hfill14,::2)
  \Bigg\}. $$
One can easily check that
$$\mathcal{\widetilde{LR}}_{\la,\mu}^{\nu} = \Bigg\{ \young(\hfill\hfill\hfill1,:\hfill23,::4)\, , \ \young(\hfill\hfill\hfill3,:\hfill14,::2) \Bigg\},$$
and hence $f^{\nu}_{\la, \mu}=2$.
From similar calculations, we obtain the following decomposition of tensor product of $\q(n)$-crystals:
\begin{align*}
 \B(3 \epsilon_1 + \epsilon_2) \otimes \B(3 \epsilon_1 + \epsilon_2) \simeq & \
 \B(6\epsilon_1 + 2\epsilon_2)
\oplus \B(5\epsilon_1 + 3\epsilon_2)^{\oplus 2} \\
 \oplus &  \, \B(5\epsilon_1 + 2\epsilon_2 + \epsilon_3)^{\oplus 2}
\oplus \B(4\epsilon_1 + 3\epsilon_2 + \epsilon_3)^{\oplus 2}.
\end{align*}
\ee
\end{example}
\vskip 3mm


\begin{thebibliography}{99}

\bibitem{BKK}
G. Benkart, S.-J. Kang, M. Kashiwara,
\newblock {\em Crystal bases for the quantum superalgebra
$U_q(\mathfrak{gl}(m,n))$},
\newblock   J. Amer. Math. Soc. {\bf 13}  (2000), 293--331.

\bibitem{Fom} S. Fomin, \emph{Schur operators and Knuth correspondences}, J. Combin. Theory, Ser. A. \textbf{72} (1995) 277--292.

\bibitem{GJKKK} D. Grantcharov, J. H. Jung, S.-J. Kang, M. Kashiwara, M. Kim,
\newblock {\it Quantum queer superalgebra and crystal bases},
\newblock  Proc. Japan Acad. Ser. A Math. Sci. {\bf 86} (2010), 177-182.

\bibitem{GJKKK2} D. Grantcharov, J. H. Jung, S.-J. Kang, M. Kashiwara, M. Kim,
\newblock {\it  Crystal bases for the quantum queer superalgebra}, \newblock    preprint.


\bibitem{GJKK} D. Grantcharov, J. H. Jung, S.-J. Kang, M. Kim,
\newblock {\it Highest weight modules over quantum queer superalgebra
$U_q({\mathfrak q}(n))$}, \newblock Commun. Math. Phys. {\bf 296}
(2010), 827--860.

\bibitem{Hai} M. D. Haiman, \emph{On mixed insertion, symmetry, and shifted Young tableaux}, J. Combin. Theory Ser. A. \textbf{50} (1989) 196--225.

\bibitem{HK2002}
J. Hong, S.-J. Kang, \emph{Introduction to Quantum Groups and
Crystal Bases}, Graduate Studies in Mathematics \textbf{42},
American Mathematical Society, 2002.


%\bibitem{JMMO}
%M.~Jimbo, K.~C. Misra, T.~Miwa, M.~Okado, \emph{Combinatorics of
%representations of ${U}_q(\widehat{\mathfrak{sl}}(n))$ at $q=0$},
%Commun. Math. Phys. \textbf{136} (1991), 543--566.

%\bibitem{K} V. Kac,
%\newblock{\it Lie superalgebras}, Adv. Math. {\bf 26} (1977), 8--96.


\bibitem{Ka2003}
S.-J. Kang, \emph{Crystal bases for quantum affine algebras and
combinatorics of Young walls}, Proc. Lond. Math. Soc. {\bf (3) 86}
(2003), 29--69.


%\bibitem{KMN1}
%S.-J. Kang, M.~Kashiwara, K.~C. Misra, T.~Miwa, T.~Nakashima,
%A.~Nakayashiki, \emph{Affine crystals and vertex models}, Int. J.
%Mod. Phys. A. \textbf{Suppl. 1A} (1992), 449--484.


%\bibitem{KMN2}
%\bysame, \emph{Perfect crystals of quantum affine {L}ie algebras},
%Duke Math. J. \textbf{68} (1992), 499--607.


\bibitem{KK01}
S.-J. Kang, J.-H. Kwon,
\newblock{\it Tensor Product of Crystal Bases for $U_q(\gl(m,n))$- modules}, Commun. Math. Phys.  {\bf 224} (2001), 705-732.


\bibitem{KK08}
S.-J. Kang, J.-H. Kwon,
\newblock{\it Fock space representations
of quantum affine algebras and generalized Lascoux-Leclerc-Thibon
algorithm}, J. Korean Math. Soc. {\bf 45} (2008), 1135-1202.

\bibitem{Kas90} M. Kashiwara,
\newblock {\it Crystalizing the $q$-analogue
of universal enveloping algebras},
\newblock Commun. Math. Phys.  {\bf 133} (1990) 249--260.

\bibitem{Kas91}
%M. Kashiwara,
\bysame,
\newblock {\it On crystal bases of the $q$-analogue of
universal enveloping algebras},
\newblock Duke Math. J. {\bf 63} (1991) 465--516.

\bibitem{Kas93}
%M. Kashiwara,
\bysame,
\newblock {\it Crystal base and Littelmann's refined Demazure character formula},
\newblock Duke Math. J. \textbf{71} (1993) 839--858.

%\bibitem{Kas94}
%\bysame, \emph{Crystal bases of modified quantized enveloping
%algebra}, Duke Math. J. \textbf{73} (1994), 383--413.

\bibitem{KN} M. Kashiwara, T. Nakashima,
\newblock {\it Crystal graphs for representations of the $q$-analogue of classical Lie algebras},
\newblock  J. Algebra {\bf 165} (1994), 295--345.

%\bibitem{KS97}
%M. Kashiwara and Y. Saito, {\em Geometric construction of crystal
%bases}, Duke Math. J. {\bf 89} (1997), 9--36.

%\bibitem{Li1}
%P.~Littelmann, \emph{A Littlewood-Richardson rule for
%symmetrizable Kac-Moody algebras}, Invent. Math. \textbf{116}
%(1994), 329--346.

%\bibitem{Li2}
%P.~Littelmann,  \emph{Paths and root operators in representation
%theory}, Ann. of Math. (2) \textbf{142} (1995), 499--525.

%\bibitem{Lus90}
%G.~Lusztig, \emph{Canonical bases arising from quantized
%enveloping algebras}, J. Amer. Math. Soc. \textbf{3} (1990), 447--498.

%\bibitem{Lus91}
%G.~Lusztig, \emph{Quivers, perverse sheaves, and quantized
%enveloping algebras}, J. Amer. Math. Soc. \textbf{4} (1991), 365--421.


\bibitem{MM}
K.~Misra, T.~Miwa, \emph{Crystal base for the basic representation
of ${U}_q(\widehat{\mathfrak{sl}}(n))$}, Commun. Math. Phys.
\textbf{134} (1990), 79--88.


%\bibitem{Naka02}
%H.~Nakajima,\emph{Quiver varieties and finite-dimensional
%representations of quantum affine algebras}, J. Amer. Math. Soc.
%\textbf{14} (2001), 145--238.


\bibitem{N93}
T.~Nakashima, \newblock{\it Crystal base and a generalization of the
Littlewood-Richardson rule for the classical Lie algebras}, Commun.
Math. Phys. {\bf 154} (1993), 215--243.


\bibitem{Ol} G. Olshanki, \newblock {\it Quantized universal enveloping
superalgebra of type $Q$ and a super-extension of the Hecke
algebra}, \newblock Lett. Math. Phys. {\bf 24} (1992), 93--102.

%\bibitem{PS} I. Penkov, V. Serganova,
%\newblock {\it Characters of finite-dimensional irreducible $q(n)$-modules},
%\newblock Lett. Math. Phys. {\bf 40} (1997) 147--158.

\bibitem{Sag} B. Sagan, \emph{Shifted tableaux, Schur Q-functions, and a conjecture of R.P. Stanley}, J. Combin. Theory Ser.
A. \textbf{45} (1987), 62--103.

%\bibitem{Sai}
%Y. Saito, \emph{Crystal bases and quiver varieties}, Math. Ann.
%\textbf{324} (2002), 675--688.

\bibitem{Ser} A. Sergeev, \newblock {\it The tensor algebra of the tautological representation as a module over the Lie superalgebras $\mathfrak{gl}(n,m)$ and $Q(n)$}, \newblock Mat. Sb. {\bf 123} (1984) 422--430 (in Russian).

\bibitem{Serra} L. Serrano, \newblock {\it The shifted plactic monoid},  \newblock Math. Z. {\bf 266} (2010) 363--392.

\bibitem{St} J. Stembridge, \newblock {\it  Shifted tableaux and the projective representations of symmetric groups}, \newblock Adv. Math {\bf 74} (1989), 87--134.

\bibitem{Wor} D. R.  Worley, \emph{A theory of shifted Young tableaux}, Ph.D. thesis, MIT, 1984. Available at http://hdl.handle.net/1721.1/15599

\end{thebibliography}
\end{document}